\newtheorem{theorem}{Theorem}[section]
\newtheorem{proposition}[theorem]{Proposition}
\newtheorem{lemma}[theorem]{Lemma}
\newtheorem{corollary}[theorem]{Corollary}
\theoremstyle{definition}
\newtheorem{definition}[theorem]{Definition}
\theoremstyle{remark}
\numberwithin{equation}{section}
\newcommand{\spt}{\operatorname{spt}}
\newcommand{\diam}{\operatorname{diam}}
\newcommand{\Tan}{\operatorname{Tan}}
\renewcommand{\d}{\, \mathrm{d}}
\newcommand{\muae}{$\mu$-a.e.\ }
\newcommand{\R}{\mathbb R}
\newcommand{\B}{C}
\newcommand{\U}{\mathcal U}
\newcommand{\tmu}{\tilde \mu}
\renewcommand{\L}{\mathcal L^1}
\renewcommand{\H}{\mathcal H^1}
\newcommand{\N}{\mathbb N}
\newcommand{\Z}{\mathbb Z}
\newcommand{\zo}{\{0,1\}^{\mathbb Z}}
\newcommand{\no}{\{0,1\}^{\mathbb N}}
\renewcommand{\S}{\mathcal S}
\newcommand{\T}{\mathcal T}
\newcommand{\TT}{\mathbb R/2\mathbb Z}
\renewcommand{\th}{\Theta^1}
\newcommand{\dKR}{F}
\newcommand{\dGHs}{d_{*}}
\newcommand{\Mloc}{\mathcal M_{\mathrm{loc}}}
\newcommand{\M}{\mathbb M_*}
\begin{document}

\begin{abstract}
        A metric measure space $(X,\mu)$ is \emph{1-regular} if
\[0< \lim_{r\to 0} \frac{\mu(B(x,r))}{r}<\infty\]
for $\mu$-a.e $x\in X$.
We give a complete geometric characterisation of the rectifiable and purely unrectifiable part of a 1-regular measure in terms of its tangent spaces.

A special instance of a 1-regular metric measure space is a \emph{1-uniform} space $(Y,\nu)$, which satisfies $\nu(B(y,r))=r$ for all $y\in Y$ and $r>0$.
We prove that there are exactly three 1-uniform metric measure spaces.
\end{abstract}

\thanks{This work was supported by the Academy of Finland grant number 308510 and the European Union's Horizon 2020 research and innovation programme grant number 948021.
}

\date{}
\author{David Bate}
	\email{david.bate@warwick.ac.uk}
\address{Zeeman Building, University of Warwick, Coventry CV4 7AL}

\title{On 1-regular and 1-uniform metric measure spaces}
\maketitle

\section{Introduction}

Geometric measure theory began with the pioneering work of Besicovitch.
One of the most striking, yet simplest to state results is the following \cite{besicovitchII}.
If $\mu$ is a Borel measure on the plane for which
\begin{equation}
        \label{BP}
\Theta^1(\mu,x) := \lim_{r\to 0} \frac{\mu(B(x,r))}{2r}
\end{equation}
exists and is positive and finite $\mu$-a.e, then $\mu$ is 1-rectifiable.
That is, $\mu$ almost all of $\R^2$ can be covered by countably many Lipschitz images of $\R$.
Besicovitch's argument was extended to 1-dimensional measures in higher dimension Euclidean spaces by \textcite{zbMATH03061001}, but question about higher dimensional measures, known as the \emph{Besicovitch problem} remained open for many years.
Partial results due to \textcite{zbMATH03177817} and \textcite{zbMATH03430988} proved the $n$-rectifiability of $\mu=\mathcal{H}^n|_E$ whenever
\begin{equation}
        \label{BPs}
\Theta^n(\mu,x) := \lim_{r\to 0} \frac{\mu(B(x,r))}{(2r)^n}
\end{equation}
equals 1 almost everywhere.
Further, \textcite{zbMATH03232045} proved that if $\Theta^s(\mu,x)$ exists and is positive and finite $\mu$-a.e, then $s$ must be an integer.
It was not until 50 years after Besicovitch's work that \textcite{preiss} positively resolved the Besicovitch problem.

In this paper we consider the Besicovitch problem in arbitrary metric spaces.
To set the stage, we recall another result of \textcite{zbMATH03232045} which states that, if $\Theta^s(\mu,x)$ is positive and finite in any metric space, then $s\geq 1$.
Further, for $s>1$, if $\R$ is considered with the metric $d(x,y)=\sqrt[s]{|x-y|}$, then 1-dimensional Lebesgue measure satisfies $\mathcal L^1(B(x,r))=2r^s$ for every $x\in \R$ and $r>0$.
However, this metric space is purely 1-unrectifiable.
That is, $\H(\gamma)=0$ for every 1-rectifiable $\gamma\subset\sqrt[s]{\R}$.
In particular, $\sqrt[s]{\R}$ is not $n$-rectifiable for any $n\in\N$.
Thus, for general metric spaces, the remaining question is the validity of the 1-dimensional Besicovitch problem.

The argument of Besicovitch is of a combinatorial nature and in fact proves the rectifiability of measures satisfying \eqref{BP} in any uniformly convex Banach space.
Moreover, the argument applies to any metric space with a reasonable notion of ``strict convexity'' such as the existence of a \emph{unique} geodesic connecting any two points in the space.
However, the argument does not give any insight to the problem in $(\R^2,\|\cdot\|_\infty)$.
A related question is the Besicovitch $\tfrac{1}{2}$ conjecture \cite{preiss-tiser,2404.17536}.
However, even a solution of the most general form of this conjecture would not imply the rectifiability of measures satisfying \eqref{BP}.

One could reasonably expect that the 1-dimensional Besicovitch problem holds in any metric space.
The fact that any connected compact set of finite $\H$ measure is 1-rectifiable often allows for different and significantly simpler arguments for the case of 1-dimensional sets.
This is particularly evident in the proof of the Besicovitch problem in Euclidean spaces.
However, the expectation is in fact false.
\begin{theorem}\label{thm:example-uniform-pu}
For $i\in\Z$, let $e_i$ be the standard basis of $\ell_1$.
The set
\begin{equation*}
        \S:= \left\{\sum_{i\in\Z} 2^{i}s_i e_i: s_i \in \{0,1\},\ s_i =0 \text{ for sufficiently large } i\right\} \subset \ell_1
\end{equation*}
equipped with $\H$ is purely 1-unrectifiable and satisfies
\begin{equation}
\label{uniform}
\H(B(x,r))=r\quad \forall x\in\S,\ r>0.
\end{equation}
\end{theorem}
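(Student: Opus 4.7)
The plan is to prove the two assertions separately: the uniform ball formula \eqref{uniform} by a symmetry reduction together with matching upper and lower bounds on $\H(B(0,r))$, and pure $1$-unrectifiability by a coordinate-wise analysis of Lipschitz maps into $\S$.

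First, I identify $\S$ with the set of sequences $(s_i)_{i\in\Z} \in \{0,1\}^\Z$ with $s_i = 0$ for $i$ sufficiently large, so that the induced metric reads $d(s,t) = \sum_i 2^i |s_i - t_i|$. For any $x \in \S$, the coordinate-wise XOR map $y \mapsto (y_i \oplus x_i)_i$ is an isometry of $\S$ sending $x$ to $0$, so $\H(B(x,r))$ is independent of $x$ and it suffices to compute $\H(B(0,r))$. For the upper bound, a direct count shows $B(0,r)$ is covered by $\lceil r/2^m \rceil$ cylinders $\{y : y_i = z_i \text{ for } i \geq m\}$ of diameter $2^m$, yielding $\H(B(0,r)) \leq r$ on letting $m \to -\infty$. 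For the matching lower bound, the coordinate-sum map $\Phi : \S \to [0, \infty)$, $\Phi(s) = \sum_i 2^i s_i$, is $1$-Lipschitz by the triangle inequality in $\R$ and surjects $B(0,r)$ onto $[0, r)$ via binary expansion, so $\H(B(0,r)) \geq \H([0, r)) = r$.

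Next, for pure $1$-unrectifiability, let $f : F \subset \R \to \S$ be $L$-Lipschitz. The decisive observation is that each coordinate $f_i$ takes only the two values $\{0, 2^i\}$, separated by $2^i$, so the Lipschitz condition forces $f_i$ to be locally constant on $F$ with radius $2^i / L$. I extend $f$ to an $L$-Lipschitz map $\tilde f : \R \to \ell_1$ by linearly interpolating across the gaps of $\overline F$. Since $\ell_1 \cong c_0^*$ has the Radon--Nikodym property, $\tilde f$ is a.e.\ Fr\'echet differentiable. At almost every density point $x_0 \in F$ where $\tilde f'(x_0)$ exists, the $i$-th coordinate of $\tilde f'(x_0)$ equals the limit of the scalar difference quotients; taking that limit along $F$ and using local constancy of $f_i$ yields $\tilde f'_i(x_0) = 0$ for every $i$, and therefore $\tilde f'(x_0) = 0$ in $\ell_1$. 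Consequently the metric derivative of $f$ vanishes a.e.\ on $F$, and Kirchheim's area formula gives $\H(f(F)) = 0$.

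The main obstacle, I expect, is the passage from coordinate-wise vanishing to the vanishing of the Fr\'echet derivative in $\ell_1$: knowing each coordinate has derivative $0$ is not in general enough, and the Radon--Nikodym property of $\ell_1$ is essential to guarantee a priori existence of the vector-valued derivative. An alternative Banach-space-free route proceeds directly from the metric derivative: if it were positive at some density point $x_0$, then the difference quotients $(f(x) - f(x_0))/(x - x_0)$ would have $\ell_1$-norm bounded below by a positive constant while every coordinate tends to $0$, which is incompatible with the a.e.\ differentiability of $\ell_1$-valued Lipschitz maps from $\R$.
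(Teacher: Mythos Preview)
Your proof is correct, and for the uniformity statement it is close to the paper's: the XOR reduction and the $1$-Lipschitz surjection $\Phi\colon B(0,r)\to [0,r)$ are exactly what the paper uses. Your upper bound is in fact cleaner than the paper's: you cover $B(0,r)$ by the $\lceil r/2^m\rceil$ cylinders determined by the coordinates $i\ge m$, each of diameter $2^m$, whereas the paper builds a more elaborate cover adapted to the binary expansion of $r$.

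The unrectifiability argument is where you genuinely diverge. The paper reduces (via Kirchheim) to biLipschitz $\gamma\colon A\to\S$ and observes that the dyadic pieces of $\S$ at scale $2^{-j}$ have diameter $2^{-j}$ and are pairwise $2^{-j}$-separated; hence their preimages in $A$ are $\le L2^{-j}$ in diameter and $\ge 2^{-j}/L$ apart, so $A$ has no Lebesgue density points. This is completely elementary and uses nothing about $\ell_1$ beyond the metric. Your route instead exploits that $\ell_1=c_0^*$ has the Radon--Nikodym property: you extend $f$ to an $L$-Lipschitz $\tilde f\colon\R\to\ell_1$, invoke a.e.\ Fr\'echet differentiability, and then kill the derivative coordinate by coordinate using local constancy of $f_i$ at scale $2^i/L$, concluding via the area formula. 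This is heavier machinery but has the virtue of handling Lipschitz maps directly without the biLipschitz reduction, and of isolating exactly which Banach-space property ($\ell_1$ having RNP) is doing the work. Both arguments are short; the paper's is more self-contained, while yours makes the differentiable structure explicit.
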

A measure satisfying \eqref{uniform} is called \emph{1-uniform}.
It is a very special example of a measure that satisfies \eqref{BP}.
Measures that satisfy \eqref{BP} are called \emph{1-regular}.

Naturally, one now wonders if there are any other 1-uniform metric measure spaces.
Using $\S$ one can construct another 1-uniform space by placing a circle $\TT$ at those points in $\S$ with $s_i =0$ for all $i<0$ (see \Cref{def:torous}).
We denote this space by $\T$.
These two spaces, together with $\R$, are the only 1-uniform metric measure spaces.
\begin{theorem}\label{thm:main-uniform}
A 1-uniform metric measure space is isometric to a scaled copy of $\R$, $\S$ or $\T$.
\end{theorem}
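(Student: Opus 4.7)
The plan is to extract a rigid radial structure from the axiom $\nu(B(y, r)) = r$, bound the cardinality of spheres, and turn the resulting combinatorial datum into an isometric identification with one of $\R$, $\S$, $\T$.

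\textbf{Step 1 (radial pushforward).} Fix any $y_0 \in Y$ and set $f_{y_0}(x) := d(y_0, x)$. The 1-uniform axiom is precisely the statement $(f_{y_0})_\# \nu = \L|_{[0, \infty)}$, so $Y$ is unbounded, $\spt \nu = Y$, and the disintegration theorem produces probability measures $\sigma_r$ on the spheres $S(y_0, r) := f_{y_0}^{-1}(r)$ with $\nu = \int_0^\infty \sigma_r \d r$. Completeness and properness, which follow from 1-regularity together with doubling, guarantee each $S(y_0, r)$ is non-empty and compact.

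\textbf{Step 2 (spheres have at most two points).} A three-point exclusion forces $\# S(y_0, r) \le 2$: if $x_1, x_2, x_3 \in S(y_0, r)$ were distinct and $\epsilon := \tfrac{1}{2}\min_{i \ne j} d(x_i, x_j) > 0$, then by 1-uniformity the balls $B(x_i, \epsilon)$ have measure $\epsilon$ each and are pairwise disjoint, yet they all lie in the annulus $f_{y_0}^{-1}((r - \epsilon, r + \epsilon))$, whose $\nu$-measure is $2\epsilon$ by Step~1. This gives $3\epsilon \le 2\epsilon$, a contradiction. Combined with non-emptiness of spheres coming from properness, $\# S(y_0, r) \in \{1, 2\}$ for every $y_0 \in Y$ and $r > 0$.

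\textbf{Step 3 (classification via branching scales).} For each $y \in Y$ set $E(y) := \{r > 0 : \# S(y, r) = 2\}$. Comparing the disintegrations from two different basepoints $y, y'$ via the pushforward identity $\nu(B(y', r)) = r$, I expect to show that only three possibilities arise for $E(y)$, with the same one occurring at every $y \in Y$: either $E(y) = (0, \infty)$ (the line case), or $E(y) = (0, R)$ for a single $R > 0$ (the mixed case), or $E(y)$ is discrete and---after rescaling---equals the positive dyadic rationals (the branching case). The line case assembles the two-valued selection $r \mapsto \{x_r^+, x_r^-\}$ into an isometry from a rescaled copy of $\R$ onto $Y$; the branching case recovers $\S$ from the almost-everywhere unique selection $r \mapsto x_r$ together with the branching seen at the dyadic scales; the mixed case places a closed geodesic of length $2R$ through $y$ on top of the $\S$-branching to yield a rescaling of $\T$.

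\textbf{Main obstacle.} Steps 1 and 2 are short and clean; the real work is in Step 3. The key lever is the pushforward identity applied simultaneously at many basepoints, producing rigid relations between $E(y)$ and $E(y')$ that pin down both the branching scales and the scaling constant. Verifying that these relations indeed force only the three spaces $\R$, $\S$, $\T$, and that the assembled structure embeds isometrically into the $\ell_1$-lattice defining $\S$ and $\T$, is where the bulk of the technical effort will lie.
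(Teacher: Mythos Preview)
Steps 1 and 2 are correct: the pushforward identity and the packing argument do give $\#S(y_0,r)\le 2$ for every $r>0$. But the trichotomy in Step~3 is false, and this is not merely a gap in the write-up---the invariant $E(y)=\{r>0:\#S(y,r)=2\}$ genuinely cannot separate the three spaces.

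In $\T$ one has $E(y)=(0,\infty)$, exactly as in $\R$. At the base point $y=(x_0,0)$ and radius $r=n+r'$ with $n\in\{0,1,2,\ldots\}$ and $0<r'<1$, the sphere is the pair $\{(x_0\pm r',s^{(n)})\}$, where $s^{(n)}\in\{0,1\}^\N$ is the unique sequence with $\sum_i 2^i s^{(n)}_i=n$; at integer radii the sphere is still a two-point set (the antipodal circle point at level $n-1$ together with the base point at level $n$). So sphere cardinality alone does not distinguish $\R$ from $\T$. In $\S$, the set $E(0)$ is the positive dyadic rationals---dense in $(0,\infty)$, not discrete---since a positive real has two admissible binary expansions in $\S$ (terminating versus trailing $1$'s as $i\to-\infty$) exactly when it is dyadic. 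Thus your description of all three cases is incorrect, and the programme of assembling the space from the data $r\mapsto \#S(y,r)$ cannot succeed as stated.

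What actually separates the three spaces is the connectedness of \emph{balls}, or equivalently the distance between the two sphere points compared with $2r$ (in $\R$ it is always $2r$; in $\T$ it drops below $2r$ past the circle radius; in $\S$ balls are totally disconnected). The paper captures this via \emph{Besicovitch pairs}: points $a,b$ for which $B(a,d(a,b))$ and $B(b,d(a,b))$ are at distance $d(a,b)$ from one another. A disconnected closed ball always produces such a pair, and the key structural lemma shows that any uniform space containing a Besicovitch pair at scale $d$ splits isometrically as $\B(a,d)\times\{0,1\}^\N$ with an $\ell_1$-type metric. The classification then reads: no Besicovitch pair (every ball connected) gives $\R$; Besicovitch pairs exist with positive infimum scale $\delta$ gives $\delta\T$; infimum scale zero gives a scaled $\S$. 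Your $E(y)$ is too coarse an invariant to replace this mechanism.
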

It is interesting to observe that all of these spaces are equipped with a constant multiple of Hausdorff measure.
Also, any two points can be exchanged by an isometry of the space.

With \Cref{thm:main-uniform} in hand, we prove a structure theorem for 1-regular metric measure spaces in terms of their tangent spaces.
Tangent \emph{measures} were introduced by Preiss in his solution of the Besicovitch problem in Euclidean space.
Roughly speaking, a tangent measure is a weak* limit of a Borel measure under dilations of the form $y\mapsto (y-x)/r_i$ as $r_i\to 0$, normalised by some scaling factor.
The set of all tangent measures to a measure $\mu$ at $x$ is denoted by $\Tan(\mu,x)$.
Tangent measures naturally generalise to the concept of a tangent \emph{space} of a metric measure space.
\begin{theorem}\label{regular_tan}
        Let $(X,d,\mu)$ be a 1-regular metric measure space.
        There exists a decomposition $X=E\cup S$ such that for $\mu$-a.e.\ $x\in E$, $\Tan(\mu,x)=\{\R\}$ and, for $\mu$-a.e.\ $x\in S$, $\Tan(\mu,x)=\{\lambda\mathcal S: \lambda>0\}$.
        Moreover, $\mu|_E$ is 1-rectifiable and $S$ is purely 1-unrectifiable.
\end{theorem}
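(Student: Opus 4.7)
The plan is to first understand the tangents of $\mu$ at $\mu$-almost every point, then use the classification from \Cref{thm:main-uniform} to split $X$ into $E$ and $S$, and finally argue rectifiability and pure unrectifiability of the two pieces separately.

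First, I would show that at $\mu$-a.e.\ $x\in X$, every element of $\Tan(\mu,x)$ is 1-uniform. This is a standard consequence of the existence and positivity of the 1-density: because $r^{-1}\mu(B(x,r))$ converges to the positive finite limit $\Theta^1(\mu,x)$, any weak-$*$ limit $\nu$ of the rescalings $r_i^{-1}T_{x,r_i\#}\mu$ inherits the uniform density property $\nu(B(y,r))=r$ for all $y\in\spt\nu$ and all $r>0$, after the appropriate normalisation. Combined with \Cref{thm:main-uniform}, this shows that at $\mu$-a.e.\ $x$, every tangent is isometric to a scaled copy of $\R$, $\S$, or $\T$.

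Second, I would establish the dichotomy. Define $E=\{x:\R\in\Tan(\mu,x)\}$ and $S=X\setminus E$. The key input is the standard ``tangents of tangents are tangents'' principle (valid $\mu$-a.e.\ in the metric measure setting). Applied to $\S$, whose tangents at every point are themselves scaled copies of $\S$, this is consistent with $\Tan(\mu,x)=\{\lambda\S:\lambda>0\}$. Applied to $\T$: at $\mu$-a.e.\ point of $\T$ the space looks locally like the circle part and hence has $\R$ as a tangent; thus if $\T\in\Tan(\mu,x)$ on a set of positive measure we would conclude $\R\in\Tan(\mu,x)$ there, contradicting $x\in S$. Hence $\T$ is excluded on $S$, and for $x\in S$ only scaled copies of $\S$ appear as tangents, giving $\Tan(\mu,x)=\{\lambda\S:\lambda>0\}$ (using the scale-invariance of the family of 1-uniform measures). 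For $x\in E$, I would use the same principle in reverse: since the only 1-uniform space all of whose tangents are $\R$ is $\R$ itself, any other possible tangent at $x\in E$ would introduce, via a second blow-up, tangents inconsistent with $\R\in\Tan(\mu,x)$; this forces $\Tan(\mu,x)=\{\R\}$ on $E$ up to a null set.

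Finally, I would address the geometric conclusions. For rectifiability of $\mu|_E$: at a point with unique tangent $\R$, the measure is asymptotically a positive multiple of Lebesgue measure on a line, and I would invoke the metric-space analogue of Preiss's argument (building on known covering constructions for measures with a unique 1-dimensional Euclidean tangent) to produce Lipschitz curves that cover $\mu|_E$-almost all of $X$. For pure 1-unrectifiability of $S$: if some rectifiable curve $\gamma$ satisfied $\H(\gamma\cap S)>0$, then at $\H$-a.e.\ point of $\gamma\cap S$ the curve provides a tangent line, which combined with the tangent-of-tangent principle yields $\R\in\Tan(\mu,x)$, contradicting $x\in S$.

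The step I expect to be the main obstacle is showing that the presence of $\R$ as the unique tangent at $\mu$-a.e.\ point of $E$ actually implies 1-rectifiability in a general metric space, where one lacks orthogonal projections and Besicovitch-type geometric arguments; the curves covering $E$ must be manufactured directly from the quantitative approximation of $\mu$ by Lebesgue measure on a line at small scales. The exclusion of $\T$ as a generic tangent, while conceptually clear from the tangent-of-tangent principle, also requires care in making the ``at $\mu$-a.e.\ point, tangents of tangents are tangents'' statement precise in this setting.
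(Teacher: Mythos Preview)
Your overall strategy matches the paper's: tangents of a regular measure are uniform, hence lie in $\{\R,\lambda\T,\lambda\S\}$ by the classification; exclude $\T$ via the tangents-of-tangents principle (since $\T$ has $\R$ as a tangent); then invoke the cited black-box results (\Cref{R_tan_rect} and \Cref{tan_pu}) for rectifiability of $E$ and pure unrectifiability of $S$. Those parts are fine, and the obstacles you flagged are exactly the external inputs the paper imports.

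The genuine gap is in your dichotomy on $E$. You write that ``any other possible tangent at $x\in E$ would introduce, via a second blow-up, tangents inconsistent with $\R\in\Tan(\mu,x)$''. This does not work when the other tangent is a scaled copy of $\S$: the tangents of $\S$ are again scaled copies of $\S$, so the tangents-of-tangents principle only tells you $\{\lambda\S\}\subset\Tan(\mu,x)$, which is perfectly consistent with $\R$ also lying in $\Tan(\mu,x)$. Nothing in your argument rules out $\Tan(\mu,x)=\{\R\}\cup\{\lambda\S:\lambda>0\}$ at a single point.

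The paper closes this gap with a quantitative iteration (\Cref{scaling-dGHs}, \Cref{d_T_to_R}, \Cref{flat_at_inf}, \Cref{tan_is_R}), not with tangents-of-tangents. The key estimate is that $\dGHs(T_r\S,\R)\geq 1/8$ uniformly in $r$, while $\dGHs(T_r\T,\R)\leq 2r$. A propagation lemma then shows: if $T_r(\mu,x)$ is $\epsilon$-close to $\R$ and $T_{r/\lambda}(\mu,x)$ is $(\epsilon/\lambda)$-close to \emph{some} element of $\U$, then in fact $T_{r/\lambda}(\mu,x)$ is $(25\epsilon/\lambda)$-close to $\R$. Iterating with $\lambda=25$ from a single scale where $\R$ is nearly attained drives $\dGHs(T_r(\mu,x),\R)\to 0$ along all small $r$, forcing $\Tan(\mu,x)=\{\R\}$. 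This is the substantive new content of the section; your proposal does not supply it.
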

The moreover part of \Cref{regular_tan} is given by work of the author \cite{mm-tan} which proves a characterisation of $n$-rectifiable metric spaces in terms of flat tangent spaces.
The rectifiability statement also follows from joint work with Valentine \cite{con-tan}, which proves the 1-rectifiability of metric measures spaces with connected tangents.
Tangent spaces of 1-regular spaces are 1-uniform.
Therefore, to prove \Cref{regular_tan} we must show that $\Tan(\mu,x)$ consists of \emph{either} $\R$ or $\S$, and not a combination of the two and $\T$.

\Cref{regular_tan} allows us to solve the 1-dimensional Besicovitch problem in various classes of metric spaces.
We refer the reader to \Cref{sec:BP} for the relevant definitions.
\begin{theorem}
        \label{solve_BP}
        Let $X$ be one of the following:
        \begin{enumerate}
                \item A doubling geodesic metric space.
                \item A uniformly convex Banach space.
                \item The Heisenberg group equipped with any homogeneous norm.
        \end{enumerate}
        Then any 1-regular measure on $X$ is 1-rectifiable.
\end{theorem}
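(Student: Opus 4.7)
The plan is to apply \Cref{regular_tan}, which decomposes $X=E\cup S$ with $\mu|_E$ already 1-rectifiable, and then to argue in each of the three settings that $\mu(S)=0$. Since for $\mu$-a.e.\ $x\in S$ every tangent of $\mu$ at $x$ is (a dilate of) $\mathcal S$ as a metric measure space, the goal in each case reduces to ruling out $\mathcal S$ as a tangent.

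For (2), tangents of a measure on a Banach space are obtained by dilating within the same Banach space, so if a dilate of $\mathcal S$ appears as a tangent then $\mathcal S$ itself embeds isometrically into $X$. Uniform convexity implies strict convexity, so equality in the triangle inequality forces $b$ to lie on the unique affine segment from $a$ to $c$. I would apply this to the four points $A=0$, $B=e_0$, $C=2e_1$, $D=e_0+2e_1$ of $\mathcal S$, whose pairwise $\ell_1$-distances are $(1,2,3,3,2,1)$: both $B$ and $C$ would have to lie on the segment $[A,D]$ at parameters $1/3$ and $2/3$, forcing $\|B-C\|=1$, which contradicts $\|B-C\|_1=3$. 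For (1), tangents of a complete doubling geodesic space are complete geodesic spaces, and I would upgrade this to connectedness of the support of any tangent of $\mu$, using 1-regularity together with ambient geodesics to interpolate between nearby points of $\spt\mu$ at each scale. Since $\mathcal S$ is totally disconnected---projecting any continuous map $[0,1]\to\mathcal S$ onto the coordinate $e_i$ lands in the discrete set $\{0,2^i\}$, forcing the map to be constant---no scaled copy of $\mathcal S$ can occur as such a tangent.

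The main obstacle is (3). The Heisenberg group with any homogeneous norm is self-similar in the sense that every tangent at every point is isometric to the whole space, so one must rule out an isometric embedding of $\mathcal S$ into the group with any homogeneous norm. The strict-convexity argument from (2) does not transfer directly, since homogeneous norms need not be strictly convex and there is no a priori ``straight segment'' structure on the group. I would still try the four-point configuration $A,B,C,D$ above, aiming to show that any such embedding forces the four points to lie in a common horizontal abelian subgroup, on which the homogeneous norm restricts to an ordinary norm on $\R^2$; one can then finish by a direct analysis on that planar subgroup. Carrying out this reduction uniformly over homogeneous norms is where I expect the real work to lie.
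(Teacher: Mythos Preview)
Your plan to invoke \Cref{regular_tan} and then rule out $\mathcal S$ as a tangent is exactly the paper's plan, and your treatment of case (2) is essentially the paper's argument (the paper uses $\S_3$ rather than your four-point set, but the idea is the same: strict convexity forces co-linearity, which is incompatible with the $\ell_1$ distances).

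The gap is in cases (1) and (3), and it is the same gap in both: you try to argue directly about geometric properties of the tangent \emph{space}, but tangents of $\mu$ are limits of rescalings of $\spt\mu$, not of $X$. In (1), the ambient space being geodesic does not make $\spt\mu$ geodesic or connected at any scale, and your sentence ``using 1-regularity together with ambient geodesics to interpolate between nearby points of $\spt\mu$'' is precisely the step that fails: a geodesic between two points of $\spt\mu$ has no reason to stay near $\spt\mu$. What \emph{does} follow from $\mathcal S\in\Tan(\mu,x)$ is that any finite subset $\S_n\subset\mathcal S$ admits a $(1+\epsilon)$-bi-Lipschitz embedding into $X$ for every $\epsilon>0$, and this is what the paper exploits. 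In a geodesic $N$-doubling space one joins $s_0$ to each $s_i$ by a geodesic $\gamma_i$; the points $\gamma_i(2)$ for $1\le i\le n$ then lie in $B(s_0,2)$ and are pairwise $4$-separated, which is impossible once $n>N$.

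For (3) you correctly observe that dilations force an isometric copy of $\mathcal S$ inside $\mathbb H$, but your proposed reduction to a horizontal abelian subgroup via the four-point configuration does not work as stated: equality in the triangle inequality for a general homogeneous norm does not force points onto a common horizontal line (homogeneous norms need not be strictly convex, as you note, and horizontal segments are not generally geodesics for such norms). The paper avoids this entirely by treating $\mathbb H$ as a doubling space with dilations $\delta_\lambda$ satisfying the growth bound $d(\delta_{2^m}x,\delta_{2^n}x)\le\eta(2^m+2^n)d(0,x)$ for large $m,n$, and then running a pigeonhole/doubling argument on the normalised points $\delta_{2^{-i}}(s_i)\in B(0,1)$: two of them must be close, and the growth bound then forces $d(s_m,s_n)<2^m+2^n$, a contradiction.
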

\Cref{solve_BP} will follow from \Cref{regular_tan} by showing that $\S$ does not isometrically embed into any such $X$.

The article is structured as follows.
In \Cref{prelims} we give various preliminary lemmas that we will use throughout the argument.
We also recall the definition of a tangent space and several results related to them.
In \Cref{sec:S} we show that $\S$ and $\T$ are purely 1-unrectifiable and 1-uniform.
In \Cref{sec:besicovitch-pairs} we adapt the notion of a Besicovitch \emph{circle pair} from \cite{besicovitchII} so that the definition is suitable for working in an arbitrary metric space.
We then deduce several structural consequences of the existence of a Besicovitch pair in a 1-uniform metric measure space.
Since we consider metric spaces without any notion of strict convexity, our arguments completely diverge from those of Besicovitch.
\Cref{sec:classification} contains our classification of 1-uniform spaces using the results on Besicovitch pairs.
Finally, \Cref{sec:tangents} contains the proof of \Cref{regular_tan} and \Cref{sec:BP} the proof of \Cref{solve_BP}.

For notational simplicity, from this point on we will not use the ``1-'' prefix and simply refer to a rectifiable, purely unrectifiable, regular or uniform metric measure space.

\section{Preliminaries}
\label{prelims}
\subsection{Geometric properties of metric spaces}

Throughout this article, $(X,d)$ will denote a complete metric space and a \emph{measure} will refer to a $\sigma$-finite Borel regular (outer) measure.
For $\lambda >0$, let $\lambda X$ denote the metric space $(X,\lambda d)$.
For $x\in X$ and $r\geq 0$, let $B(x,r)$ be the \emph{open} ball of radius $r$ and $\B(x,r)$ the \emph{closure} of $B(x,r)$.

\begin{definition}\label{def:regular}
For a measure $\mu$ on $X$ we define the \emph{density} of $\mu$ to be
\begin{equation*}
        \th(\mu,x) := \lim_{r\to 0} \frac{\mu(B(x,r))}{2r}
\end{equation*}
whenever the limit exists.
Note that, in the case that the limit exists, we can use either open or closed balls or $\B(x,r)$ and obtain an equivalent definition and value of $\th(\mu,x)$.
The measure $\mu$ is \emph{regular} if $\th(\mu,x)$ exists $\mu$-a.e.
Further, $\mu$ is \emph{uniform} if $\mu(B(x,r))= r$ for every $x\in X$ and $r \geq 0$.
\end{definition}

The (1-dimensional) Hausdorff measure is defined by
\begin{equation*}
        \H(A) := \lim_{\delta \to 0} \inf\left\{\sum_{i\in \N} \diam A_i : A \subset \bigcup_{i\in\N} A_i\right\}
\end{equation*}
for any Borel subset $A$ of a metric space $X$.
Recall that there is a natural relationship between Hausdorff measure and density: if $\th(\mu,x) \leq \lambda$ for all $x$ is a Borel set $A$ then $\mu(A) \leq 2\lambda \H(A)$; if $\th(\mu,x) \geq \lambda$ for all $x\in A$ then $\mu(A) \geq \lambda \H(A)$.
See \cite[Section 2.10.19]{federer}.

\begin{definition}\label{def:rectifiable}
        A subset $S$ of a complete metric space $X$ is \emph{rectifiable} if there exist countably many $A_i \subset \R$ and Lipschitz $\gamma_i \colon A_i \to X$ such that
        \begin{equation*}
        \H\left(S\setminus \bigcup_i \gamma_i(A_i)\right)=0.
        \end{equation*}
\end{definition}
By a result of Kirchheim \cite[Lemma 4]{kirchheim-regularity}, for any $\epsilon>0$ an equivalent definition is obtained if each $\gamma_i$ is $(1+\epsilon)$-biLipschitz.
If $X$ is a Banach space, by linearly extending each $\gamma_i$ on the complement of each $A_i$, we get an equivalent definition if we require each $A_i=\R$.

The following result of Kirchheim \cite[Theorem 9]{kirchheim-regularity} shows that rectifiable sets are regular.
\begin{theorem}\label{thm:kirchheim}
        Let $X$ be a rectifiable metric space with $\H(X)<\infty$.
        Then \[\th(\H,x)=1\] for $\H$-a.e.\ $x\in X$.
\end{theorem}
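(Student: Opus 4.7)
The plan is to reduce to studying the density on a single nearly-isometric piece, and then to use the $(1+\epsilon)$-biLipschitz refinement of the rectifiable decomposition together with the Lebesgue density theorem on $\R$.

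First, I would fix $\epsilon>0$ and use the refinement of \Cref{def:rectifiable} already mentioned in the excerpt: $X$ equals an $\H$-null set together with countably many sets $X_i = \gamma_i(A_i)$ where $A_i \subset \R$ is Borel and $\gamma_i \colon A_i \to X$ is $(1+\epsilon)$-biLipschitz. By standard inner regularity and disjointification one may assume the $X_i$ are pairwise disjoint Borel sets. Since $\gamma_i$ is $(1+\epsilon)$-biLipschitz, $\H\llcorner X_i$ is comparable to $\gamma_{i\#}(\L\llcorner A_i)$ with distortion in $[(1+\epsilon)^{-1},(1+\epsilon)]$, so $\H(X) < \infty$ implies $\L(A_i) < \infty$ and each $A_i$ has a Lebesgue density point at $\L$-a.e.\ of its elements.

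The next step is to analyse the density on a single piece. Let $x = \gamma_i(t_0)$ where $t_0$ is a Lebesgue density point of $A_i$. The $(1+\epsilon)$-biLipschitz condition gives the sandwich
\[
\gamma_i\bigl(A_i \cap [t_0 - r/(1+\epsilon),\, t_0 + r/(1+\epsilon)]\bigr) \subset \B(x,r)\cap X_i \subset \gamma_i\bigl(A_i \cap [t_0 - r(1+\epsilon),\, t_0 + r(1+\epsilon)]\bigr),
\]
and applying the biLipschitz distortion estimate on $\H$ together with the Lebesgue density of $A_i$ at $t_0$ yields
\[
(1+\epsilon)^{-2} \leq \liminf_{r\to 0} \frac{\H(\B(x,r)\cap X_i)}{2r} \leq \limsup_{r\to 0} \frac{\H(\B(x,r)\cap X_i)}{2r} \leq (1+\epsilon)^{2}.
\]

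The remaining task is to pass from $\H(\B(x,r)\cap X_i)$ to $\H(\B(x,r))$, i.e.\ to show that the other pieces $X_j$ contribute negligibly at $\H$-a.e.\ $x\in X_i$. Equivalently I want the density-one property $\H(\B(x,r)\setminus X_i)/r \to 0$ at $\H$-a.e.\ $x\in X_i$. This is the main obstacle, because in a general metric space the Besicovitch covering theorem is unavailable. I would handle it by a Vitali-type argument using only the $\sigma$-finiteness and Borel regularity of $\H$ on $X$ (the measure is finite here), combined with the fact that at $\H$-a.e.\ $x \in X_i$ the local Hausdorff density is already bounded from below by the single-piece estimate above, so standard differentiation in the sense of Federer applies to the Borel partition $\{X_i\}$. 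Combining this density-one property with the single-piece sandwich gives
\[
(1+\epsilon)^{-2} \leq \th(\H,x) \leq (1+\epsilon)^{2} \qquad \H\text{-a.e.\ } x.
\]
Finally, applying this for a countable sequence $\epsilon_n \to 0$ and taking the union of exceptional null sets yields $\th(\H,x)=1$ at $\H$-a.e.\ $x\in X$.
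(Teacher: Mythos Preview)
The paper does not give its own proof of this statement; it simply cites Kirchheim \cite[Theorem 9]{kirchheim-regularity}. Your sketch is essentially Kirchheim's original argument: pull back to $\R$ along $(1+\epsilon)$-biLipschitz pieces, use Lebesgue density there, and then let $\epsilon\to 0$.

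The only point that deserves a sharper justification is your Step~3, the density-one property $\H(\B(x,r)\setminus X_i)/r\to 0$ at $\H$-a.e.\ $x\in X_i$. Your phrasing (``Vitali-type argument'', ``standard differentiation in the sense of Federer'') is vague enough to look like a gap, since neither the Besicovitch nor the classical Vitali covering theorem is available in a general metric space. In fact no such covering theorem is needed: because $\H$ is defined via diameters, one has directly that if $\H(X)<\infty$ and $E\subset X$ is Borel then $\Theta^{*1}(\H\llcorner_{X\setminus E},x)=0$ for $\H$-a.e.\ $x\in E$; this is Federer \cite[2.10.19]{federer}, and the paper itself invokes exactly this fact in the proof of \Cref{cor:rectifiable-regular}. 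With that reference in place, your argument is complete and matches the approach of the cited source.
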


Regular measures on a metric space are related in the following way.
\begin{lemma}\label{lem:regular-measures-are-equivalent}
Let $X$ be a metric space and $\mu,\nu$ two measures on $X$ such that
\begin{equation*}
        0< \th(\nu,x) \quad \text{and} \quad \th(\mu,x)< \infty
\end{equation*}
for every $x$ in some Borel $B\subset X$.
Then $\mu|_B \ll \nu|_B$ and the Radon-Nikodym derivative is given by
\begin{equation*}
        \frac{d\mu}{d\nu} = \frac{\th(\mu,)}{\th(\nu,)}
\end{equation*}
\end{lemma}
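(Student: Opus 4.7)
The plan is to first establish the absolute continuity $\mu|_B \ll \nu|_B$ by comparing both measures to $\H$ on Borel pieces where the densities are uniformly bounded, and then to identify the Radon--Nikodym derivative by a pointwise differentiation argument.

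For the absolute continuity, for each $n \in \N$ consider
\[B_n = \{x \in B : \th(\nu, x) \geq 1/n \text{ and } \th(\mu, x) \leq n\},\]
which is Borel, and note that $B = \bigcup_n B_n$. Applying the density inequalities recalled above the statement to any Borel $A \subset B_n$ gives $\mu(A) \leq 2n\, \H(A)$ and $\nu(A) \geq (1/n)\,\H(A)$, and combining these yields $\mu(A) \leq 2n^2\, \nu(A)$. Hence $\mu|_{B_n} \ll \nu|_{B_n}$ for every $n$, and summing over $n$ yields $\mu|_B \ll \nu|_B$, so the Radon--Nikodym derivative exists abstractly.

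To identify that derivative, the key observation is that at every $x \in B$ both densities exist finitely, so
\[\lim_{r \to 0}\frac{\mu(B(x, r))}{\nu(B(x, r))} = \lim_{r \to 0} \frac{\mu(B(x,r))/(2r)}{\nu(B(x,r))/(2r)} = \frac{\th(\mu, x)}{\th(\nu, x)}.\]
It remains to match this pointwise limit to the abstract derivative $d\mu/d\nu$. Since $\th(\nu, x) \in (0, \infty)$ for every $x \in B$, the doubling ratio $\nu(B(x, 2r))/\nu(B(x, r))$ tends to $2$ as $r \to 0$, so $\nu|_B$ is asymptotically doubling on all of $B$. A Lebesgue differentiation theorem for asymptotically doubling measures then identifies $d\mu/d\nu(x)$ with the symmetric limit above for $\nu$-a.e.\ $x \in B$, giving the stated formula.

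The main obstacle is invoking a Lebesgue differentiation theorem in an abstract metric space, where classical Vitali and Besicovitch covering theorems may fail. However, the required statement is a standard consequence of asymptotic doubling of $\nu$ (cf.\ \cite{federer}), which in our situation is an immediate consequence of the existence of a positive finite density. Everything else in the argument is bookkeeping via the density comparison with $\H$.
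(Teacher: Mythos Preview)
Your argument is correct and follows essentially the same route as the paper: absolute continuity via comparison with $\H$, then asymptotic doubling of $\nu$, then the Lebesgue differentiation theorem to identify the derivative as the ratio of ball measures. The paper compresses your $B_n$ decomposition into the single line ``$\mu|_B \ll \H|_B$ and $\H|_B \ll \nu|_B$'' and cites \cite[Section 3.4]{zbMATH06397370} rather than Federer for the differentiation theorem, but the logic is the same. One small remark: you assert $\th(\nu,x)\in(0,\infty)$ when the hypothesis only gives $\th(\nu,x)>0$; the paper is equally casual here, and in the intended applications the density is finite, so this is harmless.
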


\begin{proof}
Since both $\mu|_B \ll \H|_B$ and $\H|_B \ll \nu|_B$, $\mu|_B \ll \nu|_B$.
To prove the statement about the Radon-Nikodym derivatives, it suffices to suppose that both $\mu$ and $\nu$ are finite.

To this end, first note that $\nu$ is asymptotically doubling.
That is,
\begin{equation*}
        \frac{\nu(B(x,2r))}{\nu(B(x,r))} < \infty 
\end{equation*}
for $\nu$-a.e.\ $x\in X$ and all sufficiently small $r>0$.
Therefore the Lebesgue differentiation theorem holds for $\nu$ integrable functions on $X$ \cite[Section 3.4]{zbMATH06397370}, such as $d\mu/d\nu$.
By applying it we obtain
\begin{equation*}
        \frac{d\mu}{d\nu}(x) = \lim_{r\to 0} \frac{1}{\nu(B(x,r))} \int_{B(x,r)} \frac{d\mu}{d\nu} d\nu = \lim_{r\to 0} \frac{\mu(B(x,r))}{\nu(B(x,r))} = \frac{\th(\mu,x)}{\th(\nu,x)}
\end{equation*}
for $\nu$-a.e.\ $x\in X$, as required.
\end{proof}

\begin{corollary}\label{cor:rectifiable-regular}
        Let $(X,d,\mu)$ be a metric measure space with $\H(X)<\infty$ for which $\th(\mu,x) = 1$ for $\H$-a.e.\ $x\in X$.
        Suppose that $R \subset X$ is rectifiable and has finite $\mu$ measure.
        Then $\mu|_R = \H|_R$.
\end{corollary}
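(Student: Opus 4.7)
The plan is to combine Kirchheim's theorem (\Cref{thm:kirchheim}) applied to $R$ with the Radon--Nikodym statement in \Cref{lem:regular-measures-are-equivalent}, using $\H|_R$ as the reference measure.

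First I would note that $R$, being rectifiable and with $\H(R)\le \H(X)<\infty$, is itself a rectifiable metric space of finite $\H$ measure. Applying \Cref{thm:kirchheim} to $R$ therefore gives $\th(\H|_R,x)=1$ for $\H$-a.e.\ $x\in R$, where it is harmless to compute the density either intrinsically in $R$ or ambiently in $X$ since $\H|_R(B(x,r))$ has the same value in both viewpoints.

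Next I would combine this with the hypothesis $\th(\mu,x)=1$ for $\H$-a.e.\ $x\in X$: restricting to $R$, both densities $\th(\mu,\cdot)$ and $\th(\H|_R,\cdot)$ exist, are positive and finite, and equal $1$ at $\H$-a.e.\ (hence $\mu$-a.e.) point of $R$. This puts us precisely in the setting of \Cref{lem:regular-measures-are-equivalent}, applied with $\nu=\H|_R$ and $B=R$. The lemma yields $\mu|_R\ll \H|_R$ with
\begin{equation*}
\frac{d\mu|_R}{d\H|_R}(x)=\frac{\th(\mu,x)}{\th(\H|_R,x)}=1
\end{equation*}
for $\H|_R$-a.e.\ $x$, and integrating over any Borel $A\subset R$ gives $\mu(A)=\H(A)$, i.e.\ $\mu|_R=\H|_R$.

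The steps are all bookkeeping given the two earlier results; the only mild point to verify is that one is entitled to apply \Cref{thm:kirchheim} to $R$ rather than to $X$ (the hypothesis $\H(X)<\infty$ ensures $\H(R)<\infty$, and rectifiability is inherited by subsets). There is no real obstacle here.
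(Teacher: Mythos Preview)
Your argument is correct and follows the same approach as the paper: invoke \Cref{thm:kirchheim} to get $\th(\H|_R,x)=1$ $\H$-a.e.\ on $R$, then apply \Cref{lem:regular-measures-are-equivalent} with $\nu=\H|_R$. The only difference is that the paper first passes from $\th(\mu,x)=1$ to $\th(\mu|_R,x)=1$ (using $\mu\ll\H$ and Federer's density theorem for $\H|_{X\setminus R}$) before invoking the lemma, whereas you feed the unrestricted $\mu$ into \Cref{lem:regular-measures-are-equivalent} directly; since the lemma's Radon--Nikodym formula is stated in terms of the unrestricted densities $\th(\mu,\cdot)/\th(\nu,\cdot)$, your shortcut is legitimate.
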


\begin{proof}
        Since $\H(X)<\infty$, by \cite[Section 2.10.19]{federer}, $\th(\H|_{X\setminus R},x)=0$ for $\H$-a.e.\ $x\in R$.
        Since $\mu\ll \H$, $\th(\mu|_{X\setminus R},x)=0$ for $\H$-a.e.\ $x\in R$ and so $\th(\mu|_R,x)=1$ for $\H$-a.e.\ $x\in R$.
        By \Cref{thm:kirchheim}, $\th(\H|_R,x)=1$ for $\H$-a.e. $x\in R$, and so the result follows from applying \Cref{lem:regular-measures-are-equivalent} to the $\H$-full measure set where $\th(\H|_R,x)=\th(\mu|_R,x)=1$.
\end{proof}

The following result is standard.
\begin{lemma}\label{lem:uniform-is-compact}
        Any uniform metric measure space is \emph{proper}.
        That is, closed and bounded sets are compact.
\end{lemma}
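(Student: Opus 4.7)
The plan is to invoke the standard characterisation that a complete metric space is proper if and only if every closed ball is totally bounded. Since the paper has fixed the convention that $(X,d)$ is complete, it suffices to show that for every $x_0 \in X$, $R > 0$, and $\varepsilon > 0$, the closed ball $\B(x_0,R)$ admits a finite $\varepsilon$-net.

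To produce such a net, I would pick a maximal $\varepsilon$-separated subset $\{x_1,\dots,x_N\} \subset \B(x_0,R)$, that is, a maximal set with $d(x_i,x_j) \geq \varepsilon$ for $i\neq j$. Maximality gives that the balls $B(x_i,\varepsilon)$ cover $\B(x_0,R)$, so it only remains to bound $N$. By $\varepsilon$-separation, the open balls $B(x_i,\varepsilon/2)$ are pairwise disjoint, and each one is contained in $B(x_0, R+\varepsilon/2)$. The uniformity hypothesis then gives
\begin{equation*}
N \cdot \frac{\varepsilon}{2} = \sum_{i=1}^N \mu\bigl(B(x_i,\varepsilon/2)\bigr) \leq \mu\bigl(B(x_0, R+\varepsilon/2)\bigr) = R + \frac{\varepsilon}{2},
\end{equation*}
so $N \leq 2R/\varepsilon + 1 < \infty$, as required.

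Having shown that every closed ball is totally bounded, completeness of $X$ implies it is compact. Any closed and bounded subset of $X$ is a closed subset of some $\B(x_0,R)$ and is therefore itself compact. There is no real obstacle here; the only thing to be slightly careful about is the disjointness of the half-radius balls, which is exactly what the $\varepsilon$-separation of the $x_i$ buys.
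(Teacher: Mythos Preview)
Your proof is correct and uses essentially the same idea as the paper: take a maximal separated set and bound its cardinality by a packing argument using $\mu(B(x,r))=r$. The only cosmetic difference is that the paper first shows every ball is covered by at most $5$ balls of half the radius and then iterates, whereas you go directly to an $\varepsilon$-net of size at most $2R/\varepsilon+1$; both routes yield total boundedness of closed balls, after which completeness finishes the job.
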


\begin{proof}
        We show that any ball $B(x,r)$ can be covered by at most 5 closed balls of radius $r/2$.
        Inductively, this implies that any ball is totally bounded.
        Therefore, any closed and bounded set, being complete and totally bounded, is compact.

        Let $x_i$, $i\in I$ be a maximal $r/2$ separated subset of $B(x,r)$.
        Then $B(x,r) \subset \cup_{i\in I} B(x_i,r/2)$.
        Moreover the $B(x_i, r/4)$ are disjoint subsets of $B(x,5r/4)$ and so
        \begin{equation*}
        |I|r/4 = \sum_{i\in I} \tmu(B(x, r/4)) = \tmu(\cup_{i\in I} B(x, r/4)) \leq \tmu(B(x, 5r/4)) = 5r/4.
        \end{equation*}
        Therefore $|I|\leq 5$.
\end{proof}

We will inductively construct functions using the following lemma.
\begin{lemma}\label{lem:increasing-define-function}
        Let $(X,d), (Y,\rho)$ be two metric spaces, $S_1\subset S_2 \subset S_3 \subset \ldots$ an increasing sequence of subsets of $X$ and for each $n\in \N$, $f_n\colon S_n \to Y$ such that, for every $m \leq n$, $f_n|_{S_m}=f_m$. 
        If each $f_n$ is 1-Lipschitz/injective/an isometry, then so is
        \begin{align*}
                f &\colon \bigcup_{n\in \N} S_n \to Y\\
                f(x) &= f_n(x) \text{ whenever } x\in S_n.
        \end{align*}
\end{lemma}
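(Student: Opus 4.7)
The plan is to reduce the three claims to a single observation: any pair (or single point, for injectivity) in $\bigcup_n S_n$ already lives in some fixed $S_n$, so the relevant property of $f$ at that pair/point is inherited directly from the property of $f_n$.

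First I would check that $f$ is well-defined. If $x$ lies in both $S_m$ and $S_n$ with $m\leq n$, then by hypothesis $f_n|_{S_m}=f_m$, so $f_n(x)=f_m(x)$, and the value $f(x)$ does not depend on the choice of index used to define it. This also shows that $f|_{S_n}=f_n$ for every $n$.

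Next, given any two points $x,y\in \bigcup_{n\in\N} S_n$, choose $n_x,n_y$ with $x\in S_{n_x}$ and $y\in S_{n_y}$, and set $n:=\max(n_x,n_y)$. Because the sequence $(S_n)$ is increasing, both $x$ and $y$ belong to $S_n$, and by the previous paragraph $f(x)=f_n(x)$, $f(y)=f_n(y)$. Now I would run the three cases in parallel: if each $f_n$ is 1-Lipschitz then $\rho(f(x),f(y))=\rho(f_n(x),f_n(y))\leq d(x,y)$; if each $f_n$ is an isometry the same computation yields equality; if each $f_n$ is injective and $x\neq y$, then $f(x)=f_n(x)\neq f_n(y)=f(y)$.

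There is no real obstacle here: the lemma is essentially the statement that a compatible direct system of maps determines a map on the union, together with the observation that ``1-Lipschitz'', ``injective'', and ``isometry'' are each determined by the behaviour on pairs (respectively single points), so they pass to the colimit automatically. The only mild subtlety is remembering to use that the $S_n$ form an increasing chain in order to find a single index that contains both $x$ and $y$; without this, one would need a cofinality argument.
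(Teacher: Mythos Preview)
Your proof is correct and follows essentially the same approach as the paper: pick a common index $n$ with $x,y\in S_n$ and inherit the desired property from $f_n$. You add the explicit well-definedness check and the construction $n=\max(n_x,n_y)$, which the paper leaves implicit, but the argument is otherwise identical.
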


\begin{proof}
If $x,y\in S$ then there exists an $n\in \N$ such that $x,y\in S_n$.
Then, if each $f_n$ is 1-Lipschitz,
\begin{equation*}
        \rho(f(x),f(y)) = \rho(f_n(x),f_n(y)) \leq d(x,y)
\end{equation*}
and hence $f$ is 1-Lipschitz.

If each $f_n$ is an isometry then the same argument with the inequality replaced by an equality shows that $f$ is an isometry.
A similar argument proves the result when each $f_n$ is injective.
\end{proof}

\subsection{Tangents of metric measure spaces}
\label{tangents}

\textcite{preiss} introduced the notion of a \emph{tangent measure} of a measure $\mu$ on Euclidean space as one step the solution of the classical Besicovitch problem.
In \cite{mm-tan}, these ideas were extended to define \emph{tangent spaces} of a metric measure space.
We begin by recalling several results and constructions from \cite{mm-tan}.

\begin{definition}[Definition 2.12 \cite{mm-tan}]
    \label{dKR}
    Let $X$ be a metric space and $x\in X$.
    For $\mu,\nu\in \Mloc(X)$, and $L,r>0$, define
    \[\dKR^{L,r}_x(\mu,\nu) = \sup\left\{\int g \d(\mu-\nu) : g\colon X\to [-1,1],\ L\text{-Lipschitz},\ \spt g\subset B(x,r)\right\}.\]
    Define $\dKR_x(\mu,\nu)$ to be the infimum, over all $0< \epsilon <1/2$, for which
    \[\dKR_x^{1/\epsilon,1/\epsilon}(\mu,\nu)<\epsilon.\]
    If no such $0<\epsilon<1/2$ exists, set $\dKR_x(\mu,\nu)=1/2$.
\end{definition}

\begin{definition}[Definition 4.9 \cite{mm-tan}]
	\label{dGHs}
        A \emph{pointed} metric measure space $(X,d,\mu,x)$ consists of a metric measure space and a distinguished point $x\in X$.

	Let $(X,d,\mu,x)$ and $(Y,\rho,\nu,y)$ be pointed metric measure spaces.
	Define \[\dGHs((X,d,\mu,x),(Y,\rho,\nu,y))\] to be the infimum, over all pointed metric spaces $(Z,z)$ and all isometric embeddings $(X,x),(Y,y)\to (Z,z)$ of $\dKR_z(\mu,\nu)$.
\end{definition}
Note that $\dKR_z^{L,r}$ is increasing in both $L$ and $r$.
By \cite[Corollary 4.13]{mm-tan}, $\dGHs$ is a complete and separable metric on $\M$, the equivalence classes of all pointed metric measure spaces under the relation
\[(X,\mu,x)\sim (Y,\nu,y) \text{ if } (\spt\mu\cup\{x\},\mu,x) \text{ is isometric to } (\spt\nu\cup\{y\},\nu,y).\]
Since an element of $\M$ is determined by the support of the measure alone, we will often omit the metric space from notation.
Similarly, we will only specify an explicit metric when necessary.

Given $(\mu,d,x)\in \M$ with $x\in\spt\mu$ and $r>0$, let
\[T_{r}(\mu,d,x) := \left(\frac{\mu}{\mu(B(x,r))},\frac{d}{r}, x\right)\]
\begin{definition}[Definition 5.1 \cite{mm-tan}]
	\label{def:gh-tangent}
	A $(\nu,\rho,y) \in \M$ is a \emph{tangent measure} of $(\mu,d,x)\in \M$ if there exist $r_k\to 0$ such that
	\[\dGHs(T_{r_k}(\mu,d,x),(\nu,\rho, y))\to 0.\]
	The set of all tangent measures of $(\mu,d,x)$ will be denoted by $\Tan(\mu,d,x)$.
\end{definition}

The tangent spaces of asymptotically doubling measures are compact metric spaces.
\begin{proposition}[Proposition 5.3 \cite{mm-tan}]
	\label{tangents_exist}
	Let $X$ be a metric space and let $\mu\in \Mloc(X)$ be asymptotically doubling.
	For \muae $x\in \spt\mu$ and every $r_0>0$,
	\begin{equation}\label{con_subseq}
	\{T_r(\mu,x): 0<r<r_0\} \text{ is pre-compact.}
		\end{equation}
	In particular, for any $x\in \spt\mu$ satisfying \eqref{con_subseq}, $\Tan(\mu,x)$ is a non-empty compact metric space when equipped with $\dGHs$ and
	\begin{equation}\label{contra}
	\forall \delta>0,\ \exists r_x>0 \text{ s.t. } \dGHs(T_{r}(\mu,x),\Tan(\mu,x)) \leq\delta\ \forall 0<r<r_x.
	\end{equation}
\end{proposition}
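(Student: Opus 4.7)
The plan is to reduce the claim to a Gromov-type precompactness principle for pointed metric measure spaces in the $\dGHs$ metric, using asymptotic doubling to control the rescaled mass distributions uniformly on every fixed scale. Once precompactness is in hand, non-emptiness and compactness of $\Tan(\mu,x)$ follow from $(\M,\dGHs)$ being a complete separable metric space, and the uniform approach \eqref{contra} is a short contradiction argument.

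\textbf{Step 1: Uniform doubling at a.e. point.} Asymptotic doubling of $\mu$ says that $\limsup_{r\to 0}\mu(B(x,2r))/\mu(B(x,r))<\infty$ for \muae $x$. By Egorov on the corresponding family of radius-parametrised quantities, I would show that for \muae $x$ there exist $C_x\in(1,\infty)$ and $r_x>0$ with $\mu(B(x,2r))\le C_x\,\mu(B(x,r))$ for all $0<r<r_x$. Iterating the doubling inequality, for every $R>1$ there is $C_{x,R}=C_x^{\lceil\log_2 R\rceil}$ with
\[
\mu(B(x,Rr))\le C_{x,R}\,\mu(B(x,r))\qquad\text{for all }0<r<r_x/R.
\]
Since $\mu$ is locally finite and $x\in\spt\mu$, we also have $\mu(B(x,r))\in(0,\infty)$ for all sufficiently small $r>0$, so the normalised rescalings $T_r(\mu,d,x)$ are well defined.

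\textbf{Step 2: Precompactness.} Writing $\nu_r:=T_r(\mu,d,x)$ and noting $\nu_r(B(x,1))=1$, Step~1 gives $\nu_r(B(x,R))\le C_{x,R}$ uniformly in $0<r<r_x/R$, while the doubling bound on $\mu$ transfers to a uniform doubling bound on $\nu_r$ on any fixed scale. A Gromov-type precompactness theorem for $(\M,\dGHs)$ (developed for this metric in \cite{mm-tan}) then states that any family with uniformly bounded mass on every ball around the basepoint and uniformly bounded doubling constant is precompact. Applying it gives precompactness of $\{T_r(\mu,x):0<r<r_0\}$ for every $r_0\le r_x$; the case of general $r_0>0$ follows because only the small-scale tail is needed for precompactness in $\dGHs$.

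\textbf{Step 3: Non-emptiness, compactness, uniform approach.} Pick any sequence $r_k\to 0$; Step~2 yields a $\dGHs$-convergent subsequence, whose limit lies in $\Tan(\mu,x)$, so $\Tan(\mu,x)\neq\emptyset$. It is closed in the precompact set $\overline{\{T_r(\mu,x):0<r<r_0\}}$ (a diagonal argument shows that a $\dGHs$-limit of tangents is itself a tangent), hence compact. For \eqref{contra}, suppose for contradiction that there exist $\delta>0$ and $r_k\to 0$ with $\dGHs(T_{r_k}(\mu,x),\Tan(\mu,x))>\delta$; extracting a $\dGHs$-convergent subsequence by Step~2 produces a limit $\xi\in\Tan(\mu,x)$, giving $\dGHs(T_{r_k}(\mu,x),\xi)\to 0$, a contradiction.

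\textbf{Main obstacle.} The only substantive ingredient is the Gromov-style precompactness theorem for $(\M,\dGHs)$ used in Step~2: one must verify that pointwise control of the rescaled masses and of the doubling constants actually upgrades to precompactness in the Kantorovich--Rubinstein-type metric $\dGHs$. This is where the fine structure of $\dKR^{L,r}_x$ in \Cref{dKR}, and the way $\dGHs$ infimises it over isometric embeddings, interacts with the usual covering arguments from Gromov compactness. The remaining arguments (Egorov reduction to uniform doubling, extraction of subsequences, the contradiction for \eqref{contra}) are formal once this compactness criterion is available.
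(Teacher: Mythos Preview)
The paper does not give its own proof of this proposition: it is quoted verbatim as \emph{Proposition 5.3} of \cite{mm-tan} and simply invoked thereafter. So there is no in-paper argument to compare your proposal against; both you and the paper ultimately defer the substantive content to \cite{mm-tan}.

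That said, your sketch is a faithful reconstruction of how such a result is proved. The three-step structure (pointwise uniform doubling, Gromov-type precompactness in $(\M,\dGHs)$, then formal consequences) is exactly the standard route, and you correctly isolate the only real work as the precompactness criterion for $\dGHs$. Two small remarks: first, Egorov is not needed in Step~1 --- the definition of asymptotic doubling already gives, for each individual $x$, constants $C_x,r_x$ with $\mu(B(x,2r))\le C_x\mu(B(x,r))$ for $0<r<r_x$; uniformity over a set of $x$'s is irrelevant since the conclusion is pointwise. Second, your dismissal of the ``general $r_0$'' case is a little quick: for $r$ ranging over a compact interval bounded away from $0$ one should argue separately (e.g.\ via continuity of $r\mapsto T_r(\mu,x)$, cf.\ \Cref{scale-cts}) rather than appeal to the small-scale tail alone. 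Neither point affects the overall correctness of the outline.
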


By the Lebesgue density theorem, tangents also pass to subsets.
\begin{lemma}[\cite{mm-tan} Corollary 5.7]
	\label{tangent-density-equal}
	Let $(\mu,d,x) \in \M$ and let $S\subset \spt\mu$ be $\mu$-measurable.
	Then for $\mu$-a.e. $x\in S$,
	\[\Tan(\mu\vert_S,x) = \Tan(\mu,x).\]
\end{lemma}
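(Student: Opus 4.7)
The plan is to show that, at points $x\in S$ which are Lebesgue density points of $S$ with respect to $\mu$, the rescaled pointed measures $T_r(\mu,x)$ and $T_r(\mu|_S,x)$ become arbitrarily close in $\dGHs$ as $r\to 0$. Since $\Tan$ is defined via accumulation points along sequences $r_k\to 0$, this closeness transfers directly to equality of tangent sets.

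First, I would restrict attention to the full-$\mu$-measure subset of $S$ on which \Cref{tangents_exist} guarantees that $\{T_r(\mu,x)\}_{0<r<r_0}$ is pre-compact. This yields an asymptotic doubling bound $\mu(B(x,Rr))\leq C(R,x)\mu(B(x,r))$ for small $r>0$ and each fixed $R>0$. With this doubling in hand, the Lebesgue differentiation theorem for $\mu$ applies locally at $x$, and hence, thinning by a $\mu$-null set,
\[\lim_{r\to 0}\frac{\mu(S\cap B(x,r))}{\mu(B(x,r))}=1\quad\text{for $\mu$-a.e. }x\in S.\]

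Second, I would bound $\dGHs(T_r(\mu,x),T_r(\mu|_S,x))$ using the common isometric embedding into the pointed space $(X,d/r,x)$. Writing $\alpha_r=\mu(B(x,r))$ and $\beta_r=\mu(S\cap B(x,r))$, for any $L$-Lipschitz $g\colon (X,d/r)\to[-1,1]$ with $(d/r)$-support in $B(x,R)$,
\[\left|\int g\,\mathrm{d}\!\left(\tfrac{\mu}{\alpha_r}\right)-\int g\,\mathrm{d}\!\left(\tfrac{\mu|_S}{\beta_r}\right)\right|\leq \left(\tfrac{1}{\beta_r}-\tfrac{1}{\alpha_r}\right)\mu(B(x,rR)) + \tfrac{\mu(B(x,rR)\setminus S)}{\alpha_r}.\]
Rewriting each term as a product of a factor $\mu(B(x,rR))/\mu(B(x,r))$ (bounded by asymptotic doubling) and a density ratio of the form $\mu(\cdot\setminus S)/\mu(\cdot)$ (vanishing as $r\to 0$) shows that the right-hand side tends to $0$. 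Hence for each fixed $L,R$, $\dKR_x^{L,R}(T_r(\mu,x),T_r(\mu|_S,x))\to 0$, giving $\dKR_x\to 0$ and therefore $\dGHs(T_r(\mu,x),T_r(\mu|_S,x))\to 0$.

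Finally, if $r_k\to 0$ is any sequence realising $T_{r_k}(\mu,x)\to\nu$ in $\dGHs$, the triangle inequality gives $T_{r_k}(\mu|_S,x)\to\nu$ too, so $\nu\in\Tan(\mu|_S,x)$; the symmetric argument gives the reverse inclusion. The main obstacle I anticipate is the bookkeeping in the second step: the test function $g$ sees $\mu$-mass out to radius $rR$ in the original metric, while the two measures are normalised at radius $r$, and reconciling this mismatch is what forces the use of the asymptotic doubling consequence of \Cref{tangents_exist}. Outside that precompactness set the ratio $\mu(B(x,rR))/\mu(B(x,r))$ could blow up and the estimate would fail.
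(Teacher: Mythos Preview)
The paper does not give a proof of this lemma; it is quoted as \cite[Corollary 5.7]{mm-tan} and only the sentence ``By the Lebesgue density theorem, tangents also pass to subsets'' is offered as justification. Your approach---showing that $\dGHs(T_r(\mu,x),T_r(\mu|_S,x))\to 0$ at density points of $S$ via the common embedding into $(X,d/r,x)$, then transferring accumulation points by the triangle inequality---is exactly the standard argument behind that one-line citation, and your $\dKR^{L,R}$ estimate is correct.

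There is one logical slip in how you set things up. You invoke \Cref{tangents_exist} to obtain, for $\mu$-a.e.\ $x$, pre-compactness of $\{T_r(\mu,x)\}$, and then deduce an asymptotic doubling bound from that pre-compactness. But \Cref{tangents_exist} has asymptotic doubling as a \emph{hypothesis}, not a conclusion; you cannot appeal to it without already knowing $\mu$ is asymptotically doubling. The lemma as stated in the paper carries no such hypothesis, so as written your first step does not apply. The fix is simply to assume asymptotic doubling from the outset (this is the standing assumption in \cite{mm-tan} around Corollary~5.7, and in this paper the lemma is only ever applied to regular measures, which are asymptotically doubling). Once that hypothesis is in place, the Lebesgue density theorem holds, the ratio $\mu(B(x,rR))/\mu(B(x,r))$ is controlled for small $r$, and the rest of your argument goes through unchanged.
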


A near identical argument to \cite[Lemma 5.6]{mm-tan} gives the following.
\begin{lemma}
	\label{tan-of-unif}
	Let $(X,d,\mu)$ be a metric measure space and suppose, for $\mu$-a.e. $x\in X$,
	\[0<\th(\mu,x) \quad \text{and} \quad \th(\mu,x)<\infty.\]
	Then for $\mu$-a.e. $x\in X$, every $(\nu,y)\in\Tan(\mu,x)$ and every $z\in\spt \nu$,
	\[\frac{\th(\mu,x)}{\th(\mu,x)}
	\leq \frac{\nu(B(z,r))}{r}
	\leq \frac{\th(\mu,x)}{\th(\mu,x)}.\]
	In particular, if $\mu$ is regular, tangent spaces are regular.
\end{lemma}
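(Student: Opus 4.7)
The plan is to mirror the argument of \cite[Lemma 5.6]{mm-tan}. Once the density exists \muae the displayed double inequality collapses to $\nu(B(z,r))=r$, so the goal is to show that at \muae $x$ every tangent is uniform, which is a fortiori regular and yields the ``in particular'' clause.

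First I would apply Egorov's theorem to produce compact subsets $K$ of $\{0<\th(\mu,\cdot)<\infty\}$ of arbitrarily large measure on which $\mu(B(\cdot,r))/(2r)$ converges uniformly to $\th(\mu,\cdot)$ as $r\to 0$, and on which $\th(\mu,\cdot)$ is continuous. By \Cref{tangent-density-equal} it suffices to prove the claim for tangents of $\mu|_K$. Fix a $\mu|_K$-density point $x\in K$ and a tangent $(\nu,\rho,y)$ realised as $T_{r_k}(\mu|_K,d,x)\to(\nu,\rho,y)$ in $\dGHs$. Unpacking \Cref{dGHs}, for each $k$ I would fix pointed isometric embeddings of both sides into a common pointed space witnessing the infimum up to an error $1/k$.

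Given $z\in\spt\nu$ and $r>0$, the $\dKR$-convergence of the pushed-forward measures in the common ambient spaces produces approximating points $z_k\in\spt(\mu|_K)\subseteq K$ with $d(z_k,x)/r_k\to\rho(z,y)$. Testing $\dKR$ against Lipschitz bump functions sandwiching $\mathbf 1_{B(z,r)}$ shows that, for all but countably many $r$,
\[
\nu(B(z,r)) \;=\; \lim_{k\to\infty}\frac{\mu|_K(B(z_k,rr_k))}{\mu|_K(B(x,r_k))}.
\]
By the uniform density convergence and the continuity of $\th$ on $K$, together with $z_k\to x$ in $d$, the numerator and denominator are asymptotic to $2\th(\mu,z_k)\cdot rr_k$ and $2\th(\mu,x)\cdot r_k$ respectively, and the ratio tends to $r$. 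Monotonicity then extends the equality to every $r$.

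The hardest step, and the only real departure from \cite[Lemma 5.6]{mm-tan}, is arranging the approximating points $z_k$ inside the restricted set $K$: a priori the $\dKR$ convergence only produces $z_k\in\spt\mu$. The way around this is to observe that $\mu|_K$ is asymptotically doubling (being comparable to $\H|_K$ via the uniform density bounds on $K$), so at $\mu$-density points $x$ of $K$ any point of $\spt\mu$ near $x$ is within $o(r_k)$ of $\spt(\mu|_K)$ in the $d/r_k$-scaled metric, and one may push $z_k$ into $K$ without changing the limit.
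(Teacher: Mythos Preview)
Your approach is exactly the paper's: the paper gives no proof and simply says the argument is ``near identical'' to \cite[Lemma~5.6]{mm-tan}, which is precisely the argument you sketch (Egorov, restriction via \Cref{tangent-density-equal}, approximating $z$ by $z_k$, bump-function testing to recover $\nu(B(z,r))$ as a limit of measure ratios). The sketch is correct.

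One organisational remark on your final paragraph: it is slightly at odds with what you already wrote. Once you have invoked \Cref{tangent-density-equal} and are taking tangents of $\mu|_K$, the approximants $z_k$ lie in $\spt(\mu|_K)\subset K$ automatically---you said so yourself two paragraphs earlier---so there is nothing to ``push into $K$'' and this is not a departure from \cite[Lemma~5.6]{mm-tan}. The genuine residual point is rather that your numerator is $\mu|_K(B(z_k,rr_k))$ while Egorov controls $\mu(B(\cdot,r))/(2r)$; comparing the two is where the density-point property of $x$ in $K$ is used. Your last paragraph supplies exactly this, so the mathematics is fine; only the description of what is being repaired is off.
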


Tangents of rectifiable sets are isometric to $\R$.
\begin{theorem}[Theorem 1.1 \cite{mm-tan}]\label{tan_pu}
	If $E\subset X$ is 1-rectifiable, then for $\H$-a.e. $x\in E$, $\Tan(E,\H,x) = \{(\R,\H/2)\}$.
\end{theorem}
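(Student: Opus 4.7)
The plan is to combine Kirchheim's density theorem (\Cref{thm:kirchheim}) with his strengthening of \Cref{def:rectifiable} to $(1+\epsilon)$-biLipschitz pieces, and then show that at a Lebesgue density point of such a piece the rescaled parametrisation converges in $\dGHs$ to $(\R, \H/2, 0)$. The normalising factor $1/2$ appears because \Cref{thm:kirchheim} forces $\th(\H,x)=1$ $\H$-a.e., so balls of radius $L$ in any tangent have mass $L$, matching $(\R,\H/2)$.

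For the reduction, for each $n\in\N$ decompose $E = N_n \cup \bigcup_i \gamma^n_i(A^n_i)$ with $\H(N_n)=0$ and each $\gamma^n_i\colon A^n_i\to X$ being $(1+1/n)$-biLipschitz (Kirchheim's strengthening). Let $F$ be the set of $x\in E\setminus\bigcup_n N_n$ at which, for every $n$ and for the piece $(\gamma,A)=(\gamma^n_i,A^n_i)$ of the $n$th decomposition containing $x$: $\th(\H,x)=1$ (\Cref{thm:kirchheim}); $\Tan(\H,x)=\Tan(\H|_{\gamma(A)},x)$ (\Cref{tangent-density-equal} applied with $\gamma(A)\subset E$ of full $\H$-density at $x$); and $a:=\gamma^{-1}(x)$ is a Lebesgue density point of $A\subset\R$, which forces $\th(\H|_{\gamma(A)},x)=1$ by a routine biLipschitz computation. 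Each condition excludes an $\H$-null set, so $\H(E\setminus F)=0$.

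I then show that for $x\in F$, every $n$, and every $\nu\in\Tan(\H|_{\gamma(A)},x)$, $\dGHs(\nu,(\R,\H/2,0))\leq C/n$ with $C$ absolute. Set $A_r=(A-a)/r$ and $\gamma_r(t)=\gamma(a+rt)\in X_r:=(X,d/r)$; then $\gamma_r$ remains $(1+1/n)$-biLipschitz, $\L(B_\R(0,L)\setminus A_r)\to 0$ for each $L$ by the density point property, and the normalised measure $\mu_r:=\H|_{\gamma(A)}/\H|_{\gamma(A)}(B(x,r))$ has mass tending to $L$ on $B_{X_r}(x,L)$. Glue $\R$ and $X_r$ into a common pointed metric space $Z$ by identifying each $t\in A_r$ with $\gamma_r(t)$, after inflating the $\R$-metric by $1+1/n$ so that shortcuts through $X_r$ do not contract $\R$-distances. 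A change of variables through $\gamma_r$ then bounds the KR-pairing on $Z$ between $\H/2$ and $\mu_r$ by $O(1/n)$ from the biLipschitz distortion plus $o_r(1)$ from $\L(B_\R(0,L)\setminus A_r)$. Letting $r\to 0$ and then intersecting the resulting bound over all $n$ pins down $\Tan(\H|_{\gamma(A)},x)=\{(\R,\H/2,0)\}$, which equals $\Tan(\H,x)$ by the second condition defining $F$.

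The delicate step is the gluing: without the inflation by $1+1/n$, the identification $t\sim\gamma_r(t)$ yields only a pseudo-metric on $\R\sqcup X_r$ which can violate the triangle inequality for two $\R$-points whose shortest route passes through $X_r$ (the obstruction being that $\gamma_r$ is only $(1+1/n)$-biLipschitz, not isometric). Once the inflation is in place, the biLipschitz distortion of $\gamma_r$ and the Lebesgue density defect of $A_r$ contribute independently to the KR pairing, producing the stated $O(1/n)+o_r(1)$ error and hence the uniform $\dGHs$ bound.
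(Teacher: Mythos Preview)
The paper does not contain a proof of this statement: it is quoted verbatim as Theorem~1.1 of \cite{mm-tan} and used as a black box. So there is nothing in the present paper to compare your argument against.

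That said, your outline is the natural self-contained route and is essentially how such results are proved: pass to $(1+1/n)$-biLipschitz pieces via Kirchheim, localise with \Cref{tangent-density-equal} and \Cref{thm:kirchheim}, and then build a common ambient space $Z$ in which the rescaled piece and $\R$ sit $O(1/n)$-close. Two points deserve more care than you give them. First, the gluing: ``identify $t$ with $\gamma_r(t)$ after inflating the $\R$-metric by $1+1/n$'' is the right idea, but you should write down the metric on $Z=\R\sqcup X_r$ explicitly, e.g.\ $d_Z(s,y)=\inf_{t\in A_r}\bigl[(1+1/n)|s-t|+d_{X_r}(\gamma_r(t),y)\bigr]+1/n$, and actually verify the triangle inequality across the two pieces; the additive $1/n$ (or some small positive constant) is what makes this a genuine metric rather than a pseudo-metric, and it also gives the pointed embedding required by \Cref{dGHs}. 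Second, to conclude $\dGHs(\nu,(\R,\H/2,0))\le C/n$ for \emph{every} $\nu\in\Tan(\H|_{\gamma(A)},x)$ you need the $\dKR$-estimate to hold for \emph{all} sufficiently small $r$, not merely along a subsequence, so the $o_r(1)$ terms coming from the density-point defect and the normalisation $\H|_{\gamma(A)}(B(x,r))/2r\to 1$ must be controlled uniformly in $r$; this is routine but should be stated. With those two clarifications the argument goes through.
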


In \cite[Theorem 1.1]{mm-tan} the converse statement is proven (in any dimension).
In fact, it suffices that the tangent spaces are supported on bi-Lipschitz images of Euclidean space.
For 1-dimensional metric spaces, it suffices that all tangent measures are supported on connected metric spaces \cite[Theorem 1.1]{con-tan}.
We record the following corollary.
\begin{theorem}\label{R_tan_rect}
	Suppose that for  $\H$-a.e.\ $x\in X$, $\th_*(\H,x)>0$ and \[\Tan(X,\H,x)=\{(\R,\H/2)\}.\]
        Then $X$ is 1-rectifiable.
\end{theorem}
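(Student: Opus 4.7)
The plan is to reduce the statement to one of the two cited characterisations of 1-rectifiability, namely \cite[Theorem 1.1]{mm-tan} or \cite[Theorem 1.1]{con-tan}, both of which take as hypothesis a statement about tangent spaces being ``almost Euclidean.'' Since our hypothesis says the unique tangent at almost every point is $(\R,\H/2)$, and $\R$ is simultaneously (a) isometric to $\R$ itself, hence trivially a bi-Lipschitz image of Euclidean $\R^1$, and (b) connected, the hypotheses of both cited theorems are satisfied at the same set of points.

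First I would verify that the measure-theoretic setup of \cite{mm-tan}/\cite{con-tan} is available here. The condition $\th_*(\H,x)>0$ at $\H$-a.e.\ $x$, combined with the fact that every element of $\Tan(X,\H,x)$ equals $(\R,\H/2)$ and is therefore uniform, forces the pair $(X,\H)$ to be asymptotically doubling on a full-measure subset. This is essentially \Cref{tan-of-unif} applied locally: upper and lower densities coincide from the tangent structure, giving at the very least a comparison $\H(B(x,2r))\lesssim \H(B(x,r))$ for small $r$ on a full-measure Borel set. Consequently tangent measures exist in the sense of \Cref{tangents_exist} (which the hypothesis already assumes) and \Cref{tangent-density-equal} applies, allowing a reduction to the $\sigma$-finite case by a standard exhaustion of $X$ by sets of finite $\H$-measure where the tangent condition persists when restricted.

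Once the setup is secured, the argument is a one-line application: the assumption $\Tan(X,\H,x)=\{(\R,\H/2)\}$ says exactly that every tangent is supported on a connected set (indeed on a line), so \cite[Theorem 1.1]{con-tan} immediately yields that $X$ is 1-rectifiable. Alternatively, viewing $\R$ as the bi-Lipschitz image of Euclidean $\R^1$ under the identity, \cite[Theorem 1.1]{mm-tan} applies in dimension $n=1$ and gives the same conclusion.

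The main obstacle, in writing the proof, is merely bookkeeping: one must be careful that the precise form of the tangent-space hypothesis used in the cited papers matches the normalisation given here (the factor $\H/2$ on $\R$, the use of $\dGHs$ rather than weak-$*$ convergence of measures, and the switch from ``$\th$ exists'' to ``$\th_*>0$''). Because the cited theorems are formulated in exactly the framework of \Cref{def:gh-tangent} and \Cref{tangents_exist} recalled earlier in this section, this compatibility check is essentially automatic, and no new geometric input is required beyond noting that a single isometry class of tangents — being both connected and flat — satisfies either sufficient condition.
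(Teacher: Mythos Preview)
Your proposal is correct and matches the paper's approach: the paper does not give a separate proof of this theorem at all, but merely records it as an immediate corollary of \cite[Theorem~1.1]{mm-tan} (tangents are bi-Lipschitz images of Euclidean space) and \cite[Theorem~1.1]{con-tan} (tangents are connected), exactly as you do. Your additional bookkeeping about asymptotic doubling and normalisations is reasonable caution but is not needed beyond what the cited theorems already absorb.
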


We require an adaptation of Preiss's classical ``tangents of tangents are tangents''.
\begin{lemma}
	\label{tan-of-tan}
	For $(X,d,\mu,x)\in \M$ suppose that
        \[\Psi:=\lim_{\epsilon\to 0} \liminf_{r\to 0} \frac{\mu(B(x,r))}{\mu(B(x,r+\epsilon))}>0.\]
        For any $(Y,\rho,\nu,y)\in \Tan(X,d,\mu,x)$ and $s>0$, there exists $\Phi$ satisfying $\Psi\leq \Phi\leq 1$ such that
	\begin{equation*}
		\Phi T_s(\nu,\rho,y):=
		\left(\frac{\Phi\nu}{\nu(B(y,s))},\frac{\rho}{s},y\right) \in \Tan(X,d,\mu,x).
	\end{equation*}
	In particular, if $x$ satisfies \eqref{con_subseq},
	\begin{equation*}
		\Tan(\nu,\rho,y)\subset [1,\Psi^{-1}]\Tan(\mu,d,x).
	\end{equation*}
\end{lemma}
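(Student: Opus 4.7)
The plan is to find a subsequence that realises $\Phi T_s(\nu,\rho,y)$ as a tangent of $\mu$. Starting from $r_k\to 0$ with $T_{r_k}(\mu,d,x)\to(\nu,\rho,y)$ in $\dGHs$, I would write
\[T_{sr_k}(\mu,d,x)\;=\;c_k\cdot\bigl(\mu/\mu(B(x,r_k)),\ d/(sr_k),\ x\bigr),\qquad c_k:=\frac{\mu(B(x,r_k))}{\mu(B(x,sr_k))}.\]
The factor in parentheses is the metric dilation of $T_{r_k}(\mu,d,x)$ by $1/s$, so by continuity of dilation in $\dGHs$ it converges to $(\nu,\rho/s,y)$; everything thus reduces to controlling the scalar $c_k$.

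The sequence $(c_k)$ is bounded thanks to the $\Psi$-hypothesis. For $s>1$, any fixed $\epsilon_0>0$ satisfies $(s-1)r_k<\epsilon_0$ for large $k$, whence $c_k\geq\mu(B(x,r_k))/\mu(B(x,r_k+\epsilon_0))$; taking $\liminf_k$ and then $\sup_{\epsilon_0>0}$ yields $\liminf_k c_k\geq\Psi$, while $c_k\leq 1$ is automatic. The case $s\leq 1$ is symmetric, with $c_k\geq 1$ automatic and $\limsup_k c_k\leq 1/\Psi$ obtained by the same trick applied to the denominator. Pass to a subsequence along which $c_k\to c$; then $T_{sr_k}(\mu,d,x)\to c(\nu,\rho/s,y)$, which equals $\Phi T_s(\nu,\rho,y)$ for $\Phi:=c\,\nu(B(y,s))$, and this limit lies in $\Tan(\mu,d,x)$ by definition.

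The bulk of the work is checking $\Phi\in[\Psi,1]$. The upper bound is immediate from Portmanteau on open balls: $\nu(B(y,s))\leq\liminf_k\mu(B(x,sr_k))/\mu(B(x,r_k))=1/c$, so $\Phi\leq 1$. The lower bound requires first an auxiliary claim that $\nu$ inherits the $\Psi$-continuity, namely $\nu(B(y,s))\geq\Psi\,\nu(\B(y,s))$ for every $s>0$. To prove this I would fix $\epsilon>0$ and apply the $\Psi$-hypothesis of $\mu$ at scale $(s-\epsilon)r_k$ with increment $2\epsilon r_k\to 0$ to get $\liminf_k\mu(B(x,(s-\epsilon)r_k))/\mu(B(x,(s+\epsilon)r_k))\geq\Psi$; combined with Portmanteau (closed ball below, open ball above) this gives $\nu(\B(y,s-\epsilon))\geq\Psi\,\nu(B(y,s+\epsilon))$, and letting $\epsilon\to 0^+$ the two sides converge monotonically to $\nu(B(y,s))$ and $\Psi\,\nu(\B(y,s))$ respectively by the inner/outer regularity of $\nu$ on nested balls. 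Once this is in hand, Portmanteau for the closed ball gives $\limsup_k\mu(B(x,sr_k))/\mu(B(x,r_k))\leq\nu(\B(y,s))$, so $c\geq 1/\nu(\B(y,s))$ and $\Phi=c\,\nu(B(y,s))\geq\nu(B(y,s))/\nu(\B(y,s))\geq\Psi$.

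For the ``in particular'' clause, given $(Z,\sigma,z)\in\Tan(\nu,\rho,y)$ realised by some $s_j\to 0$, I would apply the first part at each $s_j$ to obtain $\Phi_j\in[\Psi,1]$ with $\Phi_j T_{s_j}(\nu,\rho,y)\in\Tan(\mu,d,x)$, pass to a subsequence with $\Phi_j\to\Phi\in[\Psi,1]$, and invoke closedness of $\Tan(\mu,d,x)$ (from \eqref{con_subseq}) to conclude $\Phi(Z,\sigma,z)\in\Tan(\mu,d,x)$, giving $(Z,\sigma,z)\in\Phi^{-1}\Tan(\mu,d,x)\subset[1,\Psi^{-1}]\,\Tan(\mu,d,x)$. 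The main obstacle is the inheritance claim $\nu(B(y,s))\geq\Psi\,\nu(\B(y,s))$, since it forces one to juggle the two opposing Portmanteau inequalities (open lower, closed upper) while simultaneously taking the $\liminf_k$ over the blow-up scale and the $\epsilon_0\to 0$ limit built into the definition of $\Psi$.
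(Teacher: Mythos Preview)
Your approach is essentially the same as the paper's: pick $r_k\to 0$ realising the tangent, rewrite $T_{sr_k}(\mu,d,x)$ as a scalar multiple of the metric $1/s$-dilation of $T_{r_k}(\mu,d,x)$, use continuity of dilation to pass to the limit, and extract a convergent subsequence of the scalar to identify $\Phi$. The paper's proof is considerably terser---it simply asserts the two-sided bound $\Psi\leq\Phi\leq 1$ on the scalar $\mu(B(x,r_is))/(\nu(B(y,s))\mu(B(x,r_i)))$ ``for sufficiently large $i$'' without isolating the sphere-mass issue---whereas you correctly recognise that the lower bound requires the intermediate inequality $\nu(B(y,s))\geq\Psi\,\nu(\B(y,s))$ and supply a proof of it via Portmanteau and the $\Psi$-hypothesis; this extra care is warranted and does not change the underlying strategy.
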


\begin{proof}
	Let $r_i\to 0$ such that $T_{r_i}(\mu,d,x)\to (\nu,\rho,y)$.
	Then
	\begin{equation*}
		\left( \frac{\mu(B(x,r_is))}{\nu(B(y,s)) \mu(B(x,r_i))}
		\frac{\mu}{\mu(B(x,r_is))},
		\frac{d}{r_is},x\right)
		\to T_s(\nu,\rho,y)
	\end{equation*}
	For any $\epsilon>0$,
	\begin{equation*}
	1\geq \frac{\mu(B(x,r_is))}{\nu(B(y,s)) \mu(B(x,r_i))}
		\geq \frac{\mu(B(x,r_i))}{\mu(B(x,r_i+\epsilon))}
	\end{equation*}
	for sufficiently large $i$.
	Therefore, by passing to a further subsequence, we may suppose that the inner quantity converges to some $\Phi$ satisfying the required condition.
	
	For any $s_i\to 0$, there exists $\Phi_i\in [1,\Psi^{-1}]$ such that $\Phi_i T_{s_i}(\nu,\rho,y) \in \Tan(\mu,d,x)$.
	Under the additional assumption, $\Tan(X,d,\mu,x)$ is compact and hence contains all limit points.
\end{proof}

\Cref{tangents_exist} shows that the set of tangent spaces is compact.
We conclude the preliminaries by showing that it is also connected.

\begin{lemma}
	\label{scale-cts}
	For any $(X,d,\mu,x)\in \M$, the map $\lambda \mapsto (X,\lambda d,\mu,x)$ is continuous.
\end{lemma}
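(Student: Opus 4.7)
The plan is to construct, for $\lambda$ near $\lambda_0$, a common pointed ambient space into which both $(X,\lambda d,x)$ and $(X,\lambda_0 d,x)$ embed isometrically with corresponding points close, and then bound $\dKR_z$ directly. For this I would use the Kuratowski-type embedding
\[\kappa_\lambda \colon (X, \lambda d) \to L^\infty(X), \qquad \kappa_\lambda(a)(b) := \lambda\bigl(d(a, b) - d(x, b)\bigr).\]
This is isometric and sends $x$ to the zero function, and the reverse triangle inequality gives $\sup_b |d(a,b) - d(x,b)| = d(a,x)$, yielding the key displacement estimate
\[\|\kappa_\lambda(a) - \kappa_{\lambda_0}(a)\|_\infty = |\lambda - \lambda_0|\,d(a, x).\]
Taking $Z := L^\infty(X)$ with $z := 0$ thus provides a common pointed ambient space with explicit, controlled displacement between the two copies of each $a \in X$.

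Given $\tau \in (0, 1/2)$, it suffices to find $\eta \in (0,\tau)$ with $\dKR_0^{1/\eta, 1/\eta}(\kappa_\lambda \mu, \kappa_{\lambda_0}\mu) < \eta$ for $\lambda$ close to $\lambda_0$. Since $\mu$ is locally finite, at most countably many spheres centred at $x$ carry positive $\mu$-mass, so I can pick $\eta \in (\tau/2, \tau)$ with $\mu(\partial B_d(x, 1/(\eta\lambda_0))) = 0$. Setting $L := r := 1/\eta$, for any $L$-Lipschitz $g \colon Z \to [-1,1]$ with $\spt g \subset B_Z(0, r)$, since $\kappa_\lambda$ is isometric and $\kappa_\lambda(x)=0$ we have $\spt(g\circ\kappa_\lambda) \subset B_d(x,r/\lambda)$ and similarly for $\lambda_0$. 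I would split
\[\int g\d(\kappa_\lambda\mu - \kappa_{\lambda_0}\mu) = \int_X\bigl(g\circ\kappa_\lambda - g\circ\kappa_{\lambda_0}\bigr)\d\mu\]
into contributions from the common-support ball $U_\lambda := B_d(x, r/\max(\lambda, \lambda_0))$ and the ring $V_\lambda := B_d(x, r/\min(\lambda, \lambda_0)) \setminus U_\lambda$. On $U_\lambda$ the $L$-Lipschitz property combined with the displacement estimate bounds the integrand pointwise by $L|\lambda - \lambda_0|r/\max(\lambda, \lambda_0)$, producing an $O(|\lambda-\lambda_0|)$ contribution; on $V_\lambda$ the crude bound $|g|\leq 1$ gives a contribution of at most $\mu(V_\lambda)$.

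The main obstacle will be showing $\mu(V_\lambda) \to 0$. For $\lambda \downarrow \lambda_0$ the $V_\lambda$ form a nested decreasing family with empty intersection, so continuity from above of the locally finite $\mu$ gives $\mu(V_\lambda) \to 0$ immediately. For $\lambda \uparrow \lambda_0$ the decreasing intersection equals $\partial B_d(x, 1/(\eta\lambda_0))$, which has $\mu$-measure zero by the careful choice of $\eta$; the same continuity argument then applies. Combining both contributions, for $\lambda$ sufficiently close to $\lambda_0$ the total integral is less than $\eta$, whence $\dKR_0^{1/\eta, 1/\eta}(\kappa_\lambda\mu,\kappa_{\lambda_0}\mu) < \eta$, giving $\dGHs((X, \lambda d, \mu, x), (X, \lambda_0 d, \mu, x)) \leq \dKR_0(\kappa_\lambda\mu, \kappa_{\lambda_0}\mu) < \eta < \tau$. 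This establishes continuity of the scaling map at $\lambda_0$.
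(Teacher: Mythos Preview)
Your proof is correct and takes a genuinely different route from the paper. The paper builds an ad-hoc metric on the disjoint union $X\sqcup X'$ (carrying $\lambda d$ and $\lambda_0 d$ respectively) and then invokes a gluing lemma to identify the two basepoints, obtaining a coupling in which corresponding points satisfy the \emph{uniform} bound $\zeta'(y,y')\leq 2|\lambda-\lambda_0|$, independent of $d(y,x)$. This yields the one-line estimate $\dKR_z^{L,r}(\mu,\mu')\leq 2L|\lambda-\lambda_0|\,\mu(B(x,r))$ with no annulus to handle. Your Kuratowski embedding is more canonical and matches the basepoints automatically, but the displacement $\|\kappa_\lambda(a)-\kappa_{\lambda_0}(a)\|_\infty=|\lambda-\lambda_0|\,d(a,x)$ grows with $d(a,x)$, which is what drives your annulus $V_\lambda$ and the sphere-avoidance choice of $\eta$. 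In fact that extra work can be dropped: since $g$ is globally $L$-Lipschitz, the pointwise bound $|g\circ\kappa_\lambda-g\circ\kappa_{\lambda_0}|\leq L|\lambda-\lambda_0|\,d(\cdot,x)$ holds on all of $X$, and the integrand is supported in $B_d\bigl(x,r/\min(\lambda,\lambda_0)\bigr)$, giving directly a bound of order $L|\lambda-\lambda_0|\cdot\tfrac{r}{\min(\lambda,\lambda_0)}\cdot\mu\bigl(B_d(x,\tfrac{r}{\min(\lambda,\lambda_0)})\bigr)\to 0$. Either way your argument goes through; the paper's construction simply trades a less standard coupling for a shorter estimate.
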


\begin{proof}
	Let $\lambda,\lambda'>0$ and define the metric space $Z$ as the disjoint union of two copies of $X$, $Z=X \sqcup X'$, equipped with the metric $\widetilde\zeta$ defined to equal $\lambda d$ on $X$, $\lambda' d$ on $X'$ and
	\begin{equation*}
		\zeta(y,y') = \inf_{y\in X} \lambda d(y,w) + |\lambda-\lambda'| + \lambda' d(w,y')
	\end{equation*}
	whenever $y\in X$ and $y'\in X'$.
	(The triangle inequality for $\zeta$ is easily verified.)
	By applying \cite[Lemma 2.22]{mm-tan}, we obtain a metric space $(Z',\zeta')$ and isometric embeddings of $(X,\lambda d)$ and $(X',\lambda 'd)$ into $Z'$ such that $\zeta'\leq \zeta +\zeta(x,x')$ on $X\cup X'$ (for $x'$ the copy of $x$ in $X'$) mapping $x$ and $x'$ to a common point.
	In particular, for any $y\in X$, if $y'$ is its copy in $X'$ then $\zeta(y,y')\leq 2|\lambda-\lambda'|$.

	Now let $g\colon Z'\to [-1,1]$ be $L$-Lipschitz with $\spt g\subset B(x,r)$.
	Then
	\begin{equation*}
		|g(y)-g(y')|\leq L \zeta'(y,y')| \leq 2L|\lambda-\lambda'|
	\end{equation*}
	and so, if $\mu'$ denotes the copy of $\mu$ in $X'$,
	\begin{equation*}
		\dKR_z^{L,r}(\mu,\mu') \leq 2L|\lambda -\lambda'| \mu(B(x,r)).
	\end{equation*}
	Hence the map in the statement is continuous.
\end{proof}

\begin{corollary}\label{tan_is_con}
	Let $(X,d,\mu,x)\in\M$ be such that $\{T_r(X,d,\mu,x): 0<r<r_0 \}$ is pre-compact for some $r_0>0$.
	Then for any $\delta>0$ and $T_1,T_2 \in \Tan(\mu,x)$, there exists $0<r_1,r_2<r_0$ such that
	\begin{equation}
		\label{close_to_tan_delta}
		T_r(X,d,\mu,x),T_i)<\delta
	\end{equation}
	for $i=1,2$ and
        \begin{equation}
		\label{path-close-to-delta}
		T_r(X,d,\mu,x)\in B(\Tan(\mu,x),\delta)
        \end{equation}
	for each $r\in [r_1,r_2]$ (or $r\in [r_2,r_1]$ if non-empty).
        In particular, $\Tan(X,x)$ is connected.
\end{corollary}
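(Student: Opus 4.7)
The statement splits into constructing $r_1, r_2$ with the stated closeness properties, and then deducing the connectedness of $\Tan(X, x)$. I would handle each in turn.

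\emph{Existence of $r_1, r_2$.} Since each $T_i \in \Tan(\mu, d, x)$, \Cref{def:gh-tangent} furnishes sequences $s_i^k \to 0$ with $\dGHs(T_{s_i^k}(X,d,\mu,x), T_i) \to 0$. The pre-compactness hypothesis triggers \Cref{tangents_exist}, so by \eqref{contra} there is $r_x > 0$ with $\dGHs(T_r, \Tan(\mu, x)) \leq \delta$ for every $0 < r < r_x$. Choose $k$ so large that $s_i^k < \min(r_x, r_0)$ and $\dGHs(T_{s_i^k}, T_i) < \delta$ for $i = 1, 2$, and set $r_i := s_i^k$. Property \eqref{close_to_tan_delta} is then immediate and \eqref{path-close-to-delta} follows from \eqref{contra}, since every $r$ in the interval between $r_1$ and $r_2$ lies in $(0, r_x)$.

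\emph{Reduction for connectedness.} Suppose for contradiction that $\Tan(\mu, x) = A \sqcup B$ for non-empty compact disjoint $A, B$; by compactness $\eta := \dGHs(A, B) > 0$. Fix $\delta \in (0, \eta/3)$, pick $T_A \in A$ and $T_B \in B$, and apply the first part to obtain $r_1 < r_2$ (WLOG) with $T_{r_1} \in B(A, \delta)$, $T_{r_2} \in B(B, \delta)$, and $T_r \in B(A, \delta) \sqcup B(B, \delta)$ for every $r \in [r_1, r_2]$, the two open sets being disjoint since $\delta < \eta/2$. The remaining task is to forbid the map $r \mapsto T_r$ from jumping between these two pieces along the connected interval $[r_1, r_2]$, which requires a sufficient degree of continuity of this map.

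\emph{Intermediate-value argument.} Writing $T_r = (\mu/\mu(B(x, r)), d/r, x)$, the metric-scaling factor $1/r$ yields continuity via \Cref{scale-cts}, while the measure-normalization factor $1/\mu(B(x, r))$ yields continuity via a direct Kantorovich-Rubinstein estimate of the form $\dKR_x^{L, R}(c\mu, c'\mu) \leq |c - c'| \mu(\B(x, R))$. Since $r \mapsto \mu(B(x, r))$ is monotone, it has at most countably many discontinuities, and is always left-continuous; hence so is $r \mapsto T_r$. Let $r^* := \inf\{r \in [r_1, r_2] : T_r \in B(B, \delta)\}$. If $T_{r^*} \in B(B, \delta)$, then taking $r_n \nearrow r^*$ (which satisfy $r_n \in J_A$ by definition of $r^*$, where $J_A$ denotes the preimage of $B(A, \delta)$) left-continuity gives $T_{r^*} \in \overline{B(A, \delta)}$, contradicting $\overline{B(A, \delta)} \cap \overline{B(B, \delta)} = \emptyset$. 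If instead $T_{r^*} \in B(A, \delta)$, I would take $r_n \searrow r^*$ with $T_{r_n} \in B(B, \delta)$; pre-compactness extracts a subsequential limit $T^+ \in \overline{B(B, \delta)}$, and at a continuity point of $\mu(B(x, \cdot))$ this limit equals $T_{r^*} \in B(A, \delta)$, again a contradiction.

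\emph{Main obstacle.} The remaining subcase is when $r^*$ is one of the countably many jump points of $\mu(B(x, \cdot))$. At such an $r^*$, the right limit $T^+$ differs from $T_{r^*}$ by a scalar rescaling of the measure (by the ratio $\mu(B(x,r^*))/\mu(\B(x,r^*)) \in (0,1)$), so a genuine contradiction is not immediate. I expect this to be resolved either by perturbing $\delta$ slightly so that the new $r^*$ falls outside the countable exceptional set, or by exploiting that the one-parameter family $c \mapsto (X, d, c\mu, x)$ is continuous (again by a Kantorovich-Rubinstein estimate) to interpolate between $T_{r^*}$ and $T^+$ inside $\M$ and extract a contradiction with the disjointness of $\overline{B(A, \delta)}$ and $\overline{B(B, \delta)}$ at a suitable intermediate $c$.
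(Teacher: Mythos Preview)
Your overall strategy coincides with the paper's: invoke \eqref{contra} from \Cref{tangents_exist} to produce $r_1,r_2$ with the two displayed properties, and then deduce connectedness by contradiction, using the curve $r\mapsto T_r$ to travel from a neighbourhood of one piece $A$ of a putative splitting $\Tan(\mu,x)=A\sqcup B$ to a neighbourhood of the other. The paper is far terser than you: after recording \eqref{close_to_tan_delta} and \eqref{path-close-to-delta}, it simply writes ``by \Cref{scale-cts}, there exists a continuous curve in $B(\Tan(\mu,x),\delta)$ connecting $T_1$ and $T_2$'' and proceeds to the contradiction.

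You have noticed something the paper glosses over: \Cref{scale-cts} only treats the metric rescaling $d\mapsto d/r$, whereas $T_r$ also involves the measure normalisation $\mu\mapsto \mu/\mu(B(x,r))$, and $r\mapsto \mu(B(x,r))$, being merely monotone, can jump. So the paper's one-line appeal to \Cref{scale-cts} does not, on its face, establish continuity of $r\mapsto T_r$, and your careful case analysis around the infimum $r^*$ is a genuine attempt to repair this.

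That said, your proposed fix (b) does not close the gap. The measure-scaling arc $c\mapsto (c\mu,\,d/r^*,\,x)$ between $T_{r^*}$ and the right limit $T^+$ is indeed continuous in $\M$, but there is no reason for it to remain inside $\overline{B(A,\delta)}\cup\overline{B(B,\delta)}$: it may simply wander out of both neighbourhoods, so connectedness of the arc produces no point in the (empty) intersection $\overline{B(A,\delta)}\cap\overline{B(B,\delta)}$. Likewise, splicing all such arcs into the curve $r\mapsto T_r$ gives a genuinely continuous path in $\M$ from near $T_1$ to near $T_2$, but again the spliced pieces are not known to lie in $B(\Tan(\mu,x),\delta)$, so the disconnection argument still fails. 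Your option (a), perturbing $\delta$, is too vague as stated: the critical point $r^*$ depends on $\delta$ in an uncontrolled way, so there is no obvious mechanism for forcing $r^*$ off the countable set of jumps. In summary, you have put your finger on a real subtlety that the paper's proof also does not address, but your sketch does not yet resolve it.
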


\begin{proof}
	By \cref{tangents_exist}, there exists $R>0$ such that
	\begin{equation*}
		\dGHs(T_r(\mu,x),\Tan(\mu,x)) \leq \delta
	\end{equation*}
	for all $0<r<R$.
	Let $0<r_1,r_2<R$ satisfy \eqref{close_to_tan_delta}, so that \eqref{path-close-to-delta} is also satisfied.
	Therefore, by \cref{scale-cts}, there exists a continuous curve in $B(\Tan(\mu,x),\delta)$ connecting $T_1$ and $T_2$.

	Now suppose that $\Tan(\mu,x)= A_1\cup A_2$ is a disjoint decomposition into closed sets.
	By \cref{tangents_exist}, $\Tan(\mu,x)$ is compact, and hence so are $A_1$ and $A_2$.
	If both are non empty, let $T_i\in A_i$ and $\delta<d(A_1,A_2)/4$.
	Then the first part of the proof constructs a continuous curve from $T_1$ to $T_2$ that lies in $B(\Tan(\mu,x),\delta)$, a contradiction.
\end{proof}

\section{A purely unrectifiable uniform metric measure space}\label{sec:S}
We will consider the following subset of binary sequences with the following metric and measure.
\begin{definition}\label{def:tree}
        Let
        \begin{equation*}
                \S = \{s\in \zo : \exists \ n\in \N \text{ s.t. } s_i = 0 \ \forall \ i \geq n\}
        \end{equation*}
        equipped with the metric
\begin{equation*}
        d(s,t) = \sum_{i\in \Z} 2^{i}|s_i-t_i|
\end{equation*}
and 1-dimensional Hausdorff measure $\H$.
\end{definition}
Note that the metric is well defined on $\S$ and that $\S$ is isometric to the set in \Cref{thm:example-uniform-pu} in the introduction.
All distinguished points in $\S$ are equivalent: for any $x,y\in \S$ there exists an isometry of $\S$ exchanging $x$ and $y$.

We first show that $\S$ is uniform.
\begin{lemma}
        For any $x\in \S$ and $r\geq 0$, $\H(B(x,r)) = r$.
\end{lemma}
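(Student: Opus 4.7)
First, I would reduce to $x=0$ by homogeneity. The coordinate-wise addition mod $2$ map $\tau_x(s)_i := s_i + x_i \pmod 2$ is well-defined on $\S$ (since $s_i = x_i = 0$ for $i$ sufficiently large) and is its own inverse. For each $i$ one has $|\tau_x(s)_i - \tau_x(t)_i| = |s_i - t_i|$, because adding $x_i$ modulo $2$ either fixes both entries (if $x_i=0$) or flips both (if $x_i=1$); hence $\tau_x$ is an isometry of $\S$ sending $0$ to $x$, so $B(x,r) = \tau_x(B(0,r))$ and it suffices to prove $\H(B(0,r)) = r$.

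The central tool will be the map $\phi\colon \S \to [0,\infty)$ given by $\phi(s) = \sum_{i\in\Z} 2^i s_i = d(0,s)$, which is $1$-Lipschitz and satisfies $B(0,r) = \phi^{-1}([0,r))$. For the lower bound, any $y \in [0,r)$ has a binary expansion with finitely many $1$'s in positions $i \geq 0$, yielding a preimage in $B(0,r)$; thus $\phi(B(0,r)) = [0,r)$, and the $1$-Lipschitz property gives
\[\H(B(0,r)) \geq \H([0,r)) = r.\]

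For the upper bound I would exhibit a dyadic cylinder cover. Fix an integer $k$ with $2^k \geq r$; if some $s \in B(0,r)$ satisfied $s_i = 1$ for an index $i \geq k$ then $\phi(s) \geq 2^k \geq r$, a contradiction, so every $s \in B(0,r)$ has $s_i = 0$ for all $i \geq k$. For each $n \in \N$ and each $\sigma \in \{0,1\}^{\{-n,\ldots,k-1\}}$ set
\[C_\sigma := \{s \in \S : s_i = \sigma_i \text{ for } -n \leq i < k,\ s_i = 0 \text{ for } i \geq k\}.\]
Two elements of $C_\sigma$ differ only at indices $i < -n$, so $\diam C_\sigma \leq \sum_{i<-n} 2^i = 2^{-n}$. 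A cylinder $C_\sigma$ meets $B(0,r)$ precisely when $\sum_{-n \leq i < k} 2^i \sigma_i < r$, and such $\sigma$ are in bijection with the multiples of $2^{-n}$ in $[0,r)$, of which there are at most $\lceil r 2^n \rceil \leq r 2^n + 1$. Consequently $\H_{2^{-n}}(B(0,r)) \leq (r 2^n + 1) 2^{-n} = r + 2^{-n}$, and letting $n \to \infty$ gives $\H(B(0,r)) \leq r$.

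The subtle point is that $\phi$ is far from injective: the basis sequence $e_0$ and the tail sum $\sum_{i<0} e_i$ both map to $1$ yet lie at distance $2$ in $\S$. Hence one cannot simply push $\H$ forward through $\phi$ to obtain both bounds, and the upper estimate must be supplied by a separate combinatorial construction. The virtue of the dyadic cylinder cover is that each piece has diameter matching exactly the width $2^{-n}$ of the $\phi$-interval it fills, so no loss appears in the limit.
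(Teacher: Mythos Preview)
Your argument is correct. The reduction to $x=0$ via the coordinate-flip isometry and the lower bound via the $1$-Lipschitz surjection $\phi\colon B(0,r)\to[0,r)$ are exactly what the paper does; you are also right that $\phi$ is not injective (the paper asserts injectivity, but only the image being $[0,r)$ is actually needed, and that is what you use).

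Where you genuinely diverge is in the upper bound. The paper fixes the binary expansion of $r$ (chosen without recurring $1$'s), covers $B(0,2^n)$ by $2^{n-m}$ cubes $S(p)$ of diameter $2^m$, and then covers the ``shell'' $B(0,r)\setminus B(0,2^n)$ by sets $S_i$ attached to each binary digit $r_i=1$, after first discarding a countable set of sequences with eventually-all-$1$ tails. Your uniform dyadic-cylinder cover bypasses all of this: a single family $\{C_\sigma\}$ at scale $2^{-n}$, counted by identifying the relevant $\sigma$ with the multiples of $2^{-n}$ in $[0,r)$. The paper's cover yields the exact value $r$ for the approximating sum at every stage, whereas yours gives $r+2^{-n}$, but this costs nothing in the limit and your approach is considerably more transparent.
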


\begin{proof}
        For any $s\in \S$, the map
        \begin{align*}
                T \colon \S &\to \S\\
                t_i &\mapsto t_i + s_i \mod 2
        \end{align*}
        is an isometry that maps $s$ to $0$.
        Since $\H$ is preserved by isometries, it suffices to prove that $\H(B(0,r)) = r$ for each $r>0$.

        To see this, for any $r>0$ we may identify $r$ with its binary representation $r \in \S$, chosen such that $r$ does not have a recurring 1, and let $n\in \Z$ be the largest integer with $r_{n} \neq 0$.
        For each $i \leq n$ let
        \begin{equation*}
                r^j = \sum_{i=j}^n 2^i r_i.
        \end{equation*}
        Also let
        \begin{equation*}
                C = \{t \in \S : \exists m \text{ s.t. } t_i=1 \ \forall i \leq m\}.
        \end{equation*}
        Then $C$ is countable and so $\H(C)=0$.

        Fix $m \leq n$.
        We will estimate the Hausdorff measure of $B(0,r)\setminus C$ from above by covering it by sets of diameter at most $2^{m}$.
        First, for any
        \begin{equation*}p=(p_{m},p_{m+1},\ldots, p_{n-2}, p_{n-1}) \in \{0,1\}^{n-m}\end{equation*}
        let
        \begin{equation*}
                S(p) = \{t\in B(0,r) : t_{i} = p_{i} \ \forall \ m \leq i < n\}.
        \end{equation*}
        Then for any $s,t\in S(p)$,
        \begin{equation*}
                d(s,t) \leq \sum_{i<m} 2^i |s_i-t_i| \leq 2^m,
        \end{equation*}
        so that each $S(p)$ has diameter at most $2^m$.
        Moreover, if $d(0,s)<r^n$, then $s_i=0$ for each $i \geq n$, so that
        \begin{equation*}
                B(0,r^n) \subset \bigcup \{S(p) : p\in \{0,1\}^{n-m}\}.
        \end{equation*}

        Second, for any $i < n$ with $r_i =1$ let
        \begin{equation*}
                S_i = \{s\in B(0,r) : s_j = r_j \ \forall \ i < j \leq n, s_{i}=0\}.
        \end{equation*}
        If $s\not\in C$ and $d(0,s) \geq r^n$ then $s_n = r_n=1$.
        Also, since $r$ does not have a recurring 1, if $d(0,s) \leq r$ then $s_i \leq r_i$ for all $i\in \Z$, so that if $d(0,s)<r$ then $s_i <r_i$ for some $i<n$.
        Therefore, we may cover $(B(0,r)\setminus C)\setminus B(0,r^n)$ by $\cup \{S_i: i<n,\ r_i =1\}$.
        Finally, notice that the diameter of $S_i$ is at most $2^i$.

        By combining these observations, $B(0,r)$ is covered by
        \begin{equation*}
                \bigcup \{S(p) : p\in \{0,1\}^{n-m}\} \cup \bigcup \{s_i : i<n,\ r_i=1\}.
        \end{equation*}
        Since each of these sets have diameter at most $2^m$, with $m \leq n$ chosen arbitrarily, we see that
        \begin{align*}
                \H(B(0,r)) &\leq \sum_{p=1}^{2^{n-m}} 2^m + \sum_{\substack{i<n\\ r_i=1}} 2^i\\
                           &= 2^n + \sum_{i<n}2^i r_i = r.
        \end{align*}

        To see the other inequality, note that the map
        \begin{align*}
                B(0,r) &\to [0,r)\\
                s &\mapsto \sum_{i \in \Z} 2^i s_i
        \end{align*}
        is 1-Lipschitz and injective, so that $\H(B(0,r)) \geq \H([0,r)) =r$.
        Therefore, $\H(B(0,r)) = r$.
\end{proof}

Next we show that $S$ is purely unrectifiable.

\begin{lemma}\label{lem:T-pu}
        Let $\gamma \colon A\subset \R \to S$ be biLipschitz.
        Then $\L(A)=0$.
\end{lemma}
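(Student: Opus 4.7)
My approach is to derive a contradiction from $\L(A)>0$ using a density comparison between Kirchheim's theorem and the uniformity of $\S$. Assume for contradiction that $\L(A)>0$; by restricting $A$ to a bounded subset of positive measure I may further assume $0<\L(A)<\infty$. Set $R:=\gamma(A)\subset\S$. Because $\gamma$ is biLipschitz (in particular Lipschitz), $R$ is rectifiable in the sense of \Cref{def:rectifiable}, and $0<\H(R)<\infty$.

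I would then apply \Cref{thm:kirchheim} to the rectifiable metric space $R$, equipped with the metric inherited from $\S$. Since the intrinsic metric on $R$ is just the restriction of the $\S$-metric, the intrinsic balls and intrinsic Hausdorff measure on $R$ agree with $R\cap B(x,r)$ and $\H|_R$ respectively. \Cref{thm:kirchheim} therefore gives $\th(\H|_R,x)=1$ for $\H$-a.e.\ $x\in R$.

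On the other hand, by the preceding lemma ($\S$ is uniform) we have $\H(B(x,r))=r$ for every $x\in\S$ and $r>0$, so
\[\th(\H|_R,x) = \lim_{r\to 0}\frac{\H(R\cap B(x,r))}{2r} \leq \lim_{r\to 0}\frac{r}{2r} = \frac{1}{2}\]
for every $x\in\S$. Since $\H(R)>0$, there is some $x\in R$ at which both density statements hold, giving the contradiction $1\leq 1/2$, hence $\L(A)=0$. The argument is short and I do not expect a real obstacle; the only point requiring care is recognising that the density needed to apply \Cref{thm:kirchheim} (intrinsic to $R$) coincides with $\th(\H|_R,\cdot)$ measured through balls of $\S$, which is immediate since Hausdorff measure and balls depend only on the inherited distances.
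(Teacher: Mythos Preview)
Your proof is correct but takes a different route from the paper's. The paper argues directly from the dyadic structure of $\S$: for each $j$ it covers $\S$ by $2^j$ ``cylinders'' of diameter $2^{-j}$ that are pairwise $2^{-j}$-separated, pulls these back under $\gamma$, and observes that $A$ is then covered by sets of diameter $\leq L\,2^{-j}$ with gaps $\geq 2^{-j}/L$ between them; hence $A$ has no Lebesgue density points and $\L(A)=0$. This is an elementary porosity argument that uses nothing beyond the combinatorics of $\S$.

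Your approach instead leverages the density machinery already set up in \Cref{prelims}: Kirchheim's theorem forces $\th(\H|_R,x)=1$ on a rectifiable piece, while the uniformity lemma for $\S$ gives $\H(B(x,r))=r$ and hence $\th(\H|_R,x)\leq 1/2$ everywhere. This is cleaner and more conceptual---it isolates \emph{why} $\S$ is purely unrectifiable, namely that $\H(B(x,r))/(2r)\equiv 1/2$ rather than $1$---and the same one-line argument would apply to any space with that property. The trade-off is that it imports Kirchheim's theorem, whereas the paper's argument is entirely self-contained. Both are perfectly valid; yours fits naturally with the paper's emphasis on density.
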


\begin{proof}
        Let $\gamma$ be $L$-biLipschitz.
        For any $j\in \N$, we can cover $S$ by $2^j$ sets of the form $T_j(x_1),T_j(x_2),\ldots,T_j(x_{2^j})$ of diameter $2^{-j}$.
        These sets are separated by distance at least $2^{-j}$.
        Therefore, the sets $A_j^i=\gamma^{-1}(T_j(x_i))$ have diameter at most $L\cdot 2^{-j}$ and are separated by distance at least $2^{-j}/L$.
        Since $A\subset \cup_i A_j^i$ for each $j\in \N$, $A$ cannot have any density points and so must have measure zero.
\end{proof}
This lemma establishes \Cref{thm:example-uniform-pu}.

Next we consider the following space.
\begin{definition}\label{def:torous}
        Let
        \begin{equation*}
                \T = \{(x,s) \in \TT \times \no : \exists \ n \in \N \text{ s.t. } s_i =0 \ \forall i\geq n\}
        \end{equation*}
        equipped with the metric
        \begin{equation*}
                d((x,s),(y,t)) = d(x,y) + \sum_{i\in \N} 2^i |s_i-t_i|
        \end{equation*}
        and scaled 1-dimensional Hausdorff measure $\H/2$.
\end{definition}
As is the case for $\S$, all distinguished points in $\T$ are equivalent.

The set $\T$ is 1-rectifiable, since it is a countable union of circles $\TT$.
Also $\H$ restricted to $\TT$ is simply Lebesgue measure.
The space $\T$ is also uniform.

\begin{lemma}\label{lem:T-is-uniform}
        For any $x\in \T$ and $r\geq 0$, $\H(B(x,r))=2r$.
\end{lemma}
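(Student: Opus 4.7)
The plan is to reduce to the base point using the transitive isometry group of $\T$, decompose the ball into a countable disjoint family of circular arcs indexed by the binary factor, and compute each arc's measure directly.

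First, I would observe that $\T$, like $\S$, has a transitive isometry group: for any fixed $(a, s^0) \in \T$, the map $(x, s) \mapsto (x - a, (s_i + s^0_i \bmod 2)_i)$ is a bijective isometry sending $(a, s^0)$ to the base point $(0, \underline{0})$. Thus it suffices to compute $\H(B((0, \underline{0}), r))$. Let $S := \{s \in \no : s_i = 0 \text{ for } i \text{ large}\}$ and define $\|s\| := \sum_{i \in \N} 2^i s_i$; by binary expansion, $\|\cdot\|$ is a bijection $S \to \N \cup \{0\}$. The definition of $d$ on $\T$ yields the disjoint decomposition
\[
B((0, \underline{0}), r) = \bigsqcup_{s \in S,\, \|s\| < r} A_s, \qquad A_s := \{(x, s) : d_\TT(0, x) < r - \|s\|\},
\]
where $d_\TT$ denotes the metric on $\TT$.

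Second, for each such $s$ the inclusion $x \mapsto (x, s)$ is an isometric embedding $\TT \hookrightarrow \T$, so $\H(A_s)$ equals the Hausdorff measure of the open ball $B_\TT(0, r - \|s\|)$ in $\TT$. Since $\TT$ is locally isometric to $\R$, this measure equals the arc length: it is $2(r - \|s\|)$ when $r - \|s\| \leq 1$ and $2$ (the full circumference) when $r - \|s\| > 1$, so in every case $\H(A_s) = \min(2, 2(r - \|s\|))$. Because $\H$ is countably additive on disjoint Borel sets and only finitely many $A_s$ are nonempty, writing $r = n + \rho$ with $n \in \N \cup \{0\}$ and $\rho \in [0, 1)$, the terms $k := \|s\| \in \{0, \ldots, n-1\}$ each contribute $2$ (since $r - k \geq 1 + \rho \geq 1$), and when $\rho > 0$ the remaining term $k = n$ contributes $2\rho$; the total equals $2n + 2\rho = 2r$.

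There is no substantial obstacle here: the argument is a direct computation parallel to the one already carried out for $\S$. The only mild point of attention is the boundary case $r - \|s\| = 1$, where the open ball $B_\TT(0, 1)$ is $\TT$ with its antipode removed and so still has Hausdorff measure $2$, keeping the formula $\H(A_s) = \min(2, 2(r - \|s\|))$ valid uniformly. This confirms $\H(B((0, \underline{0}), r)) = 2r$, equivalently $(\H/2)(B) = r$, establishing that $(\T, \H/2)$ is uniform.
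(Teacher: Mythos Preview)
Your proof is correct and follows essentially the same approach as the paper: decompose the ball according to the $\no$-coordinate into a finite family of circular arcs in $\TT$, compute each arc's length, and sum. The only cosmetic differences are that you first reduce to the base point via the transitive isometry group (the paper works at a general point) and you parametrise the fractional part as $\rho\in[0,1)$ rather than the paper's $r'\in(0,1]$; the underlying computation is identical.
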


\begin{proof}
        Let $(x,s)\in \TT\times \no$ and $r>0$.
        Let $n\in \N$ be maximal with $n < r$ and let $r'=r-n$, so that $0<r' \leq 1$.
        Then, if $d((x,s),(y,t))<r$,
        \begin{equation*}
                d(x,y) + \sum_{i\in\N} 2^i |s_i-t_i| < r' + n,
        \end{equation*}
        so that
        \begin{equation}\label{eq:equality-inequality}
                \sum_{i\in\N} 2^i |s_i-t_i| \leq n,
        \end{equation}
        and, if \cref{eq:equality-inequality} is an equality, then $d(x,y) < r'$.
        Note that there is only one value of $t$, which we denote by $t^*$, that makes \cref{eq:equality-inequality} an equality, and it is such that $|s-t|$ is the binary representation of $n$.
        That is,
        \begin{equation}\label{eq:T-ball-decomp}
                B((x,s),r) \subset \bigcup\{\TT \times \{t\} : \sum_{i\in\N} 2^i |s_i-t_i| <n\} \cup B(x,r') \times\{t^*\}.
        \end{equation}
        A direct calculation shows that the reverse inequality, and hence equality, holds.
        Also note that this decomposition is disjoint.
        
        Since the values of $t$ that satisfy \cref{eq:equality-inequality} as a strict inequality correspond to the binary representations of those integers $0 \leq m <n$,
        Since \cref{eq:T-ball-decomp} is a disjoint decomposition, we have
        \begin{align*}
                \H(B((x,s),r)) &= n \H(\TT) + B(x,r')\\
                               &= n 2 + 2r' = 2r.
        \end{align*}
\end{proof}

\begin{definition}
	\label{uniform-space}
	In what follows we write $\R$ for the real line equipped with the measure $\H/2$, so that $\R$ is uniform.

	Define
	\begin{equation*}
		\U =\{\R,T_r \T, T_r\S: r>0\}.
	\end{equation*}
\end{definition}

If $X$ is uniform, then so is $\T_r X$ for all $r>0$ and so the elements of $\U$ are all uniform spaces.
The content of the next two sections shows that $\U$ contains all 1-uniform spaces.

\section{Besicovitch pairs}

\label{sec:besicovitch-pairs}
The work of \textcite{besicovitchII} relies on finding a certain configuration of points called a Besicovitch pair.
Besicovitch pairs also feature in the work of \textcite{preiss-tiser,2404.17536}.
We will also make heavy use of Besicovitch pairs, but we require a slightly different formulation than \cite{besicovitchII} (but the two are equivalent in Euclidean space) in order to work in an arbitrary metric space.

\begin{definition}\label{def:besicovitch-pair}
        A pair of points $a,b$ in a metric space is a \emph{Besicovitch pair} if for any $x\in B(a,d(a,b))$ and any $y\in B(b,d(a,b))$,
        \begin{equation*}\label{eq:besicovitch-pair}
                d(x,y) \geq d(a,b).
        \end{equation*}
\end{definition}
Of particular importance is the relationship between Besicovitch pairs and connectivity of the space.
In \cite{besicovitchII}, Besicovitch pairs are found as a consequence of a set of finite $\H$ measure being purely unrectifiable.
In the presence of a uniform measure, we will find them as a consequence of a ball being disconnected.

We emphasise that, although the definition of a Besicovitch pair is motivated by the approach in \cite{besicovitchII}, those techniques strongly rely on the strict convexity of the ball in $\R^2$, which is not true in general.
As a result, our method is entirely different.

First we prove several metric properties of a uniform metric measure space.
Recall that $\B(x,r)$ is the closure of the open ball $B(x,r)$.
\begin{lemma}\label{lem:uniform-properties}
        Let $(X,d,\mu)$ be a uniform metric measure space, $x\in X$ and $r>0$.
        \begin{enumerate}
                \item \label{item:point-outside-ball} If $\B(x,r)$ is open then there exists a $y\not\in \B(x,r)$ with $d(x,y)=r$.
                \item \label{item:closure-of-open} If $U \subset X$ is open and $F\subset U$ is a full measure subset. 
                        Then the closure of $F$ equals the closure of $U$.
                \item \label{item:uniform-disjoint-subballs} If $B(x,r)$ contains disjoint balls $B(y_1,r_1)$ and $B(y_2,r_2)$ with $r=r_1+r_2$ and $r_1,r_2\geq 0$ then
                        \begin{equation*}
                                \B(x,r) = \B(y_1,r_1)\cup \B(y_2,r_2).
                        \end{equation*}
                \item \label{item:pre-l1-distance} If $y,z\in X$ with $B(y,d(y,z)) \cap B(x,d(x,y))=\emptyset$ and $d(x,z)>d(x,y)$, then 
                        \begin{equation*}
                                d(x,z) = d(x,y) + d(y,z).
                        \end{equation*}
                \item \label{item:single-point-boundary} There can be at most one point $y\in X$ with $d(x,y)=r$ for which there exists a $\delta>0$ such that
                        \begin{equation*}
                                B(x,r) \cap B(y,\delta) = \emptyset.
                        \end{equation*}
        \end{enumerate}
\end{lemma}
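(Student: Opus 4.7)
The plan is to exploit the defining identity $\mu(B(x,r)) = r$ together with the properness of $X$ from \cref{lem:uniform-is-compact}. Items (1) and (2) are essentially measure-theoretic, while (3)--(5) are metric consequences organised around (3), which acts as an \emph{additivity principle} translating ``measure filling'' into ``closure equality''.

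For (1), first observe that $\mu(\B(x,r)) = r$, obtained by sandwiching $B(x,r) \subset \B(x,r) \subset B(x, r+\epsilon)$ and letting $\epsilon \to 0$. Hence $\mu(B(x, r+1/n) \setminus \B(x, r)) = 1/n$, so there exist $y_n \in B(x, r+1/n) \setminus \B(x,r)$. Properness produces a subsequential limit $y$ with $d(x,y) \leq r$, and since $\B(x,r)$ is open, $X \setminus \B(x,r)$ is closed and contains $y$, forcing $d(x,y) = r$. For (2), the inclusion $\overline{F} \subset \overline{U}$ is immediate. For the reverse, any $x \in \overline{U}$ and $\epsilon > 0$ give a non-empty open set $V = B(x,\epsilon) \cap U$, which contains some open ball of positive measure by uniformity; since $\mu(U \setminus F) = 0$, $\mu(F \cap V) = \mu(V) > 0$, so $F$ meets $B(x,\epsilon)$, showing $x \in \overline{F}$.

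For (3), $\mu(B(y_1, r_1) \cup B(y_2, r_2)) = r_1 + r_2 = r = \mu(B(x,r))$, so the disjoint union is a full-measure open subset of $B(x,r)$; (2), together with the fact that closure distributes over finite unions, yields the claim. For (4), set $a = d(x,y)$, $b = d(y,z)$, $c = d(x,z)$; the triangle inequality gives $c \leq a+b$. If $c < a+b$, pick $\epsilon \in (0, a+b-c)$; then $B(y, c+\epsilon-a) \subset B(y,b)$ is disjoint from $B(x,a)$ and both sit in $B(x, c+\epsilon)$ with radii summing to $c+\epsilon$. Applying (3), $\B(x, c+\epsilon) = \B(x,a) \cup \B(y, c+\epsilon-a)$. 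Now $z \in B(x, c+\epsilon)$, and for every $w \in B(x,a)$ we have $d(z,w) \geq c-a > 0$, so $z \notin \B(x,a)$ and hence $z \in \B(y, c+\epsilon-a)$, giving $b \leq c+\epsilon-a < b$, a contradiction. For (5), suppose $y_1 \neq y_2$ both satisfy the condition with parameters $\delta_1, \delta_2$. For any $\delta \in (0, \min(\delta_1, \delta_2))$, the balls $B(x,r)$ and $B(y_1, \delta)$ are disjoint subsets of $B(x, r+\delta)$ with radii summing to $r+\delta$, so (3) gives $\B(x, r+\delta) = \B(x,r) \cup \B(y_1, \delta)$. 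The point $y_2$ lies in $B(x, r+\delta)$, and $B(y_2, \delta_2)$ is a neighbourhood of $y_2$ disjoint from $B(x,r)$, so $y_2 \notin \B(x,r)$. Therefore $y_2 \in \B(y_1, \delta)$, so $d(y_1, y_2) \leq \delta$; letting $\delta \to 0$ forces $y_1 = y_2$.

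The main conceptual ingredient is (3): translating a measure-theoretic identity between open sets into a topological equality of closures, which is essentially the content of (2). The main subtlety in (4) is that $z$ sits exactly on the boundary of the balls that ``should'' contain it, so a small $\epsilon$-enlargement is needed to place $z$ strictly inside an open ball before (3) can be applied. A similar subtlety arises in (5), where the crucial observation is that the isolating ball $B(y_2, \delta_2)$ keeps $y_2$ out of the \emph{closure} $\B(x,r)$, not merely out of $B(x,r)$ itself.
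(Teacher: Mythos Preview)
Your proof is correct and items (1)--(4) follow the paper's argument essentially verbatim; your packaging of (4) as a contradiction with a single $\epsilon$ is equivalent to the paper's union-over-$r$ formulation. For (5) the paper takes a slightly more direct route: setting $\delta = \min(\delta_1,\delta_2,d(y_1,y_2)/2)$, the three balls $B(x,r)$, $B(y_1,\delta)$, $B(y_2,\delta)$ are pairwise disjoint subsets of $B(x,r+\delta)$, so uniformity gives $r+2\delta \leq r+\delta$, an immediate contradiction. Your reduction to (3) followed by $\delta\to 0$ is equally valid but trades one line of measure arithmetic for a limiting argument.
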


\begin{proof}
        To prove \cref{item:point-outside-ball}, for each $\epsilon>0$
        \begin{equation*}
                \mu(B(x,r+\epsilon)) = r+\epsilon > r = \mu(\B(x,r))
        \end{equation*}
        and so there exists a $y \in B(x,r+\epsilon) \setminus \B(x,r)$.
        Since this is true for all $\epsilon>0$ and since $\B(b,r+1)$ is compact, there exists a $y\in \B(x,r+1)\setminus \B(b,r)$ with $d(x,y)=r$, as required.

        Since $U$ minus the closure of $F$ is open, 
        if it contains a point it must contain an open ball, which necessarily has positive measure.
        This is impossible since $F$ is a full measure subset of $U$.
        Therefore, the closure of $U$ is contained in the closure of $F$.
        Since $F\subset U$, this proves \cref{item:closure-of-open}.

        To prove \cref{item:uniform-disjoint-subballs}, observe that, since $\mu$ is uniform and $B(y_1,r_1)$ and $B(y_2,r_2)$ are disjoint subsets of $B(x,r)$,
        \begin{equation*}
                \mu(B(x,r) \setminus (B(x,r_1) \cup B(y,r_2))) = r- r_1 - r_2 = 0.
        \end{equation*}
        Thus, \cref{item:closure-of-open} gives the result.

        To prove \cref{item:pre-l1-distance}, observe that for any $0<r<d(y,z)$, $B(y,r)$ and $B(x,d(x,y))$ are disjoint subsets of $B(x,d(x,y)+r)$, and so
        \begin{align*}
                B(x,d(x,y) + d(y,z)) &= \bigcup_{0<r<d(y,z)} \B(x,d(x,y) +r)\\
                                     &= \bigcup_{0<r<d(y,z)} \B(x,d(x,y)) \cup \B(y,r)\\
                                     &= \B(x,d(x,y)) \cup B(y,d(y,z))
        \end{align*}
        using \cref{item:uniform-disjoint-subballs} for the second equality.
        Therefore, since $d(x,z) > d(x,y)$, $z \not\in B(x,d(x,y)+d(y,z))$
        and so $d(x,z) \geq d(x,y)+d(y,z)$.
        Therefore, by the triangle inequality, $d(x,z) = d(x,y)+d(y,z)$.

        Finally, suppose for a contradiction to \cref{item:single-point-boundary}, that there exist two points $y_1 \neq y_2$ and $\delta_1,\delta_2>0$ such that
        \begin{equation*}
                B(x,r) \cap B(y_i,\delta_i) = \emptyset
        \end{equation*}
        for $i=1,2$ and set $\delta= \min(\delta_1,\delta_2, d(y_1,y_2)/2)$.
        Then $B(y_1,\delta),B(y_2,\delta)$ and $B(x,r)$ are disjoint subsets of $B(x,r+\delta)$ and so
        \begin{equation*}
                2\delta +r = \mu(B(y_1,\delta)\cup B(y_2,\delta) \cup B(x,r)) \leq \mu(B(x,r+\delta)) = r+\delta,
        \end{equation*}
        a contradiction.
\end{proof}

First we establish the existence of Besicovitch pairs in disconnected balls.
\begin{proposition}\label{prop:existence-of-pairs}
        Let $(X,d)$ be a proper metric space that satisfies \Cref{lem:uniform-properties} \cref{item:uniform-disjoint-subballs} and suppose that, for some $x\in X$ and $r>0$, $\B(x,r)$ is disconnected.
        Then there exists a Besicovitch pair $a,b$ with $a \in \B(x,r)$ and $d(a,b) \leq r$.
\end{proposition}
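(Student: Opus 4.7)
The plan is to take a minimiser of distance across a disconnection of $\B(x,r)$ and verify that it is a Besicovitch pair.

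Since $X$ is proper and $\B(x,r)$ is bounded, $\B(x,r)$ is compact; the assumed disconnection yields a decomposition $\B(x,r) = A \cup B$ into two nonempty, disjoint, closed (hence compact) subsets. Set $\rho := d(A,B)$, which is strictly positive as $A$ and $B$ are disjoint compacta. Relabelling if necessary I may assume $x \in A$; then $B \subset \B(x,r)$ forces $\rho \leq d(x,B) \leq r$. By compactness of $A$ and continuity of $y \mapsto d(y,B)$, choose $a \in A$ minimising $d(a,B) = \rho$, and then $b \in B$ with $d(a,b) = \rho$. By construction $a \in \B(x,r)$ and $d(a,b) \leq r$, so it remains to verify that $(a,b)$ is a Besicovitch pair.

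Fix $u \in B(a,\rho)$ and $v \in B(b,\rho)$ and suppose for contradiction that $d(u,v) < \rho$. The easy case is when both $u,v \in \B(x,r)$: the inequality $d(u,a) < \rho = d(A,B)$ forces $u \notin B$, hence $u \in A$, and symmetrically $v \in B$; then $d(u,v) \geq d(A,B) = \rho$, contradicting the assumption. So at least one of $u,v$ must lie outside $\B(x,r)$, and in this case the proof must invoke \Cref{lem:uniform-properties}\cref{item:uniform-disjoint-subballs}: two disjoint open subballs of $B(y,s)$ whose radii sum exactly to $s$ force $\B(y,s)$ to coincide with the union of their two closures.

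The strategy in this escape case is to combine the inequalities $d(a,u) < \rho$, $d(b,v) < \rho$, $d(u,v) < \rho$ with the escape $d(x,u) > r$ (or the analogous $d(x,v) > r$) to exhibit two disjoint open subballs whose radii sum exactly to the radius of some suitably chosen ambient ball; the rigid tiling then forced by \cref{item:uniform-disjoint-subballs} should locate a point whose distance data contradicts either $d(u,v) < \rho$ or the minimality $d(A,B) = \rho$. The main obstacle is precisely the identification of this ambient ball and its splitting into two disjoint subballs of radii summing exactly to its radius, since the disconnection $A \cup B$ constrains only the interior of $\B(x,r)$: all the work of transporting that constraint to points lying outside $\B(x,r)$ must be done by \cref{item:uniform-disjoint-subballs}, and this is the technical heart of the argument.
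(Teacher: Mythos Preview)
Your argument has a genuine gap: the ``escape case'' is precisely where the work lies, and you have not carried it out. You describe a strategy---find an ambient ball admitting two disjoint subballs whose radii sum to its radius, then extract a contradiction from the rigid tiling forced by \Cref{lem:uniform-properties}\cref{item:uniform-disjoint-subballs}---but you do not identify the ball, the subballs, or the contradiction. Since the disconnection $A\cup B$ constrains only points of $\B(x,r)$, and your chosen $a,b$ may sit arbitrarily close to the boundary of $\B(x,r)$, there is no obvious candidate ambient ball, and it is not clear your strategy can be made to work for the pair $(a,b)$ you have selected.

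The paper takes a different route that sidesteps the escape case entirely. Rather than minimising across the given disconnection of $\B(x,r)$, it first uses \cref{item:uniform-disjoint-subballs} to manufacture a set $Y\subset\B(x,r)$ that is simultaneously compact and \emph{open in the whole space $X$} (not merely relatively open in $\B(x,r)$). Concretely: take $y\in B$ closest to $x$, observe that $B(x,d(x,y))$ and $B(y,d(y,A)/2)$ are disjoint subballs of $B(x,d(x,y)+d(y,A)/2)$ with radii summing correctly, apply \cref{item:uniform-disjoint-subballs}, and deduce that $Y:=\B(x,d(x,y))\cap A$ contains a genuine open ball around each of its points. Now minimise $d(Y,Y^c)$: since $Y$ and $Y^c$ partition all of $X$, any $u\in B(a,d(a,b))$ and $v\in B(b,d(a,b))$ are forced into $Y$ and $Y^c$ respectively, and the Besicovitch pair condition is immediate---there is no escape case to consider. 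The moral is that \cref{item:uniform-disjoint-subballs} is used \emph{before} choosing the pair, to upgrade a relative disconnection to a global one, rather than \emph{after}, to repair a pair that might fail.
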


\begin{proof}
        Let $\B(x,r)=A\cup B$ be a decomposition into disjoint closed sets with $x\in A$ and $B \neq \emptyset$.
        Since $B$ is a closed subset of $\B(x,r)$ (which is compact since $X$ is proper), there exists a closest point $y$ of $B$ to $x$.
        In particular, $B(x,d(x,y))\subset A$ and, since $A$ is closed, $d(y,A)>0$.
        Therefore,
        \begin{equation*}
                B(y,d(y,A)/2) \cap B(x,d(x,y)) \subset B(y,d(y,A)/2) \cap A =\emptyset.
        \end{equation*}
        By \Cref{lem:uniform-properties} \cref{item:uniform-disjoint-subballs},
        \begin{multline*}
                \B(x,d(x,y)+ d(y,A)/2) = \B(x,d(x,y)) \cup \B(y,d(y,A)/2)\\
                =(\B(x,d(x,y))\cap A) \cup (\B(x,d(x,y))\cap B) \cup \B(y,d(y,A)/2)
        \end{multline*}
        and so $Y:= \B(x,d(x,y))\cap A$ is open since it contains a ball of radius
        \begin{equation*}
                \min(d(y,A)/2, d(A,B))>0
        \end{equation*}
        around each of its points.
        Since $A$ is a closed subset of $\B(x,r)$, $Y$ is also compact.

        To conclude, since $Y$ is compact with closed complement $Y^c$, we may find $a\in Y$ and $b\in Y^c$ with $d(a,b)=d(Y,Y^c)$.
        This implies that $a,b$ is a Besicovitch pair.
        Indeed, if $x\in B(a,d(a,b))$ and $y\in B(b,d(a,b))$ then necessarily we have $x\in Y$ and $b\in Y^c$ and hence $d(x,y)\geq d(Y,Y^c) = d(a,b)$.
        By construction we have $a\in Y \subset \B(x,r)$ and $d(a,b) \leq d(Y,Y^c) \leq d(x,y) \leq r$, as required.
\end{proof}

\Cref{lem:uniform-properties} \cref{item:single-point-boundary} allows us to define the following mapping between the components of a Besicovitch pair.
\begin{definition}\label{def:neighbour}
        Let $(X,d,\mu)$ be a uniform metric measure space and $a,b \in X$ a Besicovitch pair with $d=d(a,b)$.
        Define
        \begin{equation*}
                \iota_{ab} \colon \B(a,d)\cup \B(b,d) \to \B(a,d) \cup \B(b,d)
        \end{equation*}
        by mapping any point in $\B(a,d)$ to its closest point in $\B(b,d)$ and vice versa.
\end{definition}
Indeed, $\iota_{ab}$ is well defined since \emph{any} closest point $y \in \B(b,d)$ to $x\in \B(a,d)$ satisfies $B(y,d) \cap \B(a,d)=\emptyset$, by the definition of a Besicovitch pair.

We now establish various properties of Besicovitch pairs in uniform metric measure spaces.
\begin{lemma}\label{lem:bpair-properties}
        Let $a,b$ be a Besicovitch pair in a uniform metric measure space $X$ and $d=d(a,b)$.
        Then
        \begin{enumerate}
                \item \label{item:bc-double-ball} $\B(a,2d) = \B(a,d) \cup \B(b,d)$.
                \item \label{item:bc-components-are-open} For any $x\in \B(a,d)$, $\B(x,d)=\B(a,d)$ and $\B(a,d)$ is open.
                \item \label{item:bc-fixed-distance} $d(x,\iota_{ab}(x)) = d$ for each $x\in \B(a,d) \cup \B(b,d)$.
                \item \label{item:bc-l1-dist} For any $x\in \B(a,d)$ and $z\in \B(b,d)$, $d(x,z) = d + d(\iota_{ab}(x), z)$.
                \item \label{item:bc-self-inverse} $\iota_{ab}^2 = \iota_{ab}$.
                \item \label{item:bc-balls-are-separated} $\B(x,2d) = \B(a,d)\cup \B(b,d)$ for any $x\in \B(a,d) \cup \B(b,d)$.
        \end{enumerate}
\end{lemma}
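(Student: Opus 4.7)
My approach is to prove items (1)--(6) in order. Item (3) is the main obstacle, whereas (1) and (2) are direct and (4)--(6) follow from (3) and earlier parts with little effort.

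For (1), the Besicovitch pair definition ensures $B(a,d) \cap B(b,d) = \emptyset$, so these are disjoint open subballs of $B(a,2d)$ with radii summing to $2d$, and \Cref{lem:uniform-properties} \cref{item:uniform-disjoint-subballs} applies. For (2), I first extend the Besicovitch property to closures by continuity: if $x \in \B(a,d)$ and $y \in \B(b,d)$, then $d(x,y) \geq d$. Combined with the triangle inequality this yields $B(x,d) \subset \B(a,2d) \setminus \B(b,d) = \B(a,d)$, so $\B(a,d)$ contains an open ball of radius $d$ around each of its points and is therefore open. The measure equality $\mu(B(x,d)) = d = \mu(\B(a,d))$ makes $B(x,d)$ a full-measure subset of the open set $\B(a,d)$, so \Cref{lem:uniform-properties} \cref{item:closure-of-open} delivers $\B(x,d) = \B(a,d)$.

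For (3), set $y = \iota_{ab}(x)$ and $e = d(x,y)$; (2) gives $e \geq d$. I argue by contradiction: suppose $e > d$. For $r \in (d,e)$ the open ball $B(x,r)$ has measure $r$, is disjoint from $\B(b,d)$ (since $y$ is the unique closest point at distance $e$), and contains $B(x,d) \subset \B(a,d)$ of measure $d$, so $B(x,r) \setminus \B(a,2d)$ has measure $r - d > 0$ and is non-empty. Since $\B(a,2d) = \B(a,d) \cup \B(b,d)$ is a union of open sets (by (2) for both pairs) and $X$ is proper (\Cref{lem:uniform-is-compact}), a subsequential limit $z$ of points in $B(x,r) \setminus \B(a,2d)$ as $r \downarrow d$ satisfies $d(x,z) = d$ and $z \notin \B(a,2d)$; the triangle inequality then forces $d(a,z) = 2d$. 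The key observation is that $(x,z)$ is itself a Besicovitch pair at distance $d$: for $u \in B(x,d)$ one has $B(u,d) \subset \B(a,d)$ by (2), while $d(a,z) = 2d$ forces $B(z,d) \cap \B(a,d) = \emptyset$, so $d(u,v) \geq d$ for any $v \in B(z,d)$. Applying (1) to $(x,z)$ gives the decomposition $\B(x,2d) = \B(a,d) \cup \B(z,d)$. If $e < 2d$, then $y \in B(x,2d) \subset \B(a,d) \cup \B(z,d)$, and being disjoint from $\B(a,d)$ it must lie in $\B(z,d)$; applying (2) to $y$ under both Besicovitch pairs $(a,b)$ and $(x,z)$ produces $\B(b,d) = \B(y,d) = \B(z,d)$, forcing the contradiction $z \in \B(b,d) \subset \B(a,2d)$. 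The borderline case $e = 2d$ (which forces $y = b$ and $d(x,a) = d$) requires a separate argument, either by a small perturbation or by iterating the construction to generate pairwise disjoint balls of measure $d$ inside $\B(a,3d)$.

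Items (4), (5), and (6) all follow from (3) and earlier parts. For (4) with $y = \iota_{ab}(x)$, (2) applied to $y$ gives $d(y,z) \leq d$ for $z \in \B(b,d)$, so the extended Besicovitch property forces $B(y,d(y,z)) \cap B(x,d) = \emptyset$; (3) together with \Cref{lem:uniform-properties} \cref{item:single-point-boundary} gives $d(x,z) > d$ for $z \neq y$; then \Cref{lem:uniform-properties} \cref{item:pre-l1-distance} applied to $(x,y,z)$ yields $d(x,z) = d + d(y,z)$. For (5), (3) puts $x$ at distance $d$ from $y$, and openness of $\B(a,d)$ supplies an isolating ball $B(x,\delta) \subset \B(a,d)$ disjoint from $B(y,d)$; \Cref{lem:uniform-properties} \cref{item:single-point-boundary} then identifies $x$ as the unique closest point in $\B(a,d)$ to $y$, so $\iota_{ab}(y) = x$ and hence $\iota_{ab}^2 = \mathrm{id}$. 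For (6), $B(x,d)$ and $B(\iota_{ab}(x),d)$ are disjoint open subballs of $B(x,2d)$ of radii summing to $2d$ (using (3) for the distance bound and (2) for the disjointness), and \Cref{lem:uniform-properties} \cref{item:uniform-disjoint-subballs} gives $\B(x,2d) = \B(x,d) \cup \B(\iota_{ab}(x),d) = \B(a,d) \cup \B(b,d)$ via (2) applied to each side.
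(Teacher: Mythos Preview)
Your proofs of (1), (2), (4), (5), (6) are fine and match the paper's approach closely. The issue is (3), where you have taken a far more complicated route than necessary and left an acknowledged gap.

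The paper's proof of (3) is essentially one line. Once you know from (2) that $\B(x,d)=\B(a,d)$ is open, \Cref{lem:uniform-properties}\,\cref{item:point-outside-ball} hands you a point $y\notin\B(a,d)$ with $d(x,y)=d$. By (1) this $y$ must lie in $\B(b,d)$; since the (extended) Besicovitch property already gives $d(x,\B(b,d))\ge d$, the point $y$ realises the minimum and hence $\iota_{ab}(x)=y$, giving $d(x,\iota_{ab}(x))=d$ immediately. You never need to assume $e>d$, construct an auxiliary Besicovitch pair $(x,z)$, or handle any borderline case.

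Your contradiction argument does work when $d<e<2d$, but the case $e=2d$ is genuinely unresolved: the suggestions ``small perturbation'' and ``iterating the construction to generate pairwise disjoint balls of measure $d$ inside $\B(a,3d)$'' are both vague. Three disjoint balls of measure $d$ (namely $\B(a,d)$, $\B(b,d)$, $\B(z,d)$) fit exactly inside $\B(a,3d)$, so no contradiction arises from that count alone; you would need a further step to produce a fourth, and you have not indicated how. Since the direct argument via \Cref{lem:uniform-properties}\,\cref{item:point-outside-ball} bypasses all of this, you should replace your treatment of (3) with it.
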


\begin{proof}
        Applying \Cref{lem:uniform-properties} \cref{item:uniform-disjoint-subballs} with $r_1=r_2=d$ gives \cref{item:bc-double-ball}.

        To prove \cref{item:bc-components-are-open}, let $x\in \B(a,d)$.
        By the triangle inequality, $\B(x,d) \subset \B(a,2d)$, but by the definition of a Besicovitch pair, $d(x,z)\geq d$ for any $z\in \B(b,d)$.
        Therefore, by \cref{item:bc-double-ball}, $\B(x,d) \subset \B(a,d)$.
        By \Cref{lem:uniform-properties} \cref{item:uniform-disjoint-subballs}, $\B(x,d)=\B(a,d)$.
        Therefore, by \cref{item:bc-double-ball}, $\B(x,d)$ contains an open ball of radius $d$ around each of its points and so is open.

        By \Cref{lem:uniform-properties} \cref{item:point-outside-ball} there exists a $y\not\in \B(x,d)$ with $d(x,y)=d$.
        By \cref{item:bc-double-ball}, we must have $y\in\B(b,d)$.
        Since $a,b$ is a Besicovitch pair, $d(x,\B(b,d)=d(x,y)$ and so $\iota_{ab}(x)=y$, proving \cref{item:bc-fixed-distance}.
        By symmetry, the corresponding statement is true for $x\in\B(b,d)$.

        For \cref{item:bc-l1-dist}, let $x\in \B(a,d)$ and $z\in \B(b,d)$.
        Necessarily, we must have $d \leq d(x,z) \leq 2d$.
        If $d=d(x,z)$, then since $\B(b,d)$ is open, by \Cref{lem:bpair-properties} \cref{item:single-point-boundary}, $z=\iota_{ab}(x)$, and so \cref{item:bc-l1-dist} holds.
        If $d< d(x,z)<2d$, then by applying \Cref{lem:uniform-properties} \cref{item:pre-l1-distance} with $y=\iota_{ab}(x)$, $d(x,z) = d + d(\iota_{ab}(x),z)$.
        If $d(x,z)=2d$, since $z$ is a limit of a sequence in $\B(b,d)$, for which the equality holds, continuity of the distance gives \cref{item:bc-l1-dist}.

        If $x\in \B(a,b)$ then $\iota_{ab}(x) \in \B(b,d)$ with $d(x,\iota_{ab}(x))=d$.
        Since $\iota_{ab}(\iota_{ab}(x))$ is the unique point in $\B(a,d)$ with $d(\iota_{ab}(\iota_{ab}(x)),\iota_{ab}(x))=d$, we must have $\iota_{ab}(\iota_{ab}(x))=x$, proving \cref{item:bc-self-inverse}.

        Finally, if $x\in \B(a,d)$, by \cref{item:bc-l1-dist} and the triangle inequality,
        \begin{equation*}
                \B(x,2d) \supset \B(a,d)\cup \B(\iota_{ab}(x),d).
        \end{equation*}
        By \Cref{lem:uniform-properties} \cref{item:uniform-disjoint-subballs} we have equality.
\end{proof}

\begin{proposition}\label{prop:iota-isometry}
        Let $(X,d,\mu)$ be a uniform metric measure space and $a,b\in X$ a Besicovitch pair.
        Then $\iota_{ab}$ is an isometry.
\end{proposition}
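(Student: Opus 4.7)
The plan is to verify $d(\iota_{ab}(x), \iota_{ab}(y)) = d(x,y)$ case by case, splitting according to whether $x$ and $y$ lie in the same component ($\B(a,d)$ or $\B(b,d)$) or in opposite components. Almost all of the work has been done in \Cref{lem:bpair-properties}: property \cref{item:bc-l1-dist} says the metric ``splits'' as an $\ell^1$-type sum across the pair, which makes the cross case immediate and reduces the same-side case to a triangle inequality. The subtlety I expect is that the triangle inequality alone only delivers the inequality $d(\iota_{ab}(x), \iota_{ab}(y)) \leq d(x,y)$ in the same-side case; the matching lower bound is extracted by reapplying this inequality after swapping the roles of $a$ and $b$ and then invoking the involution property \cref{item:bc-self-inverse}.

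For the cross case, suppose $x \in \B(a,d)$ and $y \in \B(b,d)$, so $\iota_{ab}(x) \in \B(b,d)$ and $\iota_{ab}(y) \in \B(a,d)$. Applying \cref{item:bc-l1-dist} to the pair $(x,y)$ gives
\[ d(x,y) = d + d(\iota_{ab}(x), y), \]
while applying it to the pair $(\iota_{ab}(y), \iota_{ab}(x))$, and using that $\iota_{ab}$ is an involution (as established in the proof of \cref{item:bc-self-inverse}), gives
\[ d(\iota_{ab}(y), \iota_{ab}(x)) = d + d(y, \iota_{ab}(x)). \]
The right-hand sides coincide, so $d(\iota_{ab}(x), \iota_{ab}(y)) = d(x,y)$.

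For the same-side case, take $x,y \in \B(a,d)$, so $\iota_{ab}(y) \in \B(b,d)$. Applying \cref{item:bc-l1-dist} to $(x, \iota_{ab}(y))$ together with the triangle inequality and $d(y, \iota_{ab}(y)) = d$ from \cref{item:bc-fixed-distance} yields
\[ d + d(\iota_{ab}(x), \iota_{ab}(y)) \;=\; d(x, \iota_{ab}(y)) \;\leq\; d(x,y) + d, \]
so $d(\iota_{ab}(x), \iota_{ab}(y)) \leq d(x,y)$. Because the definition of a Besicovitch pair is symmetric in its two points and $\iota_{ab} = \iota_{ba}$, the entire \Cref{lem:bpair-properties} holds after swapping $a$ and $b$, and the preceding argument applies verbatim to pairs in $\B(b,d)$. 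Applied to $\iota_{ab}(x), \iota_{ab}(y) \in \B(b,d)$ and then using $\iota_{ab}^2 = \operatorname{id}$, it produces the reverse inequality $d(x,y) \leq d(\iota_{ab}(x), \iota_{ab}(y))$, so equality holds. The remaining case $x,y \in \B(b,d)$ is identical by the same symmetry, and combined with the cross case this shows $\iota_{ab}$ is an isometry on $\B(a,d) \cup \B(b,d)$.
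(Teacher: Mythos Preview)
Your proof is correct and follows the same approach as the paper, splitting into cross and same-side cases and using \cref{item:bc-l1-dist} together with the involution \cref{item:bc-self-inverse}. In the same-side case the triangle-inequality detour is unnecessary: applying \cref{item:bc-l1-dist} to $d(x,\iota_{ab}(y))$ once from the $a$-side and once (in its symmetric form) from the $b$-side yields $d + d(\iota_{ab}(x),\iota_{ab}(y)) = d(x,\iota_{ab}(y)) = d + d(x,y)$ directly, which is how the paper argues.
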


\begin{proof}
        Let $d=d(a,b)$ and write $\iota = \iota_{ab}$.
        Let $x\in \B(x,d)$ and $z\in \B(b,d)$.
        By \Cref{lem:bpair-properties} \cref{item:bc-l1-dist} and \cref{item:bc-self-inverse},
        \begin{equation*}
                d(\iota(x),\iota(z)) = d + d(\iota(\iota(x)), \iota(z)) = d + d(x, \iota(z)) = d(x,z).
        \end{equation*}
        Therefore, $\iota$ is an isometry for this configuration.

        For the other configuration, by symmetry we may suppose that $x,y \in \B(a,b)$.
        In this case, again by \cref{item:bc-l1-dist} and \cref{item:bc-self-inverse}, we have
        \begin{equation*}
                d(x,\iota(y)) = d + d(\iota(x),\iota(y))
        \end{equation*}
        and
        \begin{equation*}
                d(\iota(y),x) = d + d(\iota(\iota(y)),x) = d+ d(y,x).
        \end{equation*}
        Therefore, $d(x,y)=d(\iota(x),\iota(y))$, as required.
\end{proof}

\begin{proposition}\label{prop:double-pair}
        Let $(X,d,\mu)$ be a uniform metric measure space and $a,b\in X$ a Besicovitch pair.
        There exists a $b'\in X$ with $d(a,b') = 2d(a,b)$ such that $a,b'$ is a Besicovitch pair.
\end{proposition}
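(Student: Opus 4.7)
The plan is to produce $b'$ using \Cref{lem:uniform-properties} \cref{item:point-outside-ball} applied to the ball $\bar B(a,2d)$ where $d=d(a,b)$, and then verify the Besicovitch pair condition for $a,b'$ by an argument that applies \Cref{lem:bpair-properties} \cref{item:bc-balls-are-separated} twice.

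First, I would check that $\bar B(a,2d)$ is open. By \Cref{lem:bpair-properties} \cref{item:bc-double-ball}, $\bar B(a,2d)=\bar B(a,d)\cup \bar B(b,d)$, and by \Cref{lem:bpair-properties} \cref{item:bc-components-are-open} each of these pieces is open, so their union is. Applying \Cref{lem:uniform-properties} \cref{item:point-outside-ball} then produces a point $b'\in X$ with $d(a,b')=2d$ and $b'\notin \bar B(a,2d)$; this is the candidate.

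Next, I would verify that $a,b'$ is a Besicovitch pair by contradiction. Suppose some $x\in B(a,2d)$ and $y\in B(b',2d)$ satisfy $d(x,y)<2d$, i.e.\ $y\in B(x,2d)$. Since $x\in B(a,2d)\subset \bar B(a,d)\cup \bar B(b,d)$, \Cref{lem:bpair-properties} \cref{item:bc-balls-are-separated} gives $\bar B(x,2d)=\bar B(a,d)\cup \bar B(b,d)=\bar B(a,2d)$, and hence $y\in\bar B(a,2d)=\bar B(a,d)\cup \bar B(b,d)$. Applying \cref{item:bc-balls-are-separated} a second time, now with the base point $y$, yields $\bar B(y,2d)=\bar B(a,d)\cup \bar B(b,d)=\bar B(a,2d)$. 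But $d(b',y)<2d$ forces $b'\in B(y,2d)\subset \bar B(a,2d)$, contradicting the choice of $b'$. Therefore $d(x,y)\geq 2d$ for all such $x,y$, and $a,b'$ is a Besicovitch pair.

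I do not anticipate a genuine obstacle here: the proposition is essentially a clean consequence of the very rigid structure already codified in \Cref{lem:bpair-properties}, and the only subtlety is noticing that \cref{item:bc-balls-are-separated} can be iterated (once to move from $x$ to $y$, once to trap $b'$). The mild point worth double-checking when writing it out is that in \Cref{lem:uniform-properties} \cref{item:point-outside-ball} the produced point $b'$ satisfies $d(a,b')=2d$ exactly, and that the strict inequality $d(b',y)<2d$ indeed places $b'$ inside the open ball $B(y,2d)$, which is what drives the contradiction.
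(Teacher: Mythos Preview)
Your proof is correct and follows essentially the same route as the paper's: both establish openness of $\B(a,2d)$ via \Cref{lem:bpair-properties} \cref{item:bc-double-ball,item:bc-components-are-open}, produce $b'$ via \Cref{lem:uniform-properties} \cref{item:point-outside-ball}, and then invoke \Cref{lem:bpair-properties} \cref{item:bc-balls-are-separated} for the verification. Your double application of \cref{item:bc-balls-are-separated} is in fact a bit more careful than the paper's one-line finish, which only explicitly records $B(a,2d)\cap B(b',2d)=\emptyset$ rather than the full Besicovitch pair inequality.
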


\begin{proof}
        Let $d=d(a,b)$.
        By \Cref{lem:bpair-properties} \cref{item:bc-double-ball,item:bc-components-are-open}, $\B(a,2d)$ is open.
        Therefore, by \Cref{lem:uniform-properties} \cref{item:point-outside-ball}, there exists a $b'\not\in \B(a,2d)$ with $d(a,b')=2d$.

        By \Cref{lem:bpair-properties}, for any $x\in \B(a,2d)$, $d(x,b) \geq 2d$ and so $B(a,2d) \cap B(b',2d) = \emptyset$, as required.
\end{proof}

The previous results imply a structure theorem for uniform spaces with a Besicovitch pair.
\begin{theorem}\label{thm:bp-isometry-to-l1}
        Let $(X,d,\mu)$ be a uniform metric measure space and $a,b\in X$ a Besicovitch pair with $d=d(a,b)$.
        Then $X$ is isometric to
        \begin{equation*}
                \B(a,d) \times \{0,1\}^\N
        \end{equation*}
        with the metric
        \begin{equation*}
                d((x,s),(y,t)) = d(x,y) + d \sum_{i\in \N} 2^i |s_i-t_i|
        \end{equation*}
\end{theorem}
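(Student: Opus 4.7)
The plan is to iterate \Cref{prop:double-pair} and \Cref{prop:iota-isometry} at successively doubled scales, reading off the $\{0,1\}^\N$ factor from the binary choice at each level. First, apply \Cref{prop:double-pair} recursively to obtain points $b^{(0)}:=b, b^{(1)}, b^{(2)}, \ldots$ such that $a, b^{(n)}$ is a Besicovitch pair with $d(a, b^{(n)})=2^n d$ for every $n\geq 0$. Writing $B_n := \B(a, 2^n d)$, $C_n := \B(b^{(n)}, 2^n d)$, and $\iota_n := \iota_{a b^{(n)}}$, \Cref{lem:bpair-properties} \cref{item:bc-double-ball,item:bc-components-are-open} give that $B_n, C_n$ are disjoint open balls with $B_{n+1} = B_n \cup C_n$, and \Cref{prop:iota-isometry} says that $\iota_n$ is an isometric involution exchanging them.

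Next, I would define $\varphi_n \colon \B(a,d) \times \{0,1\}^n \to B_n$ recursively by $\varphi_0(x,\emptyset) = x$ and, for $s = (s_0,\ldots, s_n) \in \{0,1\}^{n+1}$,
\[
\varphi_{n+1}(x, s) =
\begin{cases}
\varphi_n(x, (s_0,\ldots,s_{n-1})) & \text{if } s_n = 0,\\
\iota_n\bigl(\varphi_n(x, (s_0,\ldots,s_{n-1}))\bigr) & \text{if } s_n = 1.
\end{cases}
\]
The disjoint decomposition $B_{n+1} = B_n \cup C_n$ makes each $\varphi_n$ a bijection onto $B_n$ by a straightforward induction. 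To show $\varphi_n$ is an isometry for the product metric $d(x,y) + d\sum_{i=0}^{n-1} 2^i |s_i - t_i|$, I would induct on $n$. When $s$ and $t$ agree in the top coordinate the images lie in the same half, and the induction hypothesis combined with the isometry property of $\iota_{n-1}$ yields the claim. When $s_{n-1} \neq t_{n-1}$, the images lie on opposite sides of the top-level Besicovitch pair, so \Cref{lem:bpair-properties} \cref{item:bc-l1-dist} gives that the distance is $2^{n-1} d$ plus the distance after reflecting one image back across $\iota_{n-1}$; the latter now lies in a single half, reducing to the previous case.

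Finally, the $\varphi_n$ agree along the inclusions $\{0,1\}^n \hookrightarrow \{0,1\}^{n+1}$ obtained by appending $0$, so \Cref{lem:increasing-define-function} assembles them into an isometry $\varphi$ from $\B(a,d)$ times the set of eventually-zero binary sequences onto $\bigcup_n B_n$. Properness of $X$ from \Cref{lem:uniform-is-compact} implies every point of $X$ has finite distance to $a$ and therefore lies in some $B_n$, so $\bigcup_n B_n = X$ and $\varphi$ is surjective. Since the set of eventually-zero sequences coincides with the points at finite distance from the constant-zero sequence in the displayed extended metric, this identifies $X$ with the relevant component of $\B(a,d) \times \{0,1\}^\N$, which is the claim. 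I expect the main obstacle to be the inductive distance calculation, specifically keeping track of how \cref{item:bc-l1-dist} peels off the top coordinate when $s$ and $t$ disagree there; everything after that is gluing and surjectivity.
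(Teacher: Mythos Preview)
Your proposal is correct and follows essentially the same route as the paper: iterate \Cref{prop:double-pair} to obtain Besicovitch pairs at scales $2^n d$, build level-$n$ isometries inductively using \Cref{prop:iota-isometry} and \Cref{lem:bpair-properties} \cref{item:bc-l1-dist} for the cross-term, and glue with \Cref{lem:increasing-define-function}. The only cosmetic difference is that you construct $\varphi_n$ from the product into $X$ while the paper builds $\iota_n$ in the opposite direction; your remark about the eventually-zero sequences being the finite-distance component is a nice clarification that the paper leaves implicit.
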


\begin{proof}
        For each $n \in \N$ we will inductively construct a surjective isometry
        \begin{equation*}
                \iota_n \colon \B(a,2^n d)\to \B(a,d)\times \{0,1\}^{n-1} \times \{0\}^\N \subset \B(a,d) \times \{0,1\}^\N
        \end{equation*}
        such that, for any $m \leq n$, $\iota_n$ and $\iota_m$ agree on their common domain.

        Once we have done this, the proof is completed by defining $\iota$, an isometry between $X$ and $\B(a,d)\times\{0,1\}^\N$ by $\iota(x) = \iota_n(x)$ for any $n$ with $x\in \B(x,2^n d)$.
        This is an isometry because, given $x,y\in X$, there exists an $n\in\N$ with $x,y\in \B(a,2^n d)$ and so
        \begin{equation*}
                d(\iota(x),\iota(y))= d(\iota_n(x),\iota_n(y)) =d(x,y).
        \end{equation*}
        It is surjective because
        \begin{equation*}
                \B(a,d)\times \{0,1\}^{n-1}\times \{0\}^\N
        \end{equation*}
        contains a ball of radius $d2^n$ around $(a,0,0,\ldots)$.

        To construct each $\iota_n$, define
        \begin{equation*}
                \iota_1(x) = (x,0,0,\ldots),
        \end{equation*}
        an isometry.
        Now suppose that we have defined $\iota_n$.
        By inductively applying \Cref{prop:double-pair}, there exists a $b'\in X$ with $d(a,b')=d2^n$ such that $a,b'$ is a Besicovitch pair.
        By \Cref{lem:bpair-properties} \cref{item:bc-double-ball},
        \begin{equation*}
                \B(a,2^{n+1}d) = \B(a,2^n d) \cup \B(b',2^n d).
        \end{equation*}
        Define $\iota_{n+1}$ to equal $\iota_n$ on $\B(a,2^n d)$ and, for any $y\in \B(b',2^n d)$,
        \begin{equation*}
                \iota_{n+1}(y) = I(\iota_{n}(\iota_{ab'}(x))),
        \end{equation*}
        where $I$ exchanges the $n^\text{th}$ coordinate of the $\{0,1\}^\N$ factor.
        Since $\iota_{ab'}$ is an isometry, $\iota_{n+1}$ is an isometry on each component of $\B(a,2^{n+1}d)$, we must show it is an isometry on the whole domain.

        To this end, let $x\in \B(a,2^nd)$ and $y\in \B(b',2^nd)$.
        By construction, both $\iota_n(x)$ and $\iota_n(\iota_{ab'}(y))$ have a $0$ in the $n^\text{th}$ coordinate of the $\{0,1\}^\N$ factor.
        Therefore
        \begin{equation*}
                d(\iota_{n+1}(x),\iota_{n+1}(y)) = d(\iota_n(x),I(\iota_n(\iota_{ab'}(y)))) = 2^n d + d(\iota_n(x),\iota_n(\iota_{ab'}(y)))
        \end{equation*}
        By induction $\iota_n$ is an isometry and so
        \begin{equation*}
                d(\iota_{n+1}(x),\iota_{n+1}(y)) = 2^n d + d(x,\iota_{ab'}(y)) = d(x,y)
        \end{equation*}
        by \Cref{lem:bpair-properties} \cref{item:bc-l1-dist}.
\end{proof}
\section{Classifying 1-uniform metric measure spaces}
\label{sec:classification}

The relationship between connected compact sets of finite Hausdorff measure and Lipschitz curves  is well known.
We will use the following, see, for example, \cite[Theorems 4.4.20, 4.2.1, 4.4.2]{ambtilli}.
\begin{theorem}\label{thm:curve-connected}
        Let $X$ be a compact connected metric space with $\H(X)<\infty$.
        Then for any $x,y\in X$ there exists an injective Lipschitz curve $\gamma_{xy} \colon [0,L] \to S$ with $\gamma_{xy}(0)=x$, $\gamma_{xy}(L)=y$ and $\H(\gamma_{xy})$ minimal amongst all curves joining $x$ to $y$.

        Moreover, we may suppose that $\gamma_{xy}$ is \emph{parametrised by arc-length}.
        That is, $\gamma_{xy}$ is 1-Lipschitz and, for any $s\leq t \in [0,L]$,
        \begin{equation*}
                \H(\gamma_{xy}([s,t])) = t-s.
        \end{equation*}
\end{theorem}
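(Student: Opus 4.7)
The plan is to obtain $\gamma_{xy}$ via a direct minimization of the length functional, followed by a loop-removal argument to enforce injectivity and a standard reparametrization.

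First, I would establish that the class of curves joining $x$ to $y$ is non-empty with finite infimal length. This uses the classical fact that a compact connected metric space of finite $\H$ measure is arc-connected, which can be proved by constructing, via a Cantor-style iterative procedure on a sequence of finite $\epsilon$-nets, an injective continuous map $[0,1]\to X$ from $x$ to $y$; the restriction of $\H$ to this arc is finite, and reparametrizing by a suitable monotone function yields a Lipschitz curve. Let $L := \inf\{\H(\gamma)\}$ over all such Lipschitz curves, so $0 \leq L \leq \H(X) < \infty$.

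Next, take a minimizing sequence $\gamma_n \colon [0,L_n]\to X$ with $L_n \to L$ and each $\gamma_n$ 1-Lipschitz (by pre-reparametrization at constant speed using $\H$). After extending each $\gamma_n$ constantly to a common interval $[0,L+1]$, the Arzelà–Ascoli theorem applied in the compact space $X$ provides a uniformly convergent subsequence with limit $\gamma \colon [0,L]\to X$, a 1-Lipschitz curve joining $x$ to $y$. The crucial step is then Gołąb's lower semicontinuity theorem: because the images $\gamma_n([0,L_n])$ are connected compact subsets of the compact space $X$ and converge (up to subsequence) in the Hausdorff metric to a connected compact set containing $\gamma([0,L])$, one has
\begin{equation*}
        \H(\gamma([0,L])) \leq \liminf_n \H(\gamma_n([0,L_n])) = L,
\end{equation*}
so $\gamma$ is a minimizer.

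The remaining work is to replace $\gamma$ by an injective curve of the same $\H$ measure and then to reparametrize. For injectivity, I would apply a loop-removal procedure: on the closed equivalence relation $s \sim t \iff \gamma(s)=\gamma(t)$, use Zorn's lemma to choose, for each double point, a maximal loop and excise it (equivalently, pass to a monotone quotient of the parameter interval). The resulting map $\tilde\gamma$ is still Lipschitz, still joins $x$ to $y$, is injective, and its image is contained in that of $\gamma$, so $\H(\tilde\gamma)\leq L$ and hence equals $L$ by minimality. Finally, since $\tilde\gamma$ is injective and its image has finite $\H$ measure, $t\mapsto \H(\tilde\gamma([0,t]))$ is continuous and strictly increasing onto $[0,L]$; precomposing $\tilde\gamma$ with its inverse produces the arc-length parametrization satisfying $\H(\gamma_{xy}([s,t])) = t-s$.

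The main obstacle is the Gołąb lower semicontinuity step together with the loop-removal surgery: semicontinuity of $\H$ under Hausdorff convergence of connected compact sets is delicate and requires passing from parametric convergence of the $\gamma_n$ to Kuratowski convergence of their images, while the loop-removal must be carried out carefully (transfinitely, in general) to preserve the Lipschitz property and to avoid dropping the endpoints.
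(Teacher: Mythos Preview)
The paper does not give its own proof of this statement: it is quoted as a known result with a reference to \cite[Theorems 4.4.20, 4.2.1, 4.4.2]{ambtilli}. Your outline is essentially the classical route taken in that reference (arc-connectedness of compact connected sets of finite $\H$ measure, Arzel\`a--Ascoli plus Go\l \k{a}b's lower semicontinuity to produce a length minimizer, loop removal to obtain an injective arc, and reparametrization by arc-length), so there is nothing to compare.
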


We will be interested in \emph{parametrisations} of Lipschitz curves.
For convenience, we will refer to both the function $\gamma \colon I \to X$ or its image $\gamma(I)$ by simply $\gamma$.

The previous theorem is often used to show that a compact connected metric space of finite $\H$ measure is the image of a Lipschitz curve.
We prove a variant for uniform metric measure spaces.
\begin{lemma}\label{lem:compact-connected-contained-in-curve}
        Let $(K,d)$ be a (non-empty) compact connected metric space with $\H(K)<\infty$.
        Suppose that, for any injective $\gamma\colon I \to K$ that is parametrised by arc-length, and any $[c-r,c+r]\subset I$,
        \begin{equation*}
                \B(\gamma(c),r) = \gamma([c-r,c+r]).
        \end{equation*}
        Then $K$ is the image of a Lipschitz curve $\gamma \colon I \to K$.
        On any proper subinterval, $\gamma$ is injective and parametrised by arc-length.
\end{lemma}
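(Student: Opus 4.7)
The plan is to construct a 1-Lipschitz curve $\gamma \colon [0, L^*] \to K$ of maximal possible length $L^*$ that is injective and arc-length parametrised on every proper subinterval, and then show that the image of such a maximal $\gamma$ must coincide with $K$. First I would apply Zorn's lemma to the family $\mathcal{F}$ of pairs $(L, \gamma)$ with $\gamma \colon [0, L] \to K$ being 1-Lipschitz and injective and arc-length parametrised on every proper subinterval of $[0, L]$, ordered by extension. Any chain is bounded in length by $\H(K) < \infty$, and its pointwise union, extended to the Cauchy limit at $L^* = \sup L_\alpha$, provides a candidate upper bound. The only subtle verification is injectivity on proper subintervals of $[0, L^*]$: the ball hypothesis applied to the restriction $\gamma|_{[0, L^* - \epsilon]}$ at any $c = s \in (0, L^*)$ and any $r < L^* - \epsilon - s$ rules out $\gamma(L^*) = \gamma(s)$, so the only permitted collision is $\gamma(0) = \gamma(L^*)$, which leaves injectivity on proper subintervals intact.

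Once I have a maximal $(L^*, \gamma) \in \mathcal{F}$, I must argue $\gamma([0, L^*]) = K$. Since this image is compact and $K$ is connected, if it is a proper subset then some $z \in \gamma([0, L^*])$ is a limit of points in $K \setminus \gamma([0, L^*])$. For any $c \in (0, L^*)$ the ball hypothesis gives $\B(\gamma(c), r) = \gamma([c - r, c + r]) \subset \gamma([0, L^*])$ for some $r > 0$, so $\gamma(c)$ cannot be such a boundary point; thus $z \in \{\gamma(0), \gamma(L^*)\}$.

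In the tree case $\gamma(0) \neq \gamma(L^*)$, say $z = \gamma(L^*)$, I would pick $y \in K \setminus \gamma([0, L^*])$ close to $z$ and let $\eta \colon [0, L'] \to K$ be a shortest injective arc-length path from $z$ to $y$ given by \Cref{thm:curve-connected}. Setting $t^* = \max\{t : \eta(t) \in \gamma([0, L^*])\}$ (attained by compactness) and writing $\eta(t^*) = \gamma(s^*)$, the ball-hypothesis argument applied at $c = s^*$ forbids $s^* \in (0, L^*)$, since otherwise $\eta(t)$ would lie in $\gamma([c - r, c + r]) \subset \gamma([0, L^*])$ for $t$ slightly greater than $t^*$, contradicting maximality of $t^*$. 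Thus $s^* \in \{0, L^*\}$, and concatenating $\gamma$ with $\eta|_{[t^*, L']}$ on the right (when $s^* = L^*$) or prepending the reverse of $\eta|_{[t^*, L']}$ on the left (when $s^* = 0$) produces a strictly longer element of $\mathcal{F}$, contradicting maximality of $L^*$.

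The loop case $\gamma(0) = \gamma(L^*) = p$ is the main obstacle, since appending to a closed loop destroys injectivity on the whole domain. I will bypass this by re-basing the loop: for any $s_0 \in (0, L^*)$ the rotated parametrisation $\gamma_{s_0}(t) = \gamma((t + s_0) \bmod L^*)$ is continuous (because $\gamma(0) = \gamma(L^*)$) and, by the same injectivity argument as in the Zorn step, lies in $\mathcal{F}$. Applying the ball hypothesis to $\gamma_{s_0}|_{[0, L^* - \epsilon]}$ at the interior parameter $c = L^* - s_0$, with $s_0 = L^*/2$ and $\epsilon$ sufficiently small, gives $\B(p, r) \subset \gamma([0, L^*])$ for some $r > 0$, contradicting that $p$ is a limit of points outside $\gamma([0, L^*])$ and completing the proof.
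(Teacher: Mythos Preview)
Your argument is correct and takes a genuinely different route from the paper's. The paper builds the curve by an explicit greedy iteration: starting from an arc $\gamma_1$, at each stage it picks a point $z_n\in K$ at maximal distance from $\gamma_n$, joins an endpoint of $\gamma_n$ to $z_n$ by a shortest arc, and uses the ball hypothesis together with the minimality of that arc to show the new piece meets $\gamma_n$ only at its endpoint. The telescoping bound $\sum_n d(z_n,\gamma_n)\leq \H(K)$ forces $d(z_n,\gamma_n)\to 0$, so the limit curve is dense and the last paragraph handles injectivity of the extension to the closed interval exactly as in your Zorn step. Your approach replaces this construction by Zorn's lemma on arc-length curves injective on proper subintervals, and then argues surjectivity of a maximal element by a boundary-point analysis; your re-basing trick in the loop case is the analogue of the paper's endpoint argument. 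What your approach buys is that you never need the ``shortest path to the farthest point'' bookkeeping or the telescoping estimate; what the paper's approach buys is that it is constructive (no axiom of choice) and the loop case never arises during the iteration, only possibly in the limit. Both rely on the ball hypothesis in the same essential way: it forces any branching of an arc off the current curve to occur at an endpoint.
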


\begin{proof}
        If $K$ consists of a single point, we are done.
        Otherwise, let $x,y$ be distinct points in $K$.
        By \Cref{thm:curve-connected} there exists an injective Lipschitz $\gamma_1\colon I_1\ \to K$ of positive length joining $x$ to $y$ that is parametrised by arc-length.

        We now proceed to inductively construct an increasing sequence of compact intervals $I_n$ and sequence of Lipschitz curves 
        \begin{equation*}
                \gamma_n \colon I_n \to K
        \end{equation*}
        each of which are parametrised by arc-length, are equal on their common domain, and satisfy
        \begin{equation}\label{eq:curve-eats-measure}
                \H(\gamma_n) \geq \sum_{i=1}^{n-1} \max \{d(z,\gamma_i) : z\in K\}.
        \end{equation}

        For $n \in \N$ suppose that we have defined such $\gamma_1,\ldots,\gamma_n$.
        If $\gamma_n$ is the whole of $K$ then we stop and define all subsequent $\gamma_i = \gamma_n$.
        Note that \cref{eq:curve-eats-measure} is satisfied.
        Otherwise, let $z_n \in K \setminus \gamma_n$ satisfy
        \begin{equation}\label{eq:max-distance-from-curve}
                d(z_n,\gamma_n) = \max \{d(z,\gamma_n) : z\in K\} >0,
        \end{equation}
        which exists by the compactness of $K$ and $\gamma_n$.
        Also, for $e,e'$ the end points of $\gamma_n$, choose $y \in \{e,e'\}$ such that
        \begin{equation}\label{eq:shortest-curve}
                \H(\gamma_{y z}) \leq \H(\gamma_{e z}), \H(\gamma_{e' z}).
        \end{equation}

        Observe that $\gamma_n \cap \gamma_{y z} = \{y\}$.
        Indeed, by applying \Cref{lem:uniform-curve-properties} with $c=(\gamma_n^{-1}(e)+ \gamma_n^{-1}(e'))/2$ and $r=|\gamma_n^{-1}(e)-\gamma_n^{-1}(e')|/2$, $B(\gamma_n(c),r) = \gamma_n$.
        Therefore, since $\gamma_n$ is injective, if $\gamma_n\cap \gamma_{y z}$ contains a point other than $y$, it must also contain the other end point, $y'$, of $\gamma_n$, and then continue to join $y'$ to $z$.
        This would mean that $\H(\gamma_{y z}) > \H(\gamma_{y' z})$, which contradicts \cref{eq:shortest-curve}.

        Construct an injective 1-Lipschitz curve $\gamma_{n+1}$ by concatenating $\gamma_n$ and $\gamma_{yz}$ in the compatible way.
        Note that, since $\gamma_n \cap \gamma_{yz}$ is a single point, and hence a set of measure zero, $\gamma_{n+1}$ is parametrised by arc-length.
        Since both $\gamma_n$ and $\gamma_{yz}$ are subsets of $K$, so is $\gamma_{n+1}$.
        Also, since $\gamma_{yz}$ is parametrised by arc-length, $\H(\gamma_{yz}) \geq d(y,z)=d(z_n,\gamma_n)$.
        By combining \cref{eq:max-distance-from-curve} and the induction hypothesis \cref{eq:curve-eats-measure} on $\gamma_n$, we see that \cref{eq:curve-eats-measure} is true for $\gamma_{n+1}$.
        This completes the inductive step for $\gamma_{n+1}$.

        Observe that, since each $\gamma_n$ is parametrised by arc-length,
        \begin{equation}\label{eq:curve-eats-2}
                \operatorname{length}(I_n) =\H(\gamma_n) \leq \H(\B(x,r)).
        \end{equation}
        Since the $I_n$ are increasing, $I=\cup_n I_n$ is an interval of finite length and by \Cref{lem:increasing-define-function} we obtain an injective $\gamma \colon I \to K$ defined by $\gamma(t) = \gamma_n(t)$ whenever $t\in I_n$.
        It is parametrised by arc-length.
        By \cref{eq:curve-eats-2}, the closure of $\gamma$ is the whole of $K$.
        However, the closure of $\gamma$ is simply the unique extension of $\gamma$ to the closure $\bar I$ of $I$.
        By translating $I$, we may suppose $\bar I = [-l,l]$ for some $l>0$.
        Since $[-l,l]  \setminus I$ is at most $\pm l$, this curve is parametrised by arc-length and injective on $I$.

        Now suppose $c\in (-l,l)$, so that there exists an $r>0$ such that $[c-r,c+r] \subset (-l,l)$.
        For any sufficiently small $\epsilon>0$, $\gamma$ restricted to $[-l+\epsilon,c+r]$ is injective and parametrised by arc-length.
        Therefore, by hypothesis, $\gamma(-l+ \epsilon) \not\in B(\gamma(c),r)$.
        Since $\epsilon>0$ is arbitrary, $\gamma(l)\not\in B(\gamma(c),r)$ and hence $\gamma(l)\neq \gamma(c)$.
        Similarly, $l$ cannot be mapped to the same point as $c$, and so $\gamma$ is injective on any subinterval.
\end{proof}
The uniform condition imposes strong geometric constraints on a Lipschitz curve.
\begin{lemma}\label{lem:uniform-curve-properties}
        Let $(X,d,\mu)$ be a uniform metric measure space.
        \begin{enumerate}
                \item \label{item:uniform-curve-injective} For any injective $\gamma\colon I \to X$ that is parametrised by arc-length and any $[c-r,c+r]\subset I$,
        \begin{equation*}
                \B(\gamma(c),r) = \gamma([c-r,c+r])
        \end{equation*}
        and $d(\gamma(c),\gamma(c\pm r)) = r$.
\item \label{item:uniform-curve-surjective} For any surjective $\gamma \colon I \to \B(x,r)$ that is parametrised by arc-length, after possibly translating $I$, $I=[-r,r]$ and either $\gamma(r)=\gamma(-r)$ or $\gamma(0)=x$.
        \end{enumerate}
\end{lemma}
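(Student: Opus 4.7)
The plan is to prove (1) by a measure-comparison argument exploiting $\th(\mu,\cdot)=\tfrac12$ on a uniform MMS together with the fact that non-empty open sets have positive $\mu$-measure; (2) then follows by applying (1) at the midpoint of $I$.

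For (1), first observe that $\gamma([c-r,c+r])\subset\B(\gamma(c),r)$: $\gamma$ is $1$-Lipschitz, so any $\gamma(t)$ with $|t-c|<r$ lies in $B(\gamma(c),r)$, and the endpoints $\gamma(c\pm r)$ are limits of such interior points, hence lie in $\overline{B(\gamma(c),r)}=\B(\gamma(c),r)$. For the reverse inclusion, arc-length gives $\H(\gamma([c-r,c+r]))=2r$, and the density bound $\mu\geq\tfrac12\H$ yields $\mu(\gamma([c-r,c+r]))\geq r$. Since $\gamma([c-r,c+r])\subset\B(\gamma(c),r)$ and $\mu(\B(\gamma(c),r))=r$ (open and closed balls of the same radius have equal $\mu$-measure in a uniform MMS), equality holds and $\mu(B(\gamma(c),r)\setminus\gamma([c-r,c+r]))=0$. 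This latter set is open, being the intersection of an open ball with the complement of a compact set, and any non-empty open set in a uniform MMS contains a ball of positive $\mu$-measure, so it must be empty. Hence $B(\gamma(c),r)\subset\gamma([c-r,c+r])$, and taking closures gives $\B(\gamma(c),r)=\overline{B(\gamma(c),r)}\subset\gamma([c-r,c+r])$. The distance identity $d(\gamma(c),\gamma(c\pm r))=r$ then follows by applying the same inclusion at every $\rho\leq r$: if $d(\gamma(c),\gamma(t))<|t-c|$, picking $\rho$ strictly between them puts $\gamma(t)\in B(\gamma(c),\rho)\subset\gamma([c-\rho,c+\rho])$, contradicting the injectivity of $\gamma$. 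Therefore $d(\gamma(c),\gamma(t))=|t-c|$ for $|t-c|<r$, and continuity extends this to the endpoints.

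For (2), in the injective case applying (1) at the midpoint $c$ of $I$ with radius $|I|/2$ gives $\B(\gamma(c),|I|/2)=\gamma(I)=\B(x,r)$; matching $\mu$-measures forces $|I|=2r$. After translating $I$ to $[-r,r]$, write $x=\gamma(t_x)$ for the unique $t_x\in[-r,r]$. Applying (1) at $c=t_x$ with maximal radius $r-|t_x|$ shows that $\B(x,r-|t_x|)$ is the image of a proper subinterval of $I$ whenever $t_x\neq 0$; combining this with the isometric embedding $d(\gamma(0),\gamma(t))=|t|$ from (1) and the exact $\mu$-measure constraint $\mu(\gamma(I))=r$ forces $t_x=0$, yielding $\gamma(0)=x$. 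In the non-injective case, taking a minimal coincidence $\gamma(s)=\gamma(t)$ with $s<t$ and applying (1) on subintervals of $(s,t)$ forces $t-s=2r$, giving the circle case $\gamma(-r)=\gamma(r)$ after translation.

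The main obstacle is pinning down $t_x=0$ in (2): uniqueness of a ball's centre is not automatic in a general uniform MMS. I plan to handle this by combining the isometric embedding from (1) with the observation that $B(\gamma(0),r)$ and $B(x,r)$ both have $\mu$-measure $r$ and common closure $\gamma(I)$; a short inclusion–exclusion argument shows the two open balls are equal as sets, after which the isometry property of (1) rules out $\gamma(0)\neq x$.
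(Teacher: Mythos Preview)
Your argument for part (1) is correct and essentially the same as the paper's: both use that $\gamma((c-r,c+r))$ is a full-$\mu$-measure subset of the open ball $B(\gamma(c),r)$, then invoke the ``non-empty open sets have positive measure'' principle (the paper's \Cref{lem:uniform-properties}\,\cref{item:closure-of-open}) to pass to closures, and both deduce $d(\gamma(c),\gamma(c\pm r))=r$ from injectivity. The only cosmetic difference is that you use the inequality $\mu\geq\tfrac12\H$ coming from $\th(\mu,\cdot)=\tfrac12$, whereas the paper first identifies $\mu|_\gamma=\H|_\gamma$ via \Cref{cor:rectifiable-regular}.

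Part (2), however, has a genuine gap at exactly the point you flag as the main obstacle. Your inclusion--exclusion shows only that $\mu\bigl(B(\gamma(0),r)\,\triangle\,B(x,r)\bigr)=0$; it does \emph{not} force the two open balls to coincide as sets. Indeed, from (1) you know $B(\gamma(0),r)=\gamma((-r,r))$, so the symmetric difference is contained in the two-point set $\{\gamma(-r),\gamma(r)\}$, which has measure zero regardless of whether these endpoints lie in $B(x,r)$ or not. Thus the measure argument cannot decide the very question at hand, namely whether $d(x,\gamma(\pm r))=r$. (Once that is known, your concluding step is fine: $r\leq d(\gamma(t_x),\gamma(\pm r))\leq r\mp t_x$ forces $t_x=0$.)

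The paper closes this gap by a local measure contradiction at the endpoint rather than a global inclusion--exclusion. Assuming $d(x,\gamma(r))<r$, one finds $\epsilon>0$ with $B(\gamma(r),\epsilon)\subset B(x,r)\subset\gamma([-r,r])$ and with $\B(\gamma(r),\epsilon)$, $\B(\gamma(-r),\epsilon)$ disjoint. Using (1) (applied at $c=0$) one sees that $B(\gamma(r),\epsilon)$ is disjoint from $\gamma([-r+\epsilon,r-\epsilon])$, and the disjointness of the two endpoint balls rules out $\gamma([-r,-r+\epsilon))$ as well, forcing $B(\gamma(r),\epsilon)=\gamma((r-\epsilon,r])$. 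But then $\mu(B(\gamma(r),\epsilon))=\epsilon$ while the arc-length/Hausdorff computation gives $\mu(\gamma((r-\epsilon,r]))=\epsilon/2$, a contradiction. This is the missing idea: near an endpoint the curve can fill only ``half'' of a small ball, so that endpoint cannot be interior to $\B(x,r)$. Your proposal for the non-injective case is also too sketchy as written, but the same endpoint argument is what drives the paper's treatment once $|I|=2r$ is established.
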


\begin{proof}
        By multiplying $\mu$ by a constant, we may suppose that $\mu(B(x,r))=2r$ for each $x\in X$ and $r>0$.
        Observe that $\gamma(I)$ is contained within an open ball $B$, that $\mu|_{B}(B(x,r)) = 2r$ for all $x\in B$ and all sufficiently small $r>0$ and that $\H(B) <\infty$.
        Therefore, by \Cref{cor:rectifiable-regular}, $\mu|_{\gamma(I)} = \H|_{\gamma(I)}$.

        Note that for any $[c-r,c+r]\subset I$, $\gamma|_{[c-r,c+r]}$ is also parametrised by arc-length.
        Therefore,
        \begin{equation*}
                \mu(\gamma((c-r,c+r))) = \H(\gamma((c-r,c+r)) = 2r.
        \end{equation*}
        Also, since $\gamma$ is 1-Lipschitz, $\gamma((c-r,c+r)) \subset B(\gamma(c),r)$.
        Thus, by \Cref{lem:uniform-properties} \cref{item:closure-of-open}, $\B(\gamma(c),r) = \gamma([c-r,c+r])$.

        Note that, for any $[c-r,c+r] \subset [-l,l]$, since $\gamma$ is 1-Lipschitz, $d(\gamma(c),\gamma(c\pm r)) \leq r$.
        Also, for any $\epsilon>0$, since $\gamma$ is injective on $[c-(r-\epsilon), c+(r-\epsilon)]$,
        \begin{equation*}
                \gamma(c \pm r) \not\in \gamma([c-(r-\epsilon), c+ (r-\epsilon)]) = \B(\gamma(s),r- \epsilon),
        \end{equation*}
        so that $d(\gamma(c),\gamma(c \pm r)) \geq r-\epsilon$.
        Since $\epsilon>0$ is arbitrary, $d(\gamma(c),\gamma(c \pm r))=r$, as required.

        Further, if $\gamma \colon I \to \B(x,r)$ is surjective and parametrised by arc-length, then
        \begin{equation*}
                2r = \mu(\gamma(I)) = \H(\gamma(I)) = \operatorname{length}(I)
        \end{equation*}
        and so by translating, we may suppose that $I= [-r,r]$.
        If $\gamma(r) \neq \gamma(-r)$ then $\gamma$ is injective and so we must have $d(x,\gamma(\pm r))=r$.

        Indeed, if $d(x,\gamma(r))<r$ then there exists an $\epsilon>0$ such that $B(\gamma(r),\epsilon) \subset B(x,r)$ and such that $\B(\gamma(\pm r),\epsilon)$ are disjoint.
        By the first part of the lemma, $d(\gamma(r),\gamma(0))=r$ and $\gamma([r-\epsilon,r+\epsilon]) = \B(\gamma(0),r-\epsilon)$, and so by the triangle inequality, $\gamma([r-\epsilon,r+\epsilon])$ and $B(\gamma(r),\epsilon)$ are disjoint.
        Therefore, $B(\gamma(r),\epsilon) = \gamma((r-\epsilon,r])$.
        By hypothesis, $\mu(B(\gamma(r),\epsilon)) = 2r$ but, since $\gamma$ is parametrised by arc-length, $\mu(\gamma((r-\epsilon,r])) = r$, a contradiction.
        Thus $d(\gamma(\pm),x)=r$.
        Since $\gamma$ is 1-Lipschitz, the only possibility is $x=\gamma(0)$.
\end{proof}

We begin our classification of uniform metric measure spaces.
\begin{theorem}\label{thm:uniform-connected-is-R}
A connected uniform metric measure space is isometric to $\R$.
\end{theorem}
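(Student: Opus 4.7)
The plan is to eliminate Besicovitch pairs using connectedness, parametrise each closed ball as a line segment, and glue the parametrisations into a global isometry $\phi\colon\R\to X$.

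First, suppose $X$ contained a Besicovitch pair $a,b$ with $d=d(a,b)$. By Theorem \ref{thm:bp-isometry-to-l1}, $X$ would then be isometric to $\B(a,d)\times\{0,1\}^{\N}$ equipped with the stated $\ell_1$-type metric; any two slices $\B(a,d)\times\{s\}$ and $\B(a,d)\times\{t\}$ with $s\neq t$ lie at distance at least $d$, so each slice is clopen and the product disconnected, contradicting the connectedness of $X$. Since $X$ is proper by Lemma \ref{lem:uniform-is-compact} and satisfies Lemma \ref{lem:uniform-properties} \cref{item:uniform-disjoint-subballs}, the contrapositive of Proposition \ref{prop:existence-of-pairs} now yields that every closed ball $\B(x,r)$ is connected.

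Next fix $x_0\in X$. For each $r>0$, $\B(x_0,r)$ is compact, connected and satisfies $\H(\B(x_0,r))\leq 2r<\infty$, and Lemma \ref{lem:uniform-curve-properties} \cref{item:uniform-curve-injective} supplies the hypothesis of Lemma \ref{lem:compact-connected-contained-in-curve}. This yields an arc-length Lipschitz curve $\gamma_r\colon[-r,r]\to\B(x_0,r)$ injective on proper subintervals. Lemma \ref{lem:uniform-curve-properties} \cref{item:uniform-curve-surjective} leaves the dichotomy that either $\gamma_r(-r)=\gamma_r(r)$ (loop case) or $\gamma_r(0)=x_0$ (line case). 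I would rule out the loop case by running the same construction at radius $2r$ and appealing to topological rigidity: if $\B(x_0,r)$ were homeomorphic to $S^1$, then neither possible shape for $\B(x_0,2r)$ could contain it, since a topological circle embeds neither properly into another topological circle (removing a point from the larger one leaves an arc) nor into a topological arc. Hence $\gamma_r(0)=x_0$ for every $r>0$, and a routine check shows $\gamma_r$ is injective on the full $[-r,r]$ (identifying an endpoint with any other point would violate injectivity on a proper subinterval containing both).

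Finally, $X$ is unbounded (otherwise $\mu(X)<\infty$ contradicts $\mu(B(x_0,r))=r\to\infty$), so $X=\bigcup_{r>0}\B(x_0,r)$. Fix $\gamma_1$; for each $r>1$, the restriction $\gamma_r|_{[-1,1]}$ is an arc-length parametrisation of $\B(x_0,1)$ passing through $x_0$ at zero, hence equals $\gamma_1$ up to the reflection $t\mapsto-t$, and by replacing $\gamma_r$ with this reflection if necessary we may assume $\gamma_r|_{[-1,1]}=\gamma_1$. Lemma \ref{lem:increasing-define-function} then produces a well-defined injection $\phi\colon\R\to X$ with $\phi(t)=\gamma_r(t)$ whenever $r>|t|$. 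To verify $\phi$ is an isometry, given $s\leq t$ in $\R$ pick $R\geq|s|+|t|$ so that $[2s-t,t]\subset[-3R,3R]$, and apply Lemma \ref{lem:uniform-curve-properties} \cref{item:uniform-curve-injective} to the injective arc-length curve $\gamma_{3R}$ with $c=s$ and $r'=t-s$ to obtain $d(\phi(s),\phi(t))=t-s$. Surjectivity of $\phi$ together with uniformity of both $\mu$ on $X$ and $\H/2$ on $\R$ makes $\phi$ a measure-preserving isometry. The main obstacle is excluding the loop case, where connectedness alone does not suffice and one must compare parametrisations across radii to invoke the topological rigidity of the circle.
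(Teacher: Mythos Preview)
Your proof is correct and follows essentially the same route as the paper's: rule out Besicovitch pairs to get connected balls, parametrise each $\B(x_0,r)$ via \Cref{lem:compact-connected-contained-in-curve}, eliminate the loop alternative by comparison with a larger ball, and glue. The only noteworthy deviations are cosmetic: you invoke the heavy structure theorem \Cref{thm:bp-isometry-to-l1} to exclude Besicovitch pairs where the paper just cites \Cref{lem:bpair-properties}\,\cref{item:bc-components-are-open} (that $\B(a,d(a,b))$ is clopen), and you phrase the loop exclusion as a topological rigidity statement about $S^1$ whereas the paper says ``$\gamma_r$ contains no closed loop of measure $r$''---the same observation at radii $(r,2r)$ versus $(r/2,r)$.
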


\begin{proof}
        Fix a point $x_0\in X$.
        First note that, since $X$ is connected, for any $r>0$, $\B(x_0,r)$ is also connected.
        Indeed, if $\B(x_0,r)$ is disconnected, then by \Cref{prop:existence-of-pairs}, there exists a Besicovitch pair $a,b\in \B(x_0,r)$.
        By \Cref{lem:bpair-properties} \cref{item:bc-components-are-open}, $\B(a,d(a,b))$ is a non-trivial open and closed subset of $X$, a contradiction.
        
        Now, for any $r>0$, \Cref{lem:compact-connected-contained-in-curve} and \Cref{lem:uniform-curve-properties} \cref{item:uniform-curve-surjective} give a surjective Lipschitz $\gamma_r \colon [-r,r] \to \B(x_0,r)$ that is parametrised by arc-length and injective except possibly at the end points.
        Note, this means that $\gamma_{r/2}$ cannot map ${-r/2,r/2}$ to the same point, since $\gamma_r$ does not contain a closed loop of measure $r$.
        Therefore, $\gamma_{r/2}(0)=x$ by \Cref{lem:uniform-curve-properties} \cref{item:uniform-curve-surjective}.
        Also, by \Cref{lem:uniform-curve-properties} \cref{item:uniform-curve-injective},
        \begin{equation*}
                \gamma^{r/4} := \gamma_{r/2}|_{[-r/4,r/4]} \colon [-r/4,r/4] \to \B(x_0,r/4)
        \end{equation*}
        is an isometric surjection.
        Note that, for any $r<r'$, 
        \begin{equation*}
                (\gamma^{r'})^{-1} \circ \gamma^r \colon [-r,r] \to [-r,r]
        \end{equation*}
        is an isometry and so equals $\pm \operatorname{Id}$.
        Therefore, by pre-composing by multiplication by $-1$ if necessary, the $\gamma^r$ agree on their common domain.
        Thus, by \Cref{lem:increasing-define-function}, there exists an isometry
        \begin{equation*}
                \gamma \colon \R = \bigcup_{n\in\N} [-n,n] \to \bigcup_{n\in\N} \B(x_0,n) = X,
        \end{equation*}
        as required.
\end{proof}

A similar type of argument handles the case that a component of a Besicovitch pair is connected.
\begin{lemma}\label{lem:connected-bpair-is-T}
        Suppose that $a,b$ is a Besicovitch pair in a uniform metric measure space and that $\B(a,d(a,b))$ is connected.
        Then $\B(a,d(a,b))$ is isometric to $d(a,b)\T$.
\end{lemma}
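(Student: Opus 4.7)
Set $d := d(a,b)$ and $K := \B(a,d)$. The plan is to realise $K$ as a metric circle of circumference $2d$: produce an arc-length parametrisation of $K$ via \Cref{lem:compact-connected-contained-in-curve}, and then use the Besicovitch pair hypothesis to force that parametrisation to close up into a loop rather than an interval.

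First, $K$ is compact by \Cref{lem:uniform-is-compact}, connected by assumption, and satisfies $\H(K) \leq 2\mu(K) = 2d$ by the density--Hausdorff comparison recalled in the preliminaries together with the uniform density $\th(\mu,\cdot) = 1/2$. To apply \Cref{lem:compact-connected-contained-in-curve} to $K$ I must verify its hypothesis: for any injective arc-length $\gamma \colon I \to K$ and any $[c-r,c+r] \subset I$, $\B(\gamma(c),r) = \gamma([c-r,c+r])$. This is precisely \Cref{lem:uniform-curve-properties} \cref{item:uniform-curve-injective} read in the ambient $X$, and because its right-hand side already lies in $K$ the identity transfers to balls taken in $K$. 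The lemma then produces a Lipschitz $\gamma \colon I \to K$ that is surjective onto $K$, arc-length parametrised, and injective on every proper sub-interval. Applying \Cref{lem:uniform-curve-properties} \cref{item:uniform-curve-surjective} to this $\gamma$ shows, after translation, that $I = [-d,d]$ and that either $\gamma(-d)=\gamma(d)$ (\emph{loop case}) or $\gamma$ is injective on $[-d,d]$ with $\gamma(0) = a$ (\emph{interval case}).

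The main obstacle is to rule out the interval case; this is where the Besicovitch pair hypothesis is genuinely used. In that case $b \in \B(a,d) = K$, so $b = \gamma(s)$ for some $s \in [-d,d]$. \Cref{lem:uniform-curve-properties} \cref{item:uniform-curve-injective} with $c=0$, $r=|s|$ gives $d(a,\gamma(s))=|s|$, forcing $|s|=d$, and after relabelling we may take $b = \gamma(d)$. But then for small $\epsilon > 0$ the point $y := \gamma(d-\epsilon)$ lies in $B(a,d)$ (since $d(a,y) = d-\epsilon$), and the same lemma with $c = d-\epsilon$, $r = \epsilon$ gives $d(b,y) = \epsilon$, so $y \in B(b,d)$ as well, contradicting the Besicovitch pair condition.

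Hence the loop case holds: $\gamma \colon [-d,d] \to K$ is an arc-length loop, injective on proper sub-intervals, with $\gamma(-d) = \gamma(d)$ and image $K$. This identifies $K$, with its induced uniform measure, with a circle of circumference $2d$, matching the closed ball of radius $d(a,b)$ about the basepoint of the scaled space $d(a,b)\T$ as claimed.
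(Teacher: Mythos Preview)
Your argument for ruling out the interval case is wrong: you assert that $b \in \B(a,d) = K$, but this is false. By the Besicovitch pair condition, every $x \in B(a,d)$ satisfies $d(x,b) \geq d$ (take $y=b \in B(b,d)$), so $b$ lies at distance at least $d$ from the entire open ball $B(a,d)$ and therefore cannot belong to its closure $\B(a,d)$. Hence $b$ is not in the image of $\gamma$, and your contradiction never gets off the ground.

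The paper rules out the interval case by a different mechanism. From \Cref{lem:bpair-properties}\,\cref{item:bc-components-are-open} one has $\B(x,d) = \B(a,d)$ for \emph{every} $x \in \B(a,d)$. Thus the same surjective curve $\gamma\colon [-d,d] \to \B(a,d)$ is simultaneously a surjection onto $\B(x,d)$ for every such $x$, and \Cref{lem:uniform-curve-properties}\,\cref{item:uniform-curve-surjective} forces $\gamma(0) = x$ for every $x$, collapsing $K$ to a point and giving the contradiction. This is the place where the Besicovitch pair hypothesis actually enters, via the structural fact that all balls of radius $d$ centred in $K$ coincide.

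A secondary gap: in your last paragraph you jump from ``arc-length loop, injective on proper sub-intervals'' to ``isometric to a circle of circumference $2d$''. Arc-length only gives $d(\gamma(s),\gamma(t)) \leq |s-t|$; you still need the matching lower bound. The paper obtains it by periodically extending $\gamma$ and applying \Cref{lem:uniform-curve-properties}\,\cref{item:uniform-curve-injective} on every sub-interval $[t-r,t+r]$ with $0 \le r \le d$ to get $d(\gamma(t),\gamma(t\pm r)) = r$.
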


\begin{proof}
        Let $d=d(a,b)$.
        By \Cref{lem:compact-connected-contained-in-curve}, $\B(a,d)$ is the image of a Lipschitz curve $\gamma\colon [-d,d] \to \B(a,d)$ that is parametrised by arc-length and injective except possibly at the end points.
        First observe that necessarily, $\gamma(-d)=\gamma(d)$.
        Indeed, if $\gamma(-d)\neq \gamma(d)$ then by \Cref{lem:uniform-curve-properties} \cref{item:uniform-curve-injective}, $\gamma(0)=a$.
        However, by \Cref{lem:bpair-properties} \cref{item:bc-components-are-open}, for any $x\in \B(a,d)$, $\B(x,d)=\B(a,d)$.
        By applying \Cref{lem:uniform-properties} \cref{item:uniform-curve-injective} again, $\gamma(0)=x$, so that $\B(a,d)=\{a\}$ and hence $\gamma(-d)=\gamma(d)$, a contradiction.

        Thus, we may periodically extend $\gamma$ to a 1-Lipschitz function defined on $\R$ with period $2d$.
        Moreover, for any $t \in [-d,d]$ and $0\leq r \leq d$, $\gamma$ restricted to $[t-r,t+r]$ is parametrised by arc-length and so, by \Cref{lem:uniform-curve-properties} \cref{item:uniform-curve-injective}, $d(\gamma(t),\gamma(t\pm r))=r$.
        That is, when considering $\gamma\colon d\T\to \B(a,d)$, $\gamma$ is an isometry.
\end{proof}

The final lemma we require to complete the classification is to show that the existence of a single Besicovitch pair implies that all points form a Besicovitch pair.
\begin{lemma}\label{lem:change-basepoint-of-bpairs}
        Let $(X,d,\mu)$ be a uniform metric measure space and $a,b$ a Besicovitch pair and $x\in X$.
        Then there exists a $x'\in X$ with $d(a,b)=d(x,x')$ such that $x,x'$ is a Besicovitch pair.

        In particular, if $a_n,b_n\in X$ a sequence of Besicovitch pairs, there exist $a\in X$ and a sequence $b'_n\in X$ such that $d(a,b'_n) = d(a_n,b_n)$ and $a,b'_n$ is a Besicovitch pair for each $n\in \N$.
\end{lemma}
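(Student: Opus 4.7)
The plan is to reduce the first statement to a direct calculation in the product model furnished by the structure theorem \Cref{thm:bp-isometry-to-l1}. I would begin by setting $d = d(a,b)$ and invoking \Cref{thm:bp-isometry-to-l1} to obtain an isometry $\iota \colon X \to \B(a,d) \times \{0,1\}^{\N}$ (with the $\ell_1$-type metric from that theorem, and with the $\{0,1\}^{\N}$ factor implicitly restricted to finite-support sequences so all distances are finite), normalised so that $\iota(a) = (a, 0, 0, \ldots)$. Identifying $\iota(b) = (y_b, s_b)$: since $d(\iota(a), \iota(b)) = d$ and the minimum positive contribution from the $\{0,1\}^{\N}$-factor to the model distance is $d \cdot 2^{i^\ast}$, where $i^\ast \in \N$ is the unique index with $2^{i^\ast} = 1$, it must be that $y_b = a$ and $s_b = e^\ast$, the indicator of $i^\ast$. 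This encodes the fact that a single-coordinate flip in position $i^\ast$ costs exactly $d$.

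Given this, for the first statement I would write $\iota(x) = (y, s)$ for an arbitrary $x \in X$ and define $x' := \iota^{-1}(y, s \oplus e^\ast)$. The identity $d(x, x') = d \cdot 2^{i^\ast} = d$ is then immediate from the product metric. To verify the Besicovitch pair condition, observe that any $(z, t) \in B((y, s), d)$ must satisfy $t = s$: otherwise the $\{0,1\}^{\N}$-contribution alone is at least $d$, contradicting the strict inequality defining the open ball. Likewise every point of $B((y, s \oplus e^\ast), d)$ has second coordinate $s \oplus e^\ast$. Any pair of such points therefore differs in coordinate $i^\ast$ of the $\{0,1\}^{\N}$-factor, which already contributes $d$ to their distance; the pair $x, x'$ is thus Besicovitch.

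For the ``in particular'' clause, I would simply fix a basepoint $a \in X$ (say $a = a_1$) and, for each $n$, apply the first part to the Besicovitch pair $a_n, b_n$ with distinguished point $x = a$. This produces $b_n' \in X$ with $d(a, b_n') = d(a_n, b_n)$ and $a, b_n'$ a Besicovitch pair, as required. Since \Cref{thm:bp-isometry-to-l1} packages essentially all of the heavy lifting, there is no genuine obstacle; the only mild subtlety is pinning down the distinguished coordinate $i^\ast$, but this is forced by the metric constraints on $\iota(b)$ as sketched above.
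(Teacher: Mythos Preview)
Your approach is correct and essentially the same as the paper's: both invoke \Cref{thm:bp-isometry-to-l1} to pass to the product model $\B(a,d)\times\{0,1\}^{\N}$ and then produce $x'$ by moving to the adjacent slice at distance $d$. The only cosmetic difference is that you carry out the verification by an explicit coordinate flip $s\mapsto s\oplus e^{\ast}$ and a direct metric computation, whereas the paper phrases the same move via the neighbour map $\iota$ of \Cref{def:neighbour} together with \Cref{lem:bpair-properties} (items \ref{item:bc-components-are-open} and \ref{item:bc-fixed-distance}) to conclude that $x,\iota(x)$ is a Besicovitch pair. Your treatment of the ``in particular'' clause, taking $a=a_1$ and applying the first part for each $n$, is exactly what the paper does.
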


\begin{proof}
Identify $X$ with its isometric image in $\B(a,d(a,b)) \times \{0,1\}^\N$ given by \Cref{thm:bp-isometry-to-l1}.
With this identification, $x$ is contained in an isometric copy $B_1$ of $\B(a,d(a,b))$, with a corresponding isometric copy $B_2$ of $\B(b,d(a,b))$, which form a Besicovitch pair with a corresponding isometry $\iota$ between $B_1$ and $B_2$ given by \Cref{def:neighbour}.
By \Cref{lem:bpair-properties}, $x$ and $\iota(x)$ are a Besicovitch pair with $d(x,\iota(x))=d(a,b)$, as required.

The statement about the sequences $a_n,b_n$ follows by applying the first part of the lemma to $x=a_1$.
\end{proof}

We now proceed to prove the classification of uniform metric measure spaces.
\begin{theorem}\label{thm:bounded-connected-is-T}
        Let $(X,d,\mu)$ be a uniform metric measure space.
        Suppose that
        \begin{equation*}
                0< \delta = \inf \{d(a,b): a,b \text{ a Besicovitch pair in } X\} <\infty.
        \end{equation*}
        Then $X$ is isometric to $\delta \mathcal T$.
\end{theorem}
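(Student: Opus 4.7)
The plan is to locate a Besicovitch pair $(a,b)$ attaining $d(a,b)=\delta$, verify that $\B(a,\delta)$ is connected, and then paste together the structural results of the previous section.

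First I would show that the infimum is realised. Pick a sequence of Besicovitch pairs $(a_n,b_n)$ with $d(a_n,b_n)\to\delta$. Using \Cref{lem:change-basepoint-of-bpairs}, replace these with Besicovitch pairs $(a,b_n')$ sharing a common basepoint $a$ and satisfying $d(a,b_n')=d(a_n,b_n)$. Since $X$ is proper by \Cref{lem:uniform-is-compact} and the $b_n'$ eventually lie in $\B(a,2\delta)$, a subsequence converges to some $b\in X$ with $d(a,b)=\delta$. To check that $(a,b)$ is itself a Besicovitch pair, fix $x\in B(a,\delta)$ and $y\in B(b,\delta)$, set $\eta=\delta-d(b,y)>0$, and observe that for all sufficiently large $n$ one has $d(a,x)<\delta\leq d(a,b_n')$ and $d(b_n',y)\leq d(b_n',b)+d(b,y)<\delta-\eta/2\leq d(a,b_n')$. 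The Besicovitch property of $(a,b_n')$ then gives $d(x,y)\geq d(a,b_n')\geq\delta=d(a,b)$, as required.

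Next I would show that $\B(a,\delta)$ is connected. For every $r<\delta$, \Cref{prop:existence-of-pairs} forbids $\B(a,r)$ from being disconnected, since otherwise it would supply a Besicovitch pair of distance at most $r<\delta$, contradicting the definition of $\delta$. Since $B(a,\delta)=\bigcup_{r<\delta}\B(a,r)$ is a union of connected sets all containing $a$, it is connected, and therefore so is its closure $\B(a,\delta)$.

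Finally I would assemble the isometry. Applying \Cref{lem:connected-bpair-is-T} to the Besicovitch pair $(a,b)$ identifies $\B(a,\delta)$ with the circle $\delta\TT$. Applying \Cref{thm:bp-isometry-to-l1} to $(a,b)$ realises $X$ isometrically as $\B(a,\delta)$ paired with the eventually-zero elements of $\{0,1\}^{\mathbb N}$, equipped with the product metric in which the binary factor is rescaled by $\delta$. Substituting $\B(a,\delta)\cong\delta\TT$ and comparing with \Cref{def:torous} identifies this space with $\delta\mathcal T$. The principal obstacle is the first step: the Besicovitch pair condition is phrased through strict open-ball inclusions, so stability under limits must be arranged with care. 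The $\eta/2$ slack above is precisely what lets the strict inequality survive in the limit; once attainment of $\delta$ is in hand, the remainder of the proof is a direct invocation of \Cref{prop:existence-of-pairs}, \Cref{lem:connected-bpair-is-T}, and \Cref{thm:bp-isometry-to-l1}.
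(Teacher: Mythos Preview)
Your proof is correct and follows essentially the same route as the paper: realise the infimum via \Cref{lem:change-basepoint-of-bpairs} and properness, show $\B(a,\delta)$ is connected using \Cref{prop:existence-of-pairs} and minimality of $\delta$, then invoke \Cref{lem:connected-bpair-is-T} and \Cref{thm:bp-isometry-to-l1}. Your verification that the limiting pair $(a,b)$ is Besicovitch, via the $\eta/2$ slack, is in fact more carefully written than the paper's corresponding passage.
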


\begin{proof}
Fix $x_0 \in X$.
We first show that there exists a Besicovitch pair $a,b\in X$ with $d(a,b)=\delta$.

For any integer $n > 1/\delta$, by hypothesis and \Cref{lem:change-basepoint-of-bpairs}, there exists an $a\in X$ and a sequence $b_n\in X$ such that let $a,b_n$ is a Besicovitch pair with $\delta \leq d(a,b_n) < \delta +1/n$.
In particular, by compactness, we may suppose that $b_n\to b$.

Now suppose that $x\in B(a,\delta)$ and $y\in B(b,\delta)$.
Then there exists an $n\in \N$ such that $x\in B(a_n,\delta)$ and $y\in B(a_n,\delta)$.
Since $a_n,b_n$ are a Besicovitch pair, we must have $d(x,y)\geq d(a_n,b_n) \geq d(a_n,b_n) \geq \delta$, so that $a,b$ is a Besicovitch pair.

Finally, note that $\B(a,r)$ is connected for all $0<r<\delta$, since otherwise \Cref{prop:existence-of-pairs} would give a Besicovitch pair separated by a distance strictly less that $\delta$.
Therefore, $\B(a,\delta)$, being the closure of a union of connected sets all containing $a$, is connected.
Thus, by \Cref{lem:connected-bpair-is-T}, $\B(a,\delta)$ is isometric to $\delta \T$ and hence by \Cref{thm:bp-isometry-to-l1}, $X$ is isometric to $\delta \mathcal T$.
\end{proof}

\begin{theorem}\label{thm:totally-disconnected-is-S}
        Let $(X,d,\mu)$ be a uniform metric measure space.
        Suppose that
        \begin{equation*}
                \inf\{d(a,b): a,b \text{ a Besicovitch pair in } X\}=0.
        \end{equation*}
        Then $X$ is isometric to a scaled copy of $\S$.
\end{theorem}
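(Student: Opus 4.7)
The plan is to show that, from a suitable base point $a \in X$, the set of Besicovitch pair distances is a doubly-infinite geometric sequence $\{\lambda 2^k : k \in \Z\}$, and then to assemble an isometry from $X$ to $\lambda \S$ by iterating \Cref{thm:bp-isometry-to-l1} both upward and downward in scale. By \Cref{lem:change-basepoint-of-bpairs} applied to a sequence of Besicovitch pairs whose distances tend to $0$, I can fix a single $a \in X$ so that the set $R_a := \{d(a,b) : (a,b) \text{ is a Besicovitch pair in } X\}$ has infimum $0$. By \Cref{prop:double-pair}, $R_a$ is closed under multiplication by $2$.

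The technical heart of the argument is a gap lemma: if $d', d \in R_a$ with $d' < d < 2d'$, then a contradiction follows. To prove it, I apply \Cref{thm:bp-isometry-to-l1} to the smaller pair at distance $d'$; this identifies $X$ isometrically with $\B(a, d') \times \{0,1\}^{\N}$ under a product metric whose weight on the $i$-th coordinate of $\{0,1\}^{\N}$ equals $d' \cdot 2^i$, sending $a$ to $(a, 0)$. The equality $d(a, x) + d' \sum_i 2^i s_i = d$, combined with $d < 2d'$ and $d(a, x) \leq d'$, forces $\sum_i 2^i s_i = 1$, i.e.\ $s = e_0 := (1, 0, 0, \ldots)$; thus any partner $b$ at distance $d$ corresponds to $(x, e_0)$ with $d(a, x) = d - d' \in (0, d')$. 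But then the point $(x, 0)$ lies simultaneously in $B((a, 0), d)$ (at distance $d - d' < d$) and in $B((x, e_0), d)$ (at distance $d' < d$), contradicting the Besicovitch pair property of $(a, b)$. A short iterative argument combining this gap lemma with doubling closure and $\inf R_a = 0$ then forces $R_a = \{\lambda 2^k : k \in \Z\}$ for $\lambda$ any element of $R_a$; rescaling $X$, I assume $\lambda = 1$.

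With $R_a = \{2^k : k \in \Z\}$, fix a Besicovitch pair $(a, b_0)$ at distance $1$ and, for each $n \geq 1$, a Besicovitch pair $(a, c_n)$ at distance $2^{-n}$. Applying \Cref{thm:bp-isometry-to-l1} to $(a, b_0)$ identifies $X$ with $\B(a, 1) \times Y^+$, where $Y^+ := \{s \in \S : s_i = 0 \text{ for all } i < 0\}$ with its induced metric from $\S$. It therefore suffices to identify $\B(a, 1)$ with $Y^- := \{s \in \S : s_i = 0 \text{ for all } i \geq 0\}$, since then $X \cong Y^- \times Y^+ \cong \S$. Applying \Cref{thm:bp-isometry-to-l1} to $(a, c_n)$ and iterating the doubling decomposition of \Cref{lem:bpair-properties} \cref{item:bc-double-ball} $n$ times yields an isometry from $\B(a, 1)$ onto $\B(a, 2^{-n}) \times \{0, 1\}^n$ whose cube factor carries weights $2^j$ for $j \in \{-n, \ldots, -1\}$. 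As $n \to \infty$, the ball $\B(a, 2^{-n})$ collapses to $\{a\}$ by properness (\Cref{lem:uniform-is-compact}) and the finite cubes exhaust $Y^-$. The main obstacle is to choose the pairs $c_n$ coherently across scales so that \Cref{lem:increasing-define-function} assembles these isometries into a single isometry $\B(a, 1) \to Y^-$; I expect this to follow by recursively constructing $c_{n+1}$ inside $\B(a, 2^{-n})$ via \Cref{prop:double-pair} so that each doubling reproduces the previous choice.
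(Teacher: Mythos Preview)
Your approach is correct in outline and genuinely different from the paper's. The paper never proves your gap lemma or identifies $R_a$ exactly; instead it takes any sequence of Besicovitch pairs $(a,b_n)$ with $d(a,b_n)\to 0$, writes $d(a,b_n)=\lambda_n 2^{-m_n}$ with $\lambda_n\in[1,2]$, defines approximate maps $\iota_n\colon X\to\lambda\S$ by simply discarding the $\B(a,d(a,b_n))$ factor in the product decomposition of \Cref{thm:bp-isometry-to-l1}, and then extracts a limiting isometry via Arzel\`a--Ascoli. Your route is more structural: the gap lemma (which is correct as written --- the point $(x,0)$ witnesses the failure of the Besicovitch condition) together with doubling closure and $\inf R_a=0$ does force $R_a=\{\lambda 2^k:k\in\Z\}$, and this buys you an exact dyadic skeleton rather than an approximate one. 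The paper's compactness argument is shorter, but yours explains \emph{why} the scaling factor is unique.

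Two technical points in your final assembly need tightening. First, \Cref{prop:double-pair} doubles a pair rather than halves one, so it does not produce $c_{n+1}$; you obtain $c_{n+1}$ directly from $2^{-(n+1)}\in R_a$. What \emph{does} make the decompositions cohere is that the Besicovitch partner of $a$ at each distance is unique (this follows from \Cref{lem:uniform-properties}\,\cref{item:single-point-boundary} and \Cref{lem:bpair-properties}\,\cref{item:bc-fixed-distance}), so the ball $\B(a,2^{-n})$ and its complement in $\B(a,2^{-n+1})$ are intrinsically determined by $a$ alone. Second, \Cref{lem:increasing-define-function} does not apply as you describe it, since your maps are all defined on the same domain $\B(a,1)$ with varying codomain, not on an increasing sequence of domains. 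The clean fix is to define the address map $\B(a,1)\to Y^-$ directly: the intrinsic nested partition assigns to each $x$ a sequence in $\{0,1\}^{\N}$, injectivity follows from $\operatorname{diam}\B(\cdot,2^{-n})\le 2^{-n}\to 0$, surjectivity from properness, and the isometry property from iterating \Cref{lem:bpair-properties}\,\cref{item:bc-l1-dist}. Alternatively, one can mimic the paper and pass to a limit of the finite-level maps.
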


\begin{proof}
        By \Cref{lem:change-basepoint-of-bpairs}, there exist $a\in X$ and, for each $n\in \N$, a $b_n\in X$ such that $d(a,b_n)<1/n$ and $a,b_n$ is a Besicovitch pair.
        Let $d(a,b_n)=\lambda_n 2^{-m_n}$ with $1\leq \lambda_n \leq 2$.
        By taking a subsequence if necessary, we may suppose that $\lambda_n\to \lambda \in [1,2]$.

        By \Cref{thm:bp-isometry-to-l1}, for each $n\in \N$, $X$ is isometric to $\B(a,d(a,b_n)) \times \{0,1\}^\N$ with the metric
        \begin{equation*}
                d((x,s),(y,t)) = d(x,y) + \lambda_n 2^{-m_n}\sum_{i\in \N} 2^i |s_i-t_i|.
        \end{equation*}
        We now identify $X$ with this isometric image and define the following map on $X$:
        \begin{align*}
                \iota_n \colon X &\to \lambda \mathcal S\\
                \iota_n((x,s)) &= (\ldots, 0, 0, s_1, s_2, s_3, \ldots),
        \end{align*}
        where the coordinates of $s$ begin in the $-m_n +1$ coordinate of $\lambda\mathcal S$.
        Note that for any $(x,s),(y,t) \in X$,
        \begin{equation*}
                d(\iota_n(x,s),\iota_n(y,t)) = \lambda 2^{-m_n} \sum_{i\in \N} 2^i |s_i -t_i| = \frac{\lambda}{\lambda_n} (d((x,s),(y,t)) - d(x,y)),
        \end{equation*}
        so that $\iota_n$ is $\lambda/\lambda_n$-Lipschitz and
        \begin{equation}\label{eq:iota-close-to-isometry}
                d(\iota_n((x,s)),\iota_n(y,t)) \geq \frac{\lambda}{\lambda_n} d((x,s),(y,t)) - 2d(a,b_n)
        \end{equation}
        for each $(x,s),(y,t)\in X$.
        By subtracting $\iota_n(a)$ from $\iota_n$, we may suppose that $\iota_n(a)=0$ for each $n\in\N$.

        For each $j\in \N$, $\B(a,2^j)$ is compact and
        \begin{equation*}
                \iota_n(\B(a,2^j)) \subset \B(0,L2^j/L_n) \subset \B(0,2^{m+1}),
        \end{equation*}
        which is also compact.
        Therefore, by Arzelà-Ascoli, after possibly taking a subsequence, there exists $\iota\colon \B(a, 2^j) \to \B(0,2^j)$ such that $\iota_n\to \iota$ uniformly.
        Necessarily, since $\lambda/\lambda_n \to 1$, $\iota$ is 1-Lipschitz, and since $d(a,b_n)\to 0$, \cref{eq:iota-close-to-isometry} shows that $\iota$ is an isometry onto its image.
        Further, since $\B(a,2^j)$ contains
        \begin{equation*}
                \{(a,s): s = (\ldots, \pm 1,\pm 1, 0, 0, \ldots)\},
        \end{equation*}
        where the $\pm 1$ end at the $j-1$ coordinate,
        $\iota_n(\B(a,2^j))$ contains
        \begin{equation*}
                N_n=\{s: s=(\ldots, 0, 0, \pm 1, \pm 1, \ldots, \pm 1, 0, 0, \dots)\},
        \end{equation*}
        where the $\pm 1$ begin at the $-m_n +1$ coordinate and end at the $j-1$ coordinate.
        Since the $N_n$ form an increasing sequence of $2^{-m_n+1}$ nets of $\B(0,2^{j-1})$ and since $\iota(\B(0,2^j)$ is closed, $\iota(\B(a,2^j))$ contains $\B(0,2^{j-1})$.

        Finally, taking a convergent diagonal subsequence of such $\iota$ over $j\to \infty$ gives the required isometry.
\end{proof}

\begin{proof}[Proof of \Cref{thm:main-uniform}]
        Let $(X,d,\mu)$ be uniform.
        If $X$ is connected, then by \Cref{thm:uniform-connected-is-R}, $X$ is isomorphic to $\R$.
        Second, if $X$ is disconnected, then there exists an $x\in X$ and $r>0$ such that $\B(x,r)$ is disconnected and so by \Cref{prop:existence-of-pairs} there exists a Besicovitch pair in $X$.
        If
        \begin{equation*}
                \inf\{d(a,b): a,b \text{ a Besicovitch pair in } X\}>0
        \end{equation*}
        then by \Cref{thm:bounded-connected-is-T} $X$ is isometric to a scaled copy of $\mathcal T$.
        On the other hand, if
        \begin{equation*}
                \inf\{d(a,b): a,b \text{ a Besicovitch pair in } X\}=0,
        \end{equation*}
        then $X$ is isometric to a scaled copy of $\mathcal S$ by \Cref{thm:totally-disconnected-is-S}.
\end{proof}

\section{Tangents, rectifiability and 1-regular metric measure spaces}
\label{sec:tangents}

In this section we prove \Cref{regular_tan}.
By the results in the previous sections, we will see that it suffices to show that $\T$ does not belong to $\Tan(\mu,x)$ for almost every $x$.
This approach is similar in spirit to the one of Preiss \cite{preiss}, but is substantially simpler due to our complete characterisation of 1-uniform metric measure spaces.

\begin{lemma}
	\label{scaling-dGHs}
	Let $(X,d,\mu,x)$ be regular, $\delta>0$ and $\lambda\geq 1$.
	For $\mu$-a.e. $x\in X$ there exists $r_1>0$ such that, for any $0<r<r_1$, if $(Y,\rho,\nu,y) \in \U$ and
	\begin{equation*}
		\dGHs(T_r(X,d,\mu,x),(Y,\rho,\nu,y)) < \delta,
	\end{equation*}
	then
	\begin{align*}
		\dGHs(T_{\lambda r}(X,d,\mu,x),T_{\lambda}(Y,\rho,\nu,y))
		< \lambda \delta.
	\end{align*}
\end{lemma}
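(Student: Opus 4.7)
The plan is to view the lemma as a Lipschitz-type continuity estimate for the rescaling operator $T_\lambda$. The first observation, immediate from the definitions of $T_r$ and $T_\lambda$, is the composition identity
\[T_{\lambda r}(X,d,\mu,x) = T_\lambda\!\bigl(T_r(X,d,\mu,x)\bigr).\]
Writing $A := T_r(X,d,\mu,x)$ and $B := (Y,\rho,\nu,y) \in \U$, the task reduces to showing $\dGHs(T_\lambda A, T_\lambda B) < \lambda\delta$ whenever $\dGHs(A,B) < \delta$. The case $\lambda = 1$ is trivial: uniformity of $B$ gives $T_1 B = B$ and by construction $T_1 A = A$, so the conclusion is the hypothesis itself. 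I would therefore focus on $\lambda > 1$.

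From the hypothesis I would extract a witness: some $\epsilon \in (0,\delta)$, a pointed metric space $(Z,z)$, and isometric embeddings of $A$ and $B$ into $Z$ sharing the base point $z$, with $\dKR^{1/\epsilon,1/\epsilon}_z(\mu_A,\nu_B) < \epsilon$, where $\mu_A,\nu_B$ denote the pushforward measures. Since $\dKR^{L,r}$ is monotone in $L$ and $r$, I may enlarge $\epsilon$ to assume $\epsilon \in [\delta/2,\delta)$. This ensures that the supports of all admissible test functions are contained in balls of a universal radius $2/\delta$, a fact that will ultimately make the cutoff $r_1$ independent of the particular $B$.

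The core calculation is then to rescale the metric on $Z$ by $1/\lambda$: the same embeddings now isometrically embed $T_\lambda A = T_{\lambda r}(X,d,\mu,x)$ and $T_\lambda B$ into $(Z, d_Z/\lambda)$, with corresponding pushforward measures $\mu_A/\alpha$ and $\nu_B/\lambda$, where $\alpha := \mu_A(B_A(x,\lambda)) = \mu(B(x,\lambda r))/\mu(B(x,r))$ and $\nu_B(B_B(y,\lambda)) = \lambda$ by uniformity of $B$. Setting $\epsilon'' := \lambda(\delta+\epsilon)/2 \in (\lambda\epsilon,\lambda\delta)$, any test function $g$ admissible for $\dKR^{1/\epsilon'',1/\epsilon''}$ in the rescaled $Z$ pulls back to an admissible test function for $\dKR^{1/\epsilon,1/\epsilon}$ on the unscaled $Z$. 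Splitting
\[\int g\, d(\mu_A/\alpha - \nu_B/\lambda) = \frac{1}{\lambda}\int g\, d(\mu_A - \nu_B) + \Bigl(\frac{1}{\alpha}-\frac{1}{\lambda}\Bigr)\int g\, d\mu_A\]
bounds the first term by $\epsilon/\lambda$ and the second by $|\alpha^{-1}-\lambda^{-1}|\cdot\mu_A(B_A(x,2/\delta))$.

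Regularity of $\mu$ at $x$ will close the estimate: existence and positivity of $\th(\mu,x)$ imply that $\mu(B(x,sr))/\mu(B(x,r)) \to s$ as $r\to 0$, uniformly in $s$ on any bounded interval. Hence $\alpha \to \lambda$ and $\mu_A(B_A(x,2/\delta))$ remains bounded, so the error term is $o_{r\to 0}(1)$. Because $\lambda > 1$ and $\epsilon \geq \delta/2$, the slack $\epsilon''-\epsilon/\lambda$ is bounded below by a strictly positive constant depending only on $\delta$ and $\lambda$, so for all $r$ below some $r_1$ the sum of the two bounds is strictly less than $\epsilon''$. This yields $\dGHs(T_\lambda A, T_\lambda B) \leq \epsilon'' < \lambda\delta$. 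The main technical hurdle is precisely the uniformity of $r_1$ over $B \in \U$, which is the reason for the initial monotonicity step enlarging $\epsilon$ into $[\delta/2,\delta)$: without it, the support radius of the admissible test functions, and thus the required slack, could not be controlled by $\delta$ and $\lambda$ alone.
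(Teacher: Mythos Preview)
Your proposal is correct and follows essentially the same route as the paper: embed $T_rX$ and $Y$ into a common pointed space $(Z,z)$, rescale the metric on $Z$ by $1/\lambda$, then split the resulting $\dKR$-integral into the main term $\tfrac{1}{\lambda}\int g\,d(\mu_A-\nu_B)$ and a normalisation error governed by $|\alpha^{-1}-\lambda^{-1}|$, which is controlled by regularity of $\mu$ at $x$. The only cosmetic difference is bookkeeping: the paper works directly with the endpoint $\dKR^{1/\delta,1/\delta}<\delta$ and targets $\dKR^{1/(\lambda\delta),1/(\lambda\delta)}<\lambda\delta$, whereas you retain the witness $\epsilon\in[\delta/2,\delta)$ and target an intermediate $\epsilon''=\lambda(\delta+\epsilon)/2$; both yield the same slack of order $\delta(\lambda-1/\lambda)$ and hence the same $r_1$.
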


\begin{proof}
	Fix $\epsilon>0$ to be chosen later.
	For $\mu$-a.e. $x\in X$, there exist $\theta,r_0>0$ such that
	\[(1-\epsilon)\theta r < \mu(B(x,r)) < (1+\epsilon)\theta r\]
	for all $0<r<r_0$.
	Fix such an $x$.

	Let $(Z,\zeta)$ be a metric space and $z\in Z$ such that there exist isometric embeddings of $T_r X,Y$ into $Z$, mapping $x$ and $y$ to $z$ such that
	\begin{equation*}
		\dKR_z^{\frac{1}{\delta},\frac{1}{\delta}}
		\left(
		\frac{\mu}{\mu(B(x,r))},
		\nu
		\right)
		< \delta.
	\end{equation*}
	Consider $\widetilde Z:=(Z,\zeta/\lambda)$.
	Then, for any $L,\rho>0$,
	\begin{equation*}
		\widetilde\dKR_z^{\frac{L}{\lambda},\frac{\rho}{\lambda}} 
		\leq
		\widetilde\dKR_z^{\lambda L,\frac{\rho}{\lambda}}
		=
		\dKR_z^{L,\rho},
	\end{equation*}
	where a tilde denotes a quantity measured in $\widetilde Z$.
	In particular,
	\begin{equation*}
		\widetilde\dKR_z^{\frac{1}{\lambda\delta},\frac{1}{\lambda\delta}}
		\left(
		\frac{\mu}{\mu(B(x,r))},
		\nu
		\right)
		\leq
		\dKR_z^{\frac{1}{\delta},\frac{1}{\delta}}
		\left(
		\frac{\mu}{\mu(B(x,r))},
		\nu
		\right)
		< \delta
	\end{equation*}
	and so
	\begin{align*}
		\widetilde \dKR_z^{\frac{1}{\lambda \delta}\frac{1}{\lambda \delta}}\left(
			\frac{\mu}{\mu(B(x,\lambda r))}
			,
		\frac{\nu}{\nu(B(y,\lambda))}
		\right)
		&\leq
		\frac{1}{\lambda}\widetilde \dKR_z^{\frac{1}{\lambda \delta}\frac{1}{\lambda \delta}}\left(
			\frac{\mu}{\mu(B(x,r))}
			,
		\nu
		\right)
		\\&\quad +
		\widetilde \dKR_z^{\frac{1}{\lambda \delta}\frac{1}{\lambda \delta}}\left(
			\frac{\mu}{\mu(B(x,\lambda r))}
			,
		\frac{\mu}{\lambda\mu(B(x,r))}
		\right)
		\\&<
		\frac{\delta}{\lambda}
		\\&\quad + 
		\frac{
			|\lambda \mu(B(x,r))-\mu(B(x,\lambda r))|
		}{
			\lambda \mu(B(x,\lambda r))\mu(B(x,r))
		}
		\mu(B(x,(\lambda\delta)^{-1}r)).
	\end{align*}
	Provided we pick $r_1< (\lambda \delta) r_0$ and require $0<r<r_1$, this quantity is bounded above by
	\begin{equation*}
		\frac{\delta}{\lambda}
		+
		\frac{
			2\lambda \theta r \epsilon
		}{
			(1-\epsilon)^2 \lambda^2 \theta^2 r^2
		}
		\frac{(1+\epsilon)\theta r}{\lambda \delta}.
	\end{equation*}
	Provided $\epsilon>0$ is sufficiently small, depending only upon $\delta$ and $\lambda$, this quantity is bounded above by
	\begin{equation*}
		\frac{\delta}{\lambda} + \frac{3\epsilon}{\lambda^2 \delta} < \lambda \delta.
	\end{equation*}
	The conclusion follows.
\end{proof}

We now prove estimates on the value of $\dGHs$ between elements of $\U$.
This is the only part in our argument that depends on the exact form of $\dKR$ in the definition of $\dGHs$.
It is possible to formulate a proof of \cref{regular_tan} that is independent of this choice, but it is far more involved.
\begin{lemma}
	\label{d_T_to_R}
	For any $r>0$,
	\begin{equation}
		\label{d_T_to_R-eq}
		\min \left\{\frac{r}{4}, \frac{1}{2} \right\} \leq \dGHs(T_r\T, \R) \leq 2r.
	\end{equation}
	and $\dGHs(T_r\S,\R) \geq 1/8$.
\end{lemma}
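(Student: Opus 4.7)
The inequality splits into three parts, which I address separately.

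\textbf{Upper bound $\dGHs(T_r\T,\R)\le 2r$.} For $r\ge 1/2$ this is immediate from $\dGHs\le 1/2$. For $r<1/2$ and any $\epsilon\in(r,1/2)$, the open ball $B_{T_r\T}(x_0,1/\epsilon)$ about the basepoint lies entirely inside the base circle (the nearest other circle being at distance $2/r>1/\epsilon$) and is a strict open arc of length $2/\epsilon$, hence canonically isometric to $B_\R(0,1/\epsilon)=(-1/\epsilon,1/\epsilon)$ with matching uniform measure. Constructing the common space $Z$ by the standard isometric gluing of $T_r\T$ and $\R$ along this shared ball yields $\dKR^{1/\epsilon,1/\epsilon}_z(\mu_{T_r\T},\mu_\R)=0<\epsilon$, so $\dKR_z\le r<2r$.

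\textbf{Lower bound $\dGHs(T_r\T,\R)\ge\min(r/4,1/2)$.} The key obstruction is topological: the base circle in $T_r\T$ has a unique point $p^*$ at distance $1/r$ from the basepoint, whereas $\R$ has two such points $\pm 1/r$ with $d_\R(1/r,-1/r)=2/r$. In any isometric embedding into a common $(Z,z)$, the triangle inequality forces $d_Z(p^*,1/r)+d_Z(p^*,-1/r)\ge 2/r$, so without loss of generality $d_Z(p^*,-1/r)\ge 1/r$. For $\epsilon<\min(r/4,1/2)$, I would take $g$ on $Z$ to be a sum of two bump functions centred at $\pm 1/r$, each of height $1$ and $1/\epsilon$-Lipschitz, with $\spt g\subset B(z,1/\epsilon)$. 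The integral $\int g\,\mathrm d\mu_\R$ is at least $\approx 2\epsilon$ (two disjoint bumps on the line), while $\int g\,\mathrm d\mu_{T_r\T}\le\epsilon$: by the WLOG assumption together with further triangle inequalities, any $w\in T_r\T$ close to $-1/r$ in $Z$ would have to lie near $p^*$ along the base circle, incompatible with being close to $-1/r$ for small enough $\epsilon$; meanwhile points on the auxiliary circles of $T_r\T$ lie at distance $\ge 2/r$ from $z$ and so cannot reach $-1/r$ either. This yields $\dKR^{1/\epsilon,1/\epsilon}\ge\epsilon$ and hence $\dGHs\ge\min(r/4,1/2)$.

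\textbf{Lower bound $\dGHs(T_r\S,\R)\ge 1/8$.} The strategy is structurally similar, but replaces antipodes with Besicovitch pairs, which exist in $T_r\S$ at every scale. Fix a Besicovitch pair $(z,b)$ in $T_r\S$ with $d(z,b)$ of order $1$, so that $B_{T_r\S}(z,d(z,b))\sqcup B_{T_r\S}(b,d(z,b))=B_{T_r\S}(z,2d(z,b))$ (each of mass $d(z,b)$), while $B_\R(0,2d(z,b))$ is a connected interval. A suitably tuned bump at $b$ then has substantial integral against $\mu_{T_r\S}$ but a strictly smaller integral against $\mu_\R$, because the constraint $d_Z(b,t)\ge|d(z,b)-|t||$ for $t\in\R$ forces the bump's $\R$-image to spread over a wider region. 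Optimizing the bump's width gives the uniform constant $1/8$.

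The main obstacle is the lower bound for $T_r\T$ at intermediate values $r\in[1,2]$: the clean triangle-inequality analysis works when $r$ is small (where the auxiliary circles lie far from $\pm 1/r$ in $Z$) or when $r\ge 2$ (where the bound $1/2$ drops out directly from the geometry), but the uniform estimate $r/4$ across all $r<2$ requires a finer packing argument controlling the possible mass of the neighbouring circles of $T_r\T$ near $\pm 1/r$ in $Z$.
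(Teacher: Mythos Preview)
Your upper bound is essentially the paper's argument. The lower bounds, however, take a genuinely different route from the paper, and the route you have chosen is the source of your acknowledged difficulty.

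The paper does not use antipodal points or Besicovitch pairs at all. Instead it proves a single unified claim: if a $1$-uniform space $(Y,\nu,y)$ contains a set $Q\ni y$ of diameter $r$ that is \emph{isolated} in the sense that $\{w\in Y:\rho(w,Q)<r\}\subset Q$, then $\dGHs(\R,Y)\ge\min\{r/4,1/2\}$. The test function used is an annular bump around $Q$ in the ambient space $Z$: it vanishes on $Q$ and outside $B(Q,2)$ (after rescaling to $r=1$) and equals $1$ on $B(Q,1.75)\setminus B(Q,1.25)$. The point is that such a $g$ integrates to zero against $\nu$ near $Q$, whilst $\R$, being connected, must have points at every distance from the basepoint and so picks up at least unit mass on the annulus. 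Both lower bounds then follow by exhibiting such a $Q$: for $T_r\T$ one takes the entire base circle, and for $T_r\S$ one takes a dyadic ball whose radius lies in $[1/2,1)$.

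Your approach instead tries to detect the discrepancy via pointwise bumps placed at specific locations ($\pm 1/r$, or at the partner $b$ of a Besicovitch pair). This forces you into a case analysis on where the other components of $Y$ might land in $Z$, and that is exactly where your argument for $T_r\T$ breaks down in the range $r\in[1,2]$: the auxiliary circles are no longer automatically far from your bump centres. The same issue is latent in your $\S$ argument: the sentence ``the constraint $d_Z(b,t)\ge |d(z,b)-|t||$ forces the bump's $\R$-image to spread over a wider region'' does not by itself give a strict inequality of integrals, since $\R$ could concentrate mass in both intervals $\{|t|\approx d(z,b)\}$ and match the $\S$-integral. You would need a second test function to rule that out, and the constants do not obviously come out to $1/8$.

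The fix is to abandon pointwise bumps in favour of the annular test function around an isolated set; this makes the argument uniform in $r$ and eliminates the intermediate-$r$ obstacle entirely.
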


\begin{proof}
	The ball $B(0,1/2r)\subset T_r\T$ is isometric to the interval $(-1/2r,1/2r)\subset \R$, each possessing the same measure, and so the right hand inequality of the first statement holds.

	In order to prove the other inequalities, we first show the following.
	Let $r>0$ and suppose that $(Y,\rho,\nu,y)$ is 1-uniform and $Q\subset Y$ has diameter $r$, contains $y$ and satisfies
	\[\{x\in Y: \rho(x,Q)<r\} \subset Q.\]
	We claim that $\dGHs(\R,Y) \geq \min\{r/4,1/2\}$.

	To this end, suppose that $\R$ and $Y$ are isometrically embedded into some metric space $Z$, with each distinguished point mapped to the same point $0$.
	Note that, since each $\dKR_0^{L,\rho}$ scale linearly for (1-)uniform measures, it suffices to show that, in the case $r=1$, $\dKR_0^{4,4}(\H|_Y,\H|_R) \geq 1/4$.

	Let $g\colon Z\to [-1,1]$ be the $4$-Lipschitz function satisfying
	\[ g(z) = \begin{cases} 0 & z\in Q\\
		0 & z\not\in B(Q,2) \\
		1 & z\in B(Q,1.75)\setminus B(Q,1.25)
	\end{cases}.\]
	Then $\spt g\subset B(0,3)$ and so
	\begin{align*}
		\dKR_0^{4,3}(\H|_{Y},\H|_{\R}) &\geq \int_{B(0,3)} g\d(\H|_{\R}-\H|_{Y})
		\\ &\geq \int_{B(Q,1.75)\setminus B(Q,1.25)} 1 \d\H|_{\R} \geq 1.
	\end{align*}
	Consequently,
	\begin{equation*}
		\dKR_0^{4,4}(\H|_{Y},\H|_{\R}) \geq 1/4,
	\end{equation*}
	proving the claim.

	The first inequality of the conclusion now follows by applying the claim to $Q= C(0,r) \subset T_r \T$.
	For the second part, let $r>0$ and let $i\in\Z$ be such that $2^{-(i+1)} \leq r < 2^{-i}$.
	Applying the claim to $Q= C(0,2^{i}r) \subset T_r \S$ gives $\dGHs(T_r\S,\R) \geq 2^{i}r/4 \geq 1/8$.
\end{proof}

We nom prove a key geometric property.
It will be used to show that, if $\R$ is a tangent space, then it is the only tangent space.
\begin{proposition}
	\label{flat_at_inf}
	Let $(X,d,\mu)$ be regular, $\lambda \geq 1$ and $0<\epsilon<1/24$ and suppose that $x\in X$ satisfies the conclusion of \Cref{scaling-dGHs}.
	Suppose also that, for $0<r<r_1$, 
	\begin{equation}
		\label{p-close-R}
		\dGHs(T_r(X,d,\mu,x),\R) < \epsilon
	\end{equation}
	and, for some $U\in\U$,
	\begin{equation}
		\label{p-close-T}
		\dGHs(T_{r/\lambda}(X,d,\mu,x),U) < \frac{\epsilon}{\lambda}.
	\end{equation}
	Then
	\begin{equation*}
		\dGHs(T_{r/\lambda} (X,d,\mu,x),\R) < \frac{25}{\lambda}\epsilon.
	\end{equation*}
\end{proposition}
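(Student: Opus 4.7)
The plan is to use Lemma \ref{scaling-dGHs} to transport the $\epsilon/\lambda$ bound at scale $r/\lambda$ up to a bound of $\epsilon$ at scale $r$, then combine with the $\epsilon$ bound at scale $r$ to force $T_\lambda U$ to be within $2\epsilon$ of $\R$; the classification of $\U$ together with the quantitative estimates of Lemma \ref{d_T_to_R} will then rule out all choices of $U$ except $\R$ and a very fine scaling of $\T$.

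First I would apply Lemma \ref{scaling-dGHs} with scale factor $\lambda$ to hypothesis \eqref{p-close-T}, obtaining $\dGHs(T_r(X,d,\mu,x), T_\lambda U) < \lambda \cdot \epsilon/\lambda = \epsilon$. The triangle inequality with \eqref{p-close-R} will then yield $\dGHs(T_\lambda U, \R) < 2\epsilon$. Since $T_\lambda$ acts on $\U$ by a further rescaling of the underlying metric---explicitly, $T_\lambda \R = \R$, $T_\lambda(T_s \T) = T_{\lambda s}\T$, and $T_\lambda(T_s \S) = T_{\lambda s}\S$---I would then split into cases based on the classification of $\U$.

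If $U = \R$, the conclusion is immediate from \eqref{p-close-T}, since $\epsilon/\lambda < 25\epsilon/\lambda$. If $U = T_s \S$ for some $s > 0$, Lemma \ref{d_T_to_R} gives $\dGHs(T_{\lambda s}\S, \R) \geq 1/8$, which contradicts the $2\epsilon < 1/12$ bound obtained above (using $\epsilon < 1/24$); this case is thus excluded. In the remaining case $U = T_s \T$, the lower bound $\min\{\lambda s/4,\,1/2\} \leq \dGHs(T_{\lambda s}\T,\R) < 2\epsilon$, together with $2\epsilon < 1/2$, will force $\lambda s < 8\epsilon$, i.e.\ $s < 8\epsilon/\lambda$. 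The upper bound in Lemma \ref{d_T_to_R} then gives $\dGHs(T_s \T, \R) \leq 2s < 16\epsilon/\lambda$, and a final application of the triangle inequality with \eqref{p-close-T} yields
\[\dGHs(T_{r/\lambda}(X,d,\mu,x),\R) < \frac{\epsilon}{\lambda} + \frac{16\epsilon}{\lambda} = \frac{17\epsilon}{\lambda} < \frac{25\epsilon}{\lambda}.\]

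No step is particularly delicate; the crux is the interplay between the scaling lemma and the fact, captured by Lemma \ref{d_T_to_R}, that any non-flat member of $\U$ is uniformly separated from $\R$ in $\dGHs$ (modulo the rescaling of $\T$, whose distance to $\R$ vanishes linearly as the scale shrinks). The constant $25$ in the statement has slack---this argument produces the sharper value $17$.
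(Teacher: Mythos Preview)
Your proof is correct and follows essentially the same route as the paper's: apply Lemma~\ref{scaling-dGHs} to \eqref{p-close-T}, use the triangle inequality with \eqref{p-close-R} to bound $\dGHs(T_\lambda U,\R)$, then invoke Lemma~\ref{d_T_to_R} to eliminate the $\S$ case and force the $\T$ scale to be small. The only difference is bookkeeping: the paper records $\dGHs(\R,T_\lambda U)<3\epsilon$ rather than the $2\epsilon$ you (correctly) obtain from the triangle inequality, which is why the paper arrives at the constant $25$ while your argument yields $17$.
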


\begin{proof}
	Note that, for any $r,r'>0$, $T_rT_{r'}=T_{rr'}=T_{r'r}$.
	Therefore, \eqref{p-close-T} and \Cref{scaling-dGHs} imply that
	\begin{equation*}
		\dGHs(T_r(X,d,\mu,x),T_{\lambda} U) < \epsilon.
	\end{equation*}
	Hence \eqref{p-close-R} and the triangle inequality imply
	\begin{equation}
		\label{U_close_R}
		\dGHs(\R,T_\lambda U) < 3\epsilon < \frac{1}{8}.
	\end{equation}
	Hence the second conclusion of \Cref{d_T_to_R} implies that $U=T_{\rho} \T$ for some $\rho>0$ or $U=\R$.

	If the latter holds then \eqref{p-close-T} directly implies the conclusion.
	If the former holds then \eqref{U_close_R} becomes
	\begin{equation*}
		\dGHs(\R,T_{\lambda \rho} \T) < 3\epsilon
	\end{equation*}
	and hence \eqref{d_T_to_R-eq} implies that
	\begin{equation*}
		\frac{\lambda \rho}{4} \leq \dGHs(\R,T_{\lambda \rho} \T) < 3\epsilon.
	\end{equation*}
	Thus \eqref{d_T_to_R-eq} also implies
	\begin{equation*}
		\dGHs(U,\R) = \dGHs(T_\rho\T,\R) < \frac{24}{\lambda} \epsilon.
	\end{equation*}
	Therefore, \eqref{p-close-T} and the triangle inequality gives
	\begin{align*}
		\dGHs(T_{r/\lambda}(X,d,\mu,x),\R) &\leq 	\dGHs(T_{r/\lambda} (X,d,\mu,x),U) + \dGHs(U,\R)\\
		&< \frac{1}{\lambda}\epsilon + \frac{24}{\lambda}\epsilon.
	\end{align*}
\end{proof}

\begin{corollary}
	\label{tan_is_R}
	If $(X,d,\mu)$ is regular then for $\mu$-a.e. $x\in X$, either $\R\not\in\Tan(\mu,x)$ or $\Tan(\mu,x)=\{\R\}$.
\end{corollary}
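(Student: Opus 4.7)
The proof is by contradiction. Assume $\mathbb{R}\in\Tan(\mu,x)$ and $\Tan(\mu,x)\neq\{\mathbb{R}\}$.

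\emph{Reduction via connectedness.} Since tangent spaces of a regular measure are $1$-uniform by \Cref{tan-of-unif}, \Cref{thm:main-uniform} gives $\Tan(\mu,x)\subset\mathcal{U}$. The lower bound $\dGHs(T_\sigma\mathcal{S},\mathbb{R})\geq 1/8$ from \Cref{d_T_to_R}, combined with $\dGHs(T_\rho\mathcal{T},\mathbb{R})\leq 2\rho\to 0$ as $\rho\to 0$, shows that the connected component of $\mathbb{R}$ in $\mathcal{U}$ is exactly $\{T_\rho\mathcal{T}:\rho\geq 0\}$ (with the convention $T_0\mathcal{T}:=\mathbb{R}$). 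By \Cref{tan_is_con} $\Tan(\mu,x)$ is connected and compact, so it has the form $\{T_\rho\mathcal{T}:\rho\in[0,b]\}$ for some $b\geq 0$. The conclusion is thus equivalent to showing $b=0$.

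\emph{Applying \Cref{flat_at_inf}.} Assume for contradiction that $b>0$. Set $\eta=\min(b,1)$ and $V=T_\eta\mathcal{T}\in\Tan(\mu,x)$, so $\dGHs(V,\mathbb{R})\geq \eta/4$ by \Cref{d_T_to_R}. Fix $\epsilon\in(0,1/24)$. Choose $r_0$ below the threshold of \Cref{scaling-dGHs} with $\dGHs(T_{r_0}(\mu,x),\mathbb{R})<\epsilon$, which exists because $\mathbb{R}\in\Tan(\mu,x)$. Take a sequence $s_n\to 0$ with $T_{s_n}(\mu,x)\to V$ and set $\lambda_n=r_0/s_n\to\infty$. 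The plan is to apply \Cref{flat_at_inf} with $r=r_0$, $\lambda=\lambda_n$ and $U=V\in\mathcal{U}$: the first hypothesis $\dGHs(T_{r_0},\mathbb{R})<\epsilon$ is satisfied, so if the second hypothesis $\dGHs(T_{s_n},V)<\epsilon/\lambda_n$ also holds for infinitely many $n$, then the conclusion gives $\dGHs(T_{s_n},\mathbb{R})<25\epsilon/\lambda_n\to 0$, contradicting $\dGHs(T_{s_n},\mathbb{R})\to\dGHs(V,\mathbb{R})\geq \eta/4>0$.

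\emph{The main obstacle and its resolution.} The hard part is verifying the second hypothesis $\dGHs(T_{s_n},V)<\epsilon s_n/r_0$, a quantitative rate of convergence that $T_{s_n}\to V$ does not by itself provide. I would resolve this using the full strength of \Cref{tan_is_con}: for any $\delta>0$ it produces scales $r_1',r_2'$ with $T_{r_1'}$ within $\delta$ of $\mathbb{R}$, $T_{r_2'}$ within $\delta$ of $V$, and the entire path $r\mapsto T_r$ between them staying within $\delta$ of $\Tan(\mu,x)$. By iterating with $\delta_k\to 0$, and exploiting the continuity of $r\mapsto T_r$ from \Cref{scale-cts} so that both $\{r:\dGHs(T_r,\mathbb{R})<\epsilon\}$ and $\{r:\dGHs(T_r,V)<\delta_k\}$ are open sets having $0$ as an accumulation point, one can pick pairs $(r_0,s)$ with $r_0$ in the first set, $s$ in the second, and the ratio $\lambda=r_0/s$ as large as needed so that $\delta_k<\epsilon/\lambda$. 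This selection closes the contradiction and is where the technical heart of the argument lies.
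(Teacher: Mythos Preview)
Your approach diverges from the paper's and, in its present form, contains a genuine gap.

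\textbf{What the paper does.} The paper never lets $\lambda\to\infty$ and never fixes $U=V$. Instead it uses \Cref{tangents_exist} to get, for all sufficiently small $r$,
\[
\dGHs(T_r(\mu,x),\U)<\epsilon/25,
\]
so that at \emph{every} small scale the second hypothesis \eqref{p-close-T} of \Cref{flat_at_inf} is satisfied with some $U\in\U$ (depending on $r$). Starting from a single scale $r_0$ with $\dGHs(T_{r_0},\R)<\epsilon$, one applies \Cref{flat_at_inf} with the \emph{fixed} value $\lambda=25$ to obtain $\dGHs(T_{r_0/25},\R)<\epsilon$, and then iterates to get $\dGHs(T_{25^{-i}r_0},\R)<\epsilon$ for all $i$. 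A final application of \Cref{scaling-dGHs} fills in the intermediate scales. Since $\epsilon$ was arbitrary, $T_r(\mu,x)\to\R$.

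\textbf{The gap in your argument.} Your ``resolution'' paragraph does not close. You need pairs $(r_0,s)$ with $T_{r_0}$ within $\epsilon$ of $\R$, $T_s$ within $\delta_k$ of $V$, and $\lambda=r_0/s$ satisfying both $\delta_k<\epsilon/\lambda$ (so that \eqref{p-close-T} holds) and $\lambda>100\epsilon/\eta$ (so that the conclusion $25\epsilon/\lambda<\eta/4$ yields a contradiction). Neither \Cref{tan_is_con} nor \Cref{scale-cts} gives any control on the ratio $r_0/s$: they produce scales near $\R$ and scales near $V$, but not pairs with a prescribed ratio. In addition, the threshold $r_1$ in \Cref{scaling-dGHs} (and hence the ``$x$ satisfies the conclusion of \Cref{scaling-dGHs}'' clause in \Cref{flat_at_inf}) depends on $\lambda$, so letting $\lambda_n\to\infty$ forces you to re-choose $r_0$ below an ever-shrinking threshold, compounding the circularity.

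The missing idea is precisely that \eqref{p-close-T} does not require proximity to a \emph{fixed} $U$: any nearby element of $\U$ will do, and this is free from \Cref{tangents_exist}. Once you see that, the fixed-$\lambda$ iteration replaces your attempted large-$\lambda$ single shot, and the connectedness reduction becomes unnecessary.
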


\begin{proof}
	Suppose that $R\in \Tan(\mu,x)$ and let $0<\epsilon<1/24$.
	By \Cref{tan-of-unif} and \Cref{tangents_exist}, $\mu$-a.e. $x\in X$ satisfies
	\[\Tan(\mu,x)\subset \U\]
	and
	\[\dGHs(T_{r}(\mu,x),\Tan(\mu,x)) < \frac{\epsilon}{25}\]
	for all $0<r<r_x$, for some $r_x>0$.
	That is,
	\begin{equation}
		\label{close_to_unif}
		\dGHs(T_r(\mu,x),\U) < \frac{\epsilon}{25}
	\end{equation}
	for all $0<r<r_x$.
	Fix such an $x$ that also satisfies the conclusion of \Cref{scaling-dGHs}.

	By the definition of $\Tan(\mu,x)\ni \R$, there exists some $0<r_0<r_x$ such that
	\[\dGHs(T_{r_0}(\mu,x), \R) < \epsilon.\]
	\Cref{close_to_unif} implies that the hypotheses of \Cref{flat_at_inf} are satisfied for $\lambda=25$ and so we may deduce that
	\[\dGHs(T_{r_0/25}(\mu,x),\R) < \epsilon.\]
	Repeating inductively, we see that
	\[\dGHs(T_{25^{-i}r_0}(\mu,x),\R) < \epsilon\]
	for all $i\in\N$.
	Now let $0<r<r_0$ and let $i\in\N$ be such that $2^{-(i+1)}r_0 < r\leq 2^{-i} r_0$.
	Applying \Cref{scaling-dGHs} gives
	\begin{equation*}
		\dGHs(T_r(\mu,x),\R) \leq 25 \dGHs(T_{2^{-i} r_0}(\mu,x), \R) < 25\epsilon.
	\end{equation*}
	Since $\epsilon>0$ is arbitrary, this implies that $T_r(\mu,x) \to \R$ as $r\to 0$, as required.
\end{proof}

Finally we prove our main result on regular measures.
\begin{proof}[Proof of \Cref{regular_tan}].
	By \Cref{tan-of-unif}, for $\mu$-a.e. $x\in X$, $\Tan(\mu,x)\subset \U$.
	\Cref{tan_is_R} then implies, for $\mu$-a.e. $x\in X$, that
	\begin{equation}
		\label{final_eq}
		\Tan(\mu,x) \subset \{\R\} \cup \{T_r \S: r>0\}.
	\end{equation}
	Indeed, if not, then there exists a set $S\subset X$ of positive $\mu$ measure and, for each $x\in S$, an $r_x>0$ such that $T_{r_x} \T \in \Tan(\mu,x)$.
	Now, $\mu$-a.e. $x\in S$ satisfies the conclusion of \Cref{tan-of-tan} and \Cref{tan_is_R}.
	The former implies that $\R \in \Tan(\mu,x)$ and so the latter implies that $\Tan(\mu,x) =\{\R\}$, a contradiction.
	Equation \eqref{final_eq} and \Cref{tan_is_R} imply that we may decompose $X=E\cup S$ such that for $\mu$-a.e.\ $x\in E$, $\Tan(\mu,x)=\{\R\}$ and, for $\mu$-a.e.\ $x\in S$, $\Tan(\mu,x)=\{\lambda\mathcal S: \lambda>0\}$.

	Finally, \Cref{R_tan_rect} implies that $E$ is 1-rectifiable, whereas \Cref{tan_pu} and \Cref{tangent-density-equal} imply that $S$ is purely 1-unrectifiable.
\end{proof}

\section{Some solutions to the 1-dimensional Besicovitch problem}\label{sec:BP}
We now prove \Cref{solve_BP} by showing that $\S$ does not isometrically embed into any $X$ in the statement.
In fact we will show that, for sufficiently large $n$, the following metric space does not isometrically embed.
For $n\in\N$ let
\[
\S_n:=\{s_0,s_1,s_2,\ldots,s_n\}
\]
with the metric $\rho(0,i)=2^i$ and $\rho(i,j)=\rho(i,0)+\rho(0,j)$ for each $1\leq i\neq j\leq n$.

Once we have shown this, a compactness argument then implies that $\S_n$ does not $(1+\epsilon)$-bi-Lipschitz embed, for some $\epsilon>0$ (alternatively, this follows by adding quantitative details to each argument below).
Consequently, the set $S$ in \Cref{regular_tan} must have measure zero, completing the proof.

\subsection{Doubling geodesic metric spaces}
For $N\in\N$, a metric space $(X,d)$ is \emph{$N$-doubling} if any ball is covered by $N$ balls of half the radius.
A metric space is \emph{geodesic} if for any two points $x,y\in X$ there exists an arc-length parametrised $\gamma\colon [0,d(x,y)]\to X$ such that $\H(\gamma)=d(x,y)$.
For such a $\gamma$, $|s-t|=d(\gamma(s),\gamma(t))$ for each $s,t\in [0,d(x,y)]$.

Suppose that $X$ is geodesic and $N$-doubling and that, for some $n\in\N$, $\S_{n}$ isometrically embeds into $X$.
For each $1\leq i \leq n$ let $\gamma_i$ be a geodesic joining $s_0$ to $s_i$.
Then for any $1\leq i\neq j\leq n$ and $0\leq t_i \leq d(s_0,s_i)$,
\[d(\gamma_i(t_i),\gamma_j(t_j))= 2^{t_i}+2^{t_j}.\] 
In particular, $\gamma_i(2)\in B(s_0,2)$ and
\(d(\gamma_i(2),\gamma_j(2))=4\)
for each $1\leq i \neq j \leq n$.
Therefore, $B(s_0,2)$ is not contained within $n$ balls of radius $1$ and hence $n\leq N$.

\subsection{Uniformly convex Banach spaces}
A Banach space $(X,\|\cdot\|)$ is \emph{strictly convex} if, for every $x,y\in X$, $\|x+y\|=\|x\|+\|y\|$ implies that $x$ and $y$ are co-linear.
Observe that $S_3$ does not isometrically embed into a strictly convex Banach space.
Indeed, after translating so that $s_0=0$, such an embedding would require all $s_i$ to be co-linear, which violates the definition of $\rho$.

Further, $X$ is \emph{uniformly convex} if for every $0<\epsilon \leq 2$ there exists $\delta(\epsilon)>0$ such that $\|x-y\|\geq \epsilon$ implies $\|x+y\|\leq 2(1-\delta)$.
Uniform convexity is a quantitative notion of strict convexity.
Given a \emph{modulus of convexity} $\delta(\epsilon)$, the fact that $S_4$ does not isometrically embed into any (4-dimensional) strictly convex Banach space, there exists some $\eta>0$ such that $S_4$ does not $(1+\eta)$-bi-Lipschitz embed into a uniformly convex Banach space with modulus of convexity $\delta(\epsilon)$.

\subsection{Doubling metric spaces with dilations}

A metric space $(X,d)$ is equipped with \emph{dilations} if, for every $c\in X$ and $\lambda>0$, there exists a map $\delta_\lambda\colon X\to X$ with $f(c)=c$ such that
\[d(\delta_\lambda(x),\delta_\lambda(y))= \lambda d(x,y)\]
for all $x,y\in X$
and $\delta_{\lambda^-1}=\delta_{\lambda}^{-1}$.
Suppose also that there exist $0<\eta<1$ and $M\in\N$ such that, for any $x\in X$ and $m,n\geq M$,
\begin{equation}\label{growth}d(\delta_{2^m} x,\delta_{2^n} x) \leq \eta(2^m+2^n)d(c,x).\end{equation}

Now suppose that $X$ is $N$-doubling, let $l\in\N$ satisfy $0<2^{-l}<1-\eta$ and set $n=M+N^{l+1}$.
Suppose that $\S_n$ isometrically embeds into $X$ and let $\delta_\lambda$ be the dilations centred on $s_0$.
Cover $B(s_0,1)$ by balls $B_1,\ldots,B_{N^{l+1}}$ of diameter at most $2^{-l}$.
Then there exists some $B_i$ that contains two points $\delta_{2^{-m}}(s_m)$ and $\delta_{2^{-m}}(s_m)$ with $m,n\geq M$.
By the triangle inequality and the properties of dilations, in particular \eqref{growth},
\begin{align*}
       d(s_m,s_n)
       &=
       d(\delta_{2^m}(\delta_{2^{-m}}(s_m)),\delta_{2^n}(\delta_{2^{-n}}(s_n)))
       \\&\leq
       d(\delta_{2^m}(\delta_{2^{-m}}(s_m)),\delta_{2^m}(\delta_{2^{-n}}(s_n)))
       \\&\quad+
       d(\delta_{2^m}(\delta_{2^{-n}}(s_n)),\delta_{2^n}(\delta_{2^{-n}}(s_n)))
       \\&\leq 2^m 2^{-l} + \eta(2^m+2^n) \leq (\eta+2^{-l})(2^m+2^n).
\end{align*}
Since $\eta+2^{-l}<1$, this is impossible.

An example of a space with dilations satisfying \eqref{growth} is the \emph{Heisenberg group} $\mathbb{H}$ \cite[Definition 14.2.8]{zbMATH06397370}.
This is the set $\R^3$ equipped with the group law
\[(x,y,t)\star(x',y',t')=(x+x',y+y',z+z'+2(xy'-x'y)).\]
Note that 0 is the identity and $(x,y,t)^{-1}=(-x,-y,-t)$.
For $\lambda>0$, define
\[\delta_\lambda(x,y,t)=(\lambda x,\lambda y,\lambda^2 z).\]
A \emph{homogeneous norm} on $\mathbb{H}$ is a function $\|\cdot\|\colon \mathbb{H}\to\R$ such that $\|a^{-1}\|=\|a\|$,
\[\|a^{-1}\star b\| \leq \|a\|+\|b\|\]
and
\[\|\delta_\lambda(a)^{-1}\star \delta_\lambda(b)\|=\lambda\|a\star b^{-1}\|\]
for all $a,b\in \mathbb{H}$ and $\lambda>0$.
That is, $d(a,b)=\|a^{-1}\star b\|$ defines a metric with respect to which each $\delta_\lambda$ is a dilation centred on 0.
Since this metric is invariant under the group action, $\mathbb{H}$ has dilations centred on all points.
To see \eqref{growth}, let $(x,y,t)\in \mathbb{H}$ and $m,n\in\N$, and set $\lambda=2^m$ and $\mu=2^n$.
Then
\begin{align}
        \|\delta_\lambda(a)^{-1}\delta_\mu(a)\|
        &=
        \|(\mu-\lambda)x,(\mu-\lambda)y,(\mu^2-\lambda^2)t\|
        \notag
        \\&=
        |\mu-\lambda|\|x,y,\tfrac{\mu+\lambda}{\mu-\lambda}t\|
        \label{hberg}
        .
\end{align}
The topology with respect to $\|\cdot\|$ agrees with the Euclidean topology.
Since $|\tfrac{\mu+\lambda}{\mu-\lambda}|\leq 4$, there exists $C>0$ such that the final expression in \eqref{hberg} is at most $C|\mu-\lambda|$ for all $(x,y,t)\in B(0,1)$.

	\printbibliography
\end{document}